   \def\MR#1{}
\DeclareMathOperator{\GL}{GL}
\DeclareMathOperator{\GU}{GU}
\DeclareMathOperator{\diag}{diag}
\DeclareMathOperator{\Ad}{Ad}
\DeclareMathOperator{\Adm}{Adm}
\DeclareMathOperator{\Irr}{Irr}
\DeclareMathOperator{\pfn}{pfn}
\DeclareMathOperator{\op}{op}
\DeclareMathOperator{\supp}{supp}
\DeclareMathOperator{\LP}{LP}
\DeclareMathOperator{\DL}{DL}
\newcommand\F{\mathbb{F}}
\newcommand\aFq{\overline{\mathbb F}_q}
\newcommand\cO{\mathcal O}
\newcommand\Gm{\mathbb G_m}
\newcommand\Q{\mathbb Q}
\newcommand\A{\mathbb A}
\newcommand\G{\mathbb G}
\newcommand\J{\mathbb J}
\newcommand\Z{\mathbb Z}
\newcommand\tW{\tilde W}
\newcommand\SW{{^S\tilde W}}
\newcommand\SAdm{{^S\mathrm{Adm}}}
\newcommand\tS{\tilde S}
\newcommand\vp{\varpi}
\newcommand\Y{X_*(T)}
\newcommand\la{\langle}
\newcommand\ra{\rangle}
\theoremstyle{definition}
\newtheorem{theo}{Theorem}[section]
\newtheorem{prop}[theo]{Proposition}
\newtheorem{lemm}[theo]{Lemma}
\newtheorem{coro}[theo]{Corollary}
\newtheorem{exam}[theo]{Example}
\newtheorem{rema}[theo]{Remark}
\newtheorem{thm}{Theorem}[section]
\begin{document}
\title{On the Supersingular Locus of the $\GU(2,n-2)$ Shimura Variety}
\author{Ryosuke Shimada}
\date{}
\maketitle

\begin{abstract}
We study the supersingular locus of a reduction at an inert prime of the Shimura variety attached to $\GU(2,n-2)$.
More concretely, we decompose the supersingular locus into a disjoint union of iterated fibrations over (classical) Deligne-Lusztig varieties after taking perfection.
\end{abstract}

\section{Introduction}
\label{introduction}
Shimura varieties have been used, with great success, towards applications in number theory.
There are many such applications based on the study of integral models and their reductions.
It is known that in some cases, the supersingular (or basic) locus of the reduction of a Shimura variety admits a simple description.
For example, Vollaard-Wedhorn \cite{VW11} described the supersingular locus of the Shimura variety of $\GU(1,n-1)$ at an inert prime as a union of (classical) Deligne-Lusztig varieties.
Also in the $\GU(2,2)$-case, Howard-Pappas \cite{HP14} proved the existence of a similar description.
After \cite{VW11} and \cite{HP14}, G\"{o}rtz, He and Nie classified the cases where the supersingular locus is naturally a union of Deligne-Lusztig varieties, called the {\it fully Hodge-Newton decomposable} cases (cf.\ \cite{GH15}, \cite{GHN19}, \cite{GHN24}).
As a result, the Shimura variety of $\GU(2,n-2)$ is not fully Hodge-Newton decomposable if $n\geq 5$.
The studies by G\"{o}rtz, He and Nie are based on the fact that the study of the perfection of the supersingular locus can be reduced to a study of an affine Deligne-Lusztig variety via the Rapoport-Zink uniformization.
Such simple descriptions have been applied towards the Kudla-Rapoport program \cite{KR11}, Zhang's Arithmetic Fundamental Lemma \cite{Zhang12} and the Tate conjecture for certain Shimura varieties \cite{TX19}, \cite{HTX17}.

Recently, new simple descriptions have been discovered in some cases which are not fully Hodge-Newton decomposable (cf.\ \cite{Trentin23}, \cite{Shimada5}, \cite{ST24}).
In the $\GU(2,n-2)$-case, Fox-Imai \cite{FI21} (see \cite{FHI23} for a result before perfection) studied irreducible components of the supersingular locus using the Chen-Zhu conjecture, which is a theorem on irreducible components of affine Deligne-Lusztig varieties.
On the other hand, any stratification (i.e., a decomposition into disjoint locally closed subvarieties) as in the fully Hodge-Newton decomposable cases was not known in this case.
Hence the objective of this paper is to find an explicit stratification of the affine Deligne-Lusztig variety related to the Shimura variety of $\GU(2,n-2)$ in terms of Deligne-Lusztig varieties (see \cite[\S 11]{FI21} for the Rapoport-Zink uniformization in this case).

Let $F$ be a non-archimedean local field with finite residue field $\F_q$ of prime characteristic $p$, and let $L$ be the completion of the maximal unramified extension of $F$.
We write $\cO$ for the valuation ring of $L$.
To simplify the exposition, we assume that $F$ has mixed characteristic in the introduction.
Let $G$ be the unramified general unitary group of degree $n$ over $F$.
Let $\mu$ be the cocharacter of $G$ corresponding to $z\rightarrow (\diag(1,\ldots,1,z^{-1},z^{-1}),z^{-1})$ under an isomorphism $G_L\cong \GL_n\times \G_m$.
Let $X_\mu(b)$ denote the affine Deligne-Lusztig variety attached to $\mu$ and $b=(\diag(1,\ldots,1),\vp^{-1})$, where $\vp$ denotes a uniformizer of $F$.
Including $X_\mu(b)$, all varieties in the introduction are perfect schemes.
For example, Deligne-Lusztig varieties actually mean the perfection of them.

In the fully Hodge-Newton decomposable cases, the decomposition into Deligne-Lusztig varieties is a refinement of the {\it Ekedahl-Oort stratification} of affine Deligne-Lusztig varieties (see \S\ref{ADLV}).
This stratification itself exists in general even outside the fully Hodge-Newton decomposable cases.
It is the local analogue of the stratification defined in the global context of Shimura varieties in \cite{HR17}.
An Ekedahl-Oort stratum in $X_\mu(b)$ actually corresponds to the intersection of a global Ekedahl-Oort stratum with the supersingular locus (cf.\ \cite[\S2.5]{GHR20}).

Set $\vp^\mu=(\diag(1,\ldots,1,\vp^{-1},\vp^{-1}),\vp^{-1})$.
Then the Ekedahl-Oort stratification of $X_\mu(b)$ (and the corresponding global one) is parametrized by $$\SAdm(\mu)=\{w_{k,l}\coloneqq\vp^\mu (n-1\ \cdots\ k+1\ k)(n\ \cdots\ l+1\ l)\mid 1\le k<l\le n\},$$
which is a subset of the extended affine Weyl group.
Let us denote by $\pi(X_{w_{k,l}}(b))$ the Ekedahl-Oort stratum corresponding to $w_{k,l}$.
See \S\ref{ADLV} for the precise definition.
Let $\SAdm(\mu)_{\neq \emptyset}\coloneqq\{w_{k,l}\in \SAdm(\mu)\mid \pi(X_{w_{k,l}}(b))\neq \emptyset\}$.
Let $\SAdm(\mu)_{\DL}\subseteq \SAdm(\mu)_{\neq \emptyset}$ denote a certain subset consisting of all $w_{k,l}$ such that $\pi(X_{w_{k,l}}(b))$ is naturally a union of Deligne-Lusztig varieties (see Proposition \ref{spherical} and \S\ref{setting}).
The equality holds if and only if $n\le 4$.
It is also known by G\"{o}rtz-He-Nie that the global Ekedahl-Oort stratum corresponding to $w_{k,l}\in\SAdm(\mu)$ is contained in the supersingular locus if and only if $w_{k,l}\in\SAdm(\mu)_{\DL}$ (cf.  \cite[Proposition 5.6]{GHN19}, \cite[Proposition 4.2]{Wang21}).
Let $\SAdm(\mu)_{\neq\DL}$ denote $\SAdm(\mu)_{\neq \emptyset}\setminus \SAdm(\mu)_{\DL}$.
This set parametrizes the global Ekedahl-Oort strata which intersect but are not contained in the supersingular locus (cf.\ \cite[Lemma 7.6]{GHN19}).
The following theorem gives a complete description of $\SAdm(\mu)_{\neq \emptyset}$.
This was only known in fully Hodge-Newton decomposable cases $n\le 4$ and the case $n=5$ studied in \cite[Theorem 6.7]{ABFGGN24}.
See Example \ref{13 14} for the cases where $n=13$ and $14$.

\begin{thm}[Theorem \ref{non-empty}]
\label{non-empty thm}
We have
\begin{align*}
\SAdm(\mu)_{\DL}=\{w_{k,l}\mid \text{$k=1$ or $l\le \tfrac{n+2}{2}$}\}.
\end{align*}
Moreover, $w_{k,l}\in\SAdm(\mu)_{\neq\DL}$ if and only if $3\le k< \frac{n+2}{2}<l\le n-1$ and one of the following conditions is satisfied:
\begin{enumerate}[(i)]
\item $k$ is odd and $k+l\le n+2$.
\item $l\equiv n-1$ (mod $2$) and $k+l\geq n+3$.
\end{enumerate}
\end{thm}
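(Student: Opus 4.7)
The plan is to treat the two assertions separately: first determine $\SAdm(\mu)_{\DL}$ via the spherical criterion of Proposition \ref{spherical}, then determine which of the remaining $w_{k,l}$ give rise to non-empty Ekedahl-Oort strata.

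For the first assertion, I would write $w_{k,l}$ explicitly as $\vp^\mu$ times the product of two disjoint decreasing cycles $(n{-}1\ \cdots\ k)$ and $(n\ \cdots\ l)$, and translate the combinatorial condition of Proposition \ref{spherical} into a direct condition on the pair $(k,l)$. The case $k=1$ corresponds to the shorter cycle collapsing (only one long cycle survives), while the case $l\le (n+2)/2$ captures the situation where the support of the long cycle $(n\ \cdots\ l)$ and its image under the Frobenius $\sigma$ (which in the unitary setting permutes indices by $i\mapsto n+1-i$) are suitably disjoint, so that the element is cordial/spherical in the sense required by the proposition. This part should reduce to a careful bookkeeping of supports and $\sigma$-action.

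For the second assertion, non-emptiness of $\pi(X_{w_{k,l}}(b))$ for basic $b$ is governed by whether $w_{k,l}$ is $\sigma$-conjugate in $\tW$ to $[b]$, equivalently whether $\nu(w_{k,l}) = \nu_b$. I would compute the Newton point of $w_{k,l}$ by determining the cycle structure of the twisted permutation obtained by composing the two decreasing cycles with $\vp^\mu$ and with $\sigma$. The decisive combinatorial input is whether the supports $\{k,\ldots,n{-}1\}$ and $\{n{+}1{-}l,\ldots,n{-}k\}$ (the $\sigma$-image of $\{l,\ldots,n\}$ after reindexing) overlap; the threshold $k+l=n+2$ arises precisely at the point where these supports just touch, which explains the split into cases (i) and (ii). Within each case the parities of $k$ and $l$ determine whether the resulting $\sigma$-orbits have even length (giving the central slope) or produce parasitic odd-length orbits (forcing the slopes off the basic Newton point).

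The main obstacle is the combinatorial orbit analysis in case (ii) where $k+l\ge n+3$: here the two cycles and the Frobenius interact in a genuinely global way, and the parity condition $l\equiv n-1\pmod 2$ comes from requiring that a specific chain of indices $l,l+1,\ldots,n,n+1-k,\ldots,n-l+1$ closes up into $\sigma$-stable orbits of the correct length. The opposite case (i) is combinatorially dual but lighter, as the two cycle-supports are genuinely disjoint and the parity of $k$ alone controls the orbit structure. Once the orbit decomposition of $w_{k,l}\sigma$ is written down in each case and compared against $\nu_b$, the characterization in (i) and (ii) should follow; the upper bound $l\le n-1$ and lower bound $k\ge 3$ are forced by $w_{k,l}\in\SAdm(\mu)$ and by the exclusion already contained in $\SAdm(\mu)_{\DL}$, respectively. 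I expect to handle small $n$ separately by direct verification, as in Example \ref{13 14}, to confirm the parity conditions.
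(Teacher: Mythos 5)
Your first step is essentially sound: $\SAdm(\mu)_{\DL}$ is by definition (\S\ref{setting}) the set of $w_{k,l}$ with $\supp_\sigma(w_{k,l})\neq\tS$, and the displayed equality is indeed the bookkeeping exercise with the $\tau\sigma$-orbits on $\tS$ that you describe.

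The second step, however, rests on a false criterion. You assert that for basic $b$ the non-emptiness of $X_{w_{k,l}}(b)$ is ``governed by whether $w_{k,l}$ is $\sigma$-conjugate in $\tW$ to $[b]$, equivalently whether $\nu(w_{k,l})=\nu_b$.'' The implication $\nu(\dot w)=\nu_b$ (with matching Kottwitz points) $\Rightarrow X_w(b)\neq\emptyset$ is trivial, since $\dot w\in IwI$; but the converse fails in general: $IwI$ meets several $\sigma$-conjugacy classes, and outside the fully Hodge--Newton decomposable range the elements of $\SAdm(\mu)_{\neq\DL}$ are precisely those whose Ekedahl--Oort strata meet both the basic locus and non-basic Newton strata. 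So even if your cycle/orbit computation happened to reproduce the set in the statement, it would only prove non-emptiness for those $(k,l)$ whose representative $\dot w_{k,l}$ is itself basic, and would give no argument whatsoever for emptiness of the complementary $(k,l)$. The correct tool is the refined non-emptiness criterion of Theorem \ref{empty}, which is phrased in terms of length-positive elements $v\in\LP(w)$ and the $\sigma$-support of $\sigma^{-1}(v)^{-1}p(w)v$, not in terms of $\nu(\dot w)$. This is exactly how the paper proceeds: emptiness in Lemma \ref{empty lemm} is proved by exhibiting explicit $r^{-1}\in R(w_{k,l})$ with $\supp_\sigma(r\,p(w_{k,l})\,\sigma(r)^{-1})\subsetneq S$, and non-emptiness of the remaining strata is obtained not from a Newton-point computation but from explicit Deligne--Lusztig reduction paths (Lemmas \ref{reduction 0} and \ref{reduction}) connecting $w_{k,l}$ to the already non-empty $w'_{k,l}$ --- a step that is moreover indispensable later for the fibration structure in Theorem \ref{main theo}. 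Since your proposal engages with neither of these mechanisms, the parity conditions (i)--(ii) cannot be derived along the route you sketch.
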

For $w_{k,l}\in \SAdm(\mu)_{\neq \DL}$, let $w'_{k,l}$ denote $w_{k-2,l}$ (resp.\ $w_{k,l-2}$, resp.\ $w_{k-1,l-1}$) if $k+l\le n+2$ (resp.\ $k+l\geq n+4$, resp.\ $k+l=n+3$).
Then $w'_{k,l}\in \SAdm(\mu)_{\neq \emptyset}$ by Theorem \ref{non-empty thm}.
The following theorem relates $\pi(X_{w_{k,l}}(b))$ to $\pi(X_{w'_{k,l}}(b))$.
\begin{thm}[Theorem \ref{main theo}]
\label{DL method thm}
Let $w_{k,l}\in \SAdm(\mu)_{\neq \DL}$.
Then $\pi(X_{w_{k,l}}(b))$ is a Zariski-locally trivial $\A^1$-bundle over $\pi(X_{w'_{k,l}}(b))$.
In particular, if $k+l\le n+2$ (resp.\ $k+l\geq n+3$), then $\pi(X_{w_{k,l}}(b))$ is an iterated fibration of rank $\frac{k-1}{2}$ (resp.\ $k+\frac{l-n-3}{2}$) over $\pi(X_{w_{1,l}}(b))$ (resp.\ $\pi(X_{w_{1,n-k+2}}(b))$), whose fibers are all $\A^1$.
\end{thm}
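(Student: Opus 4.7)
The plan is to split the theorem into two pieces. The ``in particular'' part follows from the first assertion by induction: applying $w\mapsto w'$ strictly decreases $(k,l)$ lexicographically (by $(2,0)$, $(0,2)$, or $(1,1)$ according to the trichotomy in the definition of $w'$), and by Theorem~\ref{non-empty thm} the iteration stays in $\SAdm(\mu)_{\neq\emptyset}$ until reaching $w_{1,l}$ (if one starts in the regime $k+l\le n+2$) or $w_{1,n-k+2}$ (if one starts in $k+l\ge n+3$), both of which lie in $\SAdm(\mu)_{\DL}$. A direct count gives the number of steps as $\tfrac{k-1}{2}$ or $k+\tfrac{l-n-3}{2}$ respectively, so composing the $\A^1$-bundles from the first part yields the claimed iterated fibration.

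For the $\A^1$-bundle assertion, my plan is to work at the Iwahori level with the fine Deligne-Lusztig variety $X_{w_{k,l}}(b)\subset\AFl$ and produce the bundle via an explicit coordinate. Writing $w_{k,l}=\vp^{\mu}c_kc_l$ with $c_k=(n-1\,\cdots\,k)$ and $c_l=(n\,\cdots\,l)$, one chooses normal-form Iwahori-coset representatives whose entries lie in finitely many Schubert-cell coordinates. The relative-position condition $g^{-1}b\sigma(g)\in Iw_{k,l}I$ then translates into polynomial equations cutting out a locally closed subvariety of affine space; comparing this with the analogous system for $w'_{k,l}$ should reveal exactly one additional coordinate that is free to vary in $\A^1$. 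Geometrically, this coordinate sits in the ``gap'' between the supports of the two cycles $c_k$ and $c_l$, and the trichotomy $k+l\lessgtr n+3$ records on which side of that gap one cycle has to shorten in order to land in $\pi(X_{w'_{k,l}}(b))$. Forgetting the free coordinate defines the projection, while the $\Ga$-translation action on it gives a Zariski-local trivialisation; after checking invariance under the residual parahoric action, the bundle descends from $X_{w_{k,l}}(b)$ to its image $\pi(X_{w_{k,l}}(b))$.

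The main obstacle I expect is the combinatorial identification of the free coordinate in each of the three sub-cases, and in particular surjectivity of the projection onto $\pi(X_{w'_{k,l}}(b))$. Surjectivity would fail if some point in the base did not lift, and ruling this out should use Theorem~\ref{non-empty thm} to guarantee that the ``lifted'' element $w_{k,l}$ is still in $\SAdm(\mu)_{\neq\emptyset}$, but making the geometric lifting explicit is likely the most delicate step. A secondary issue is the descent from the Iwahori level to the affine Grassmannian: the free coordinate must be compatible with the residual parahoric action on Iwahori-fibres of $\pi$, which relies on the unitary constraint imposed by $b$ to pin down which matrix entries can legitimately be used as the $\A^1$-parameter.
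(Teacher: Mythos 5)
Your reduction of the ``in particular'' part to the single-step $\A^1$-bundle claim is fine and matches the paper. But your plan for the single-step claim misses the mechanism the paper actually uses, and I do not think the coordinate approach as described can be completed. The paper does not compare defining equations of $Iw_{k,l}I$ and $Iw'_{k,l}I$ directly; instead it runs the Deligne--Lusztig reduction method. Concretely (Lemmas \ref{reduction 0} and \ref{reduction}), one produces a long explicit chain of length-preserving $\sigma$-conjugations $w_{k,l}\approx w'$ and a single simple reflection $s$ with $\ell(sw'\sigma(s))=\ell(w')-2$, $sw'\sigma(s)\approx w'_{k,l}$ and $sw'\approx w_{k-1,l}$ (or $w_{k,l-1}$). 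Proposition \ref{DL method prop} then decomposes $X_{w'}(b)=X_1\sqcup X_2$ with $X_1$ a $\G_m$-bundle over $X_{sw'}(b)$ and $X_2$ an $\A^1$-bundle over $X_{sw'\sigma(s)}(b)$. The crucial point --- absent from your proposal --- is that $X_{sw'}(b)=\emptyset$: the companion element $w_{k-1,l}$ (resp.\ $w_{k,l-1}$) falls into one of the cases of Lemma \ref{empty lemm}, whose proof is a nontrivial application of Schremmer's criterion (Theorem \ref{empty}) via length-positive elements. Only because the $\G_m$-part vanishes is the \emph{whole} stratum an $\A^1$-bundle rather than a union of a $\G_m$-piece and an $\A^1$-piece. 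Your ``one additional free coordinate'' heuristic silently assumes exactly this, and proving that the extra coordinate ranges over all of $\A^1$ (and that no second branch appears) is equivalent to that emptiness statement; your appeal to Theorem \ref{non-empty thm} for ``surjectivity'' does not supply it, and is in any case circular, since the description of $\SAdm(\mu)_{\neq\emptyset}$ is itself established by this very reduction argument.

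The descent from the Iwahori level to $\pi(X_{w_{k,l}}(b))$ is also more delicate than your ``secondary issue'' suggests and has nothing to do with a unitary constraint on matrix entries. The paper first identifies $\pi(X_w(b))$ with $\pi_{S(w,\sigma)}(X_w(b))$, then must check (Lemma \ref{SwnDL}) that $S(w_{k,l},\sigma)=S(w'_{k,l},\sigma)$ and that every simple affine reflection occurring in the reduction sequence commutes with this set; only then does the parahoric reduction method (Proposition \ref{DL method prop parahoric}) transport the $\A^1$-bundle structure to the Ekedahl--Oort strata. Without these two ingredients --- the emptiness of the $\G_m$-companion and the commutation/invariance of $S(w_{k,l},\sigma)$ --- your outline is a plausible-sounding plan rather than a proof.
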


An iterated fibration is the composite of some Zariski-locally trivial $\A^1$-bundles (cf.\ \S\ref{DL method}).
The variety $X_\mu(b)$ and each Ekedahl-Oort stratum admit an action of $G(F)$.
All of the fibrations appearing in Theorem \ref{DL method thm} are $G(F)$-equivariant.

Combining Theorem \ref{DL method thm} with the description of $\pi(X_{w_{k,l}}(b))$ for $w_{k,l}\in\SAdm(\mu)_{\DL}$, we obtain the following description of $X_\mu(b)$, which coincides with the conventional ones in fully Hodge-Newton cases $n\le 4$.
\begin{thm}[Theorem \ref{main theo}]
\label{stratification}
Let $w_{k,l}\in \SAdm(\mu)_{\neq \emptyset}$.
Then there exists a standard parahoric subgroup $P_{k,l}$, a Deligne-Lusztig variety $X_{k,l}$ and an irreducible component $Y_{k,l}$ of $\pi(X_{w_{k,l}}(b))$ such that $\pi(X_{w_{k,l}}(b))=\sqcup_{j\in G(F)/G(F)\cap P_{k,l}}jY_{k,l}$ and $Y_{k,l}$ is an iterated fibration over $X_{k,l}$.
In particular, the variety $X_\mu(b)$ is naturally a disjoint union of iterated fibrations over Deligne-Lusztig varieties.
\end{thm}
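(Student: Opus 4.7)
The strategy is to split along the decomposition $\SAdm(\mu)_{\neq\emptyset}=\SAdm(\mu)_{\DL}\sqcup\SAdm(\mu)_{\neq\DL}$ and then combine the existing description in the DL-case with the iterated $\A^1$-fibration structure supplied by Theorem \ref{DL method thm}.

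First, for $w_{k,l}\in\SAdm(\mu)_{\DL}$, one reads off the triple $(P_{k,l},X_{k,l},Y_{k,l})$ directly from Proposition \ref{spherical}, taking $Y_{k,l}=X_{k,l}$ and regarding it as an iterated fibration of rank zero. By Theorem \ref{non-empty thm}, the strata $w_{1,l}$ and $w_{1,n-k+2}$ that appear as bases in Theorem \ref{DL method thm} all satisfy $k=1$, hence lie in $\SAdm(\mu)_{\DL}$, so the triple is already in hand for them.

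Next, for $w_{k,l}\in\SAdm(\mu)_{\neq\DL}$, let $f\colon\pi(X_{w_{k,l}}(b))\to\pi(X_{w_{1,l_0}}(b))$ denote the iterated $\A^1$-bundle of Theorem \ref{DL method thm}, where $l_0=l$ or $n-k+2$ according to the sign of $k+l-n-2$. Setting $P_{k,l}\coloneqq P_{1,l_0}$, $X_{k,l}\coloneqq X_{1,l_0}$, and $Y_{k,l}\coloneqq f^{-1}(Y_{1,l_0})$, the $G(F)$-equivariance of $f$ gives $f^{-1}(jY_{1,l_0})=jY_{k,l}$ for every $j$, which yields the required disjoint decomposition indexed by $G(F)/G(F)\cap P_{k,l}$. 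Irreducibility of $Y_{k,l}$ follows from irreducibility of the $\A^1$-fibers together with irreducibility of $Y_{1,l_0}$, and the composition $Y_{k,l}\to Y_{1,l_0}=X_{k,l}$ is by construction an iterated fibration. The global statement about $X_\mu(b)$ follows from the Ekedahl-Oort decomposition $X_\mu(b)=\sqcup_{w\in\SAdm(\mu)_{\neq\emptyset}}\pi(X_w(b))$.

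\textbf{Main obstacle.} All of the substantive work has already been absorbed into Theorems \ref{non-empty thm} and \ref{DL method thm} and Proposition \ref{spherical}; the only residual point here is checking that irreducibility and the $P_{k,l}$-indexed disjoint decomposition of the base transfer through the iterated $\A^1$-bundle, which is immediate from its $G(F)$-equivariance and the irreducibility of its fibers.
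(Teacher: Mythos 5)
Your proposal is correct and follows essentially the same route as the paper: the paper also obtains this statement by combining Proposition \ref{spherical} (transported from $\tau$ to $b$ via $\tau_1$) for the strata in $\SAdm(\mu)_{\DL}$ with the $G(F)$-equivariant iterated $\A^1$-fibration of Theorem \ref{DL method thm}, taking $Y_{k,l}$ to be the preimage of an irreducible component of the base stratum and noting $P_{k,l}=G(\cO)$ there. The only substance you defer — that the fibration exists at the level of $\pi(X_{w_{k,l}}(b))$, which in the paper rests on Lemma \ref{SwnDL} and Proposition \ref{DL method prop parahoric} — is exactly the content of Theorem \ref{DL method thm}, which you are entitled to cite.
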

In Theorem \ref{main theo}, both $P_{k,l}$ and $X_{k,l}$ are described explicitly.
For example, if $w_{k,l}\in \SAdm(\mu)_{\neq \DL}$, then $P_{k,l}=G(\cO)$ and $X_{k,l}$ is a Deligne-Lusztig variety associated to $w_{k,l}$ in a partial flag variety for $G(\aFq)$ (see Remark \ref{DL U}).
Also, if $w_{k,l}$ is of positive Coxeter type (cf.\ \S\ref{LP}), then $Y_{k,l}$ is the product of a Deligne-Lusztig variety and a finite-dimensional affine space (see Corollary \ref{Coxeter coro}).

As a consequence of Theorem \ref{stratification}, we can describe the irreducible components of $X_\mu(b)$ as the closure of the irreducible components of top-dimensional Ekedahl-Oort strata (see Theorem \ref{irreducible components theo}).
In general, it is hard to describe $\overline{jY_{k,l}}=j\overline{Y_{k,l}}$ explicitly.
Indeed, if $n=5$, then the irreducible component $\overline{Y_{3,4}}$ cannot be written as a union of the strata in Theorem \ref{stratification} (see Example \ref{counterexample}).
On the other hand, we have the following description for $w_{k,l}\in \SAdm(\mu)_{\DL}$, which is similar to the fully Hodge-Newton decomposable cases.

\begin{thm}[Corollary \ref{closure relation}]
Let $w_{k,l}\in \SAdm(\mu)_{\DL}$.
Then the closure of $jY_{k,l}(=jX_{k,l})$ in $X_\mu(b)$ is a union of the strata in Theorem \ref{stratification}.
For $w_{k',l'}\in \SAdm(\mu)_{\neq \emptyset}$, $j\overline{Y_{k,l}}$ contains $j'Y_{k',l'}$ if and only if the following two conditions are both satisfied:
\begin{enumerate}[(i)]
\item $k\geq k'$ and $l\geq l'$.
\item $j(G(F)\cap P_{k,l})\cap j'(G(F)\cap P_{k',l'})\neq \emptyset$.
\end{enumerate}
In particular, $w_{k',l'}\in \SAdm(\mu)_{\DL}$ in this case.
\end{thm}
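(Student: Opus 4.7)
The plan is to reduce the closure relations in $X_\mu(b)$ to those of classical Deligne-Lusztig varieties in a finite-dimensional partial flag variety. By Theorem~\ref{main theo}, for $w_{k,l}\in\SAdm(\mu)_{\DL}$ the component $Y_{k,l}$ equals the Deligne-Lusztig variety $X_{k,l}$, which embeds as a locally closed subscheme of the fiber over a fixed vertex of the projection $G(L)/\cI \to G(L)/P_{k,l}$, and each translate $jY_{k,l}$ sits similarly over $j(G(F)\cap P_{k,l})$. So the closure question decomposes into a local computation inside a single parahoric fiber plus a combinatorial check on how different fibers interact.

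The first step is to pin down the partial order governing closures inside one fiber. From the explicit description $w_{k,l}=\vp^\mu(n-1\ \cdots\ k)(n\ \cdots\ l)$ together with the standard closure formula for Deligne-Lusztig varieties in partial flag varieties, I expect the closure of $X_{k,l}$ to be the disjoint union of $X_{k',l'}$ over those indices with $k'\le k$ and $l'\le l$; this yields condition~(i). Combining with Theorem~\ref{non-empty thm}, if $w_{k,l}\in\SAdm(\mu)_{\DL}$ then either $k=1$ or $l\le\tfrac{n+2}{2}$, and both properties are preserved under $(k,l)\mapsto(k',l')$ with $k'\le k$ and $l'\le l$; thus $w_{k',l'}\in\SAdm(\mu)_{\DL}$ and $X_{k',l'}=Y_{k',l'}$. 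This simultaneously yields the ``in particular'' assertion and shows that every component of the local closure is already a stratum of Theorem~\ref{stratification}.

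The main obstacle is to lift this local description to $X_\mu(b)$ and extract condition~(ii). By $G(F)$-equivariance, every stratum $jY_{k,l}$ is supported over the single vertex $j(G(F)\cap P_{k,l})$, so if a limit point of $jY_{k,l}$ lies in $j'Y_{k',l'}$, then $j(G(F)\cap P_{k,l})$ and $j'(G(F)\cap P_{k',l'})$ must share at least one $G(F)$-point, which is exactly condition~(ii). Conversely, any such common point $j''$ identifies $jY_{k,l}=j''Y_{k,l}$ and $j'Y_{k',l'}=j''Y_{k',l'}$, after which the local closure formula from the previous step realises $j'Y_{k',l'}$ inside $\overline{jY_{k,l}}$. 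The part I expect to require the most care is excluding that closures might contain strata violating~(i): this should follow because the classical partial flag variety containing $X_{k,l}$ is closed in $G(L)/P_{k,l}$, so closures inside it cannot escape to fibers other than those reached through a single parahoric intersection recorded by~(ii).
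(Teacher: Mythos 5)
There is a genuine gap, and it sits exactly where you flagged "the part I expect to require the most care." The closure relations among the strata $j\tau_1\pi(Y(w_{k,l}))$ are not governed by the Bruhat order on the elements $w_{k,l}$ (nor by the "standard closure formula" $\overline{X(w)}=\sqcup_{w'\le w}X(w')$ for classical Deligne--Lusztig varieties): each stratum is the image of $K\cdot_\sigma Iw_{k,l}I$, and by \cite[Theorem 7.2.1]{GH15} (or \cite[Theorem 3.1(2)]{He09}) its closure contains the stratum of $w_{k',l'}$ precisely when $w_{k,l}\geq_{S,\sigma}w_{k',l'}$, i.e.\ when $w_{k,l}\geq u^{-1}w_{k',l'}\sigma(u)$ for \emph{some} $u\in W_0$. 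Your argument only rules out limit points escaping to other parahoric fibers (which is the content of condition (ii)); it does not rule out closure relations produced by a nontrivial twist $u$, which a priori could force $j\overline{Y_{k,l}}$ to contain strata $Y_{k',l'}$ with $k'>k$ or $l'>l$, or strata outside $\SAdm(\mu)_{\DL}$. The substantive content of the paper's proof is precisely the verification that $w_{k,l}\geq_{S,\sigma}w_{k',l'}$ implies $k\geq k'$ and $l\geq l'$: this is Lemma \ref{closure relation lemm}, a combinatorial analysis of which $u\in W_0$ can satisfy $\ell(w_{k',l'}\sigma(u))=\ell(w_{k',l'})-\ell(u)$ and $u^{-1}w_{k',l'}\sigma(u)\le w_{k,l}$ (using \cite[Proposition 3.8]{He07a}, the explicit description of $\supp_\sigma(w_{k,l})$, and \cite[Proposition 3.5]{ABFGGN24}). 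This equivalence is not a routine consequence of the closure formula for finite Deligne--Lusztig varieties; indeed the paper notes it settles part of \cite[Conjecture 3.21]{ABFGGN24}.

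A secondary imprecision: the strata for different $(k,l)$ in $\SAdm(\mu)_{\DL}$ live in different partial flag varieties $P_{\supp_\sigma(w_{k,l})\cup S(w_{k,l},\sigma)}/P_{S(w_{k,l},\sigma)}$ attached to different parahorics $P_{k,l}$, and the closure is taken in $X_\mu(b)\subset G(L)/K$; so there is no single finite flag variety in which "the local computation" takes place, and even granting a classical closure formula one must still pass through the $\geq_{S,\sigma}$ criterion to compare strata attached to distinct parahoric levels. Once Lemma \ref{closure relation lemm} (equivalence of (i) with $w_{k,l}\geq_{S,\sigma}w_{k',l'}$) is in hand, your bookkeeping of vertices via condition (ii) and the deduction of the "in particular" clause from Theorem \ref{non-empty} are in line with the paper's argument.
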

The proof is also similar to the fully Hodge-Newton decomposable cases.
The condition (i) is equivalent to both the Bruhat order $w_{k,l}\geq w_{k',l'}$ and a variant of it relative to the hyperspecial level.
This equivalence was already proved and conjectured in \cite[Proposition 3.5 \& Conjecture 3.21]{ABFGGN24}.
As explained in \cite[\S2.4]{GHN24}, we can express (ii) in terms of the building for $G(F)$.

The paper is organized as follows.
In \S\ref{preliminaries} we introduce affine Deligne-Lusztig varieties and relevant notions.
The main ingredients of the proofs are the Deligne-Lusztig reduction method (Proposition \ref{DL method prop parahoric}) and a non-emptiness criterion of the Iwahori level affine Deligne-Lusztig varieties (Theorem \ref{empty}).
These techniques, which are completely different from the one used in \cite{FI21}, enable us to analyze affine Deligne-Lusztig varieties through combinatorics of the affine Weyl group.
In \S\ref{non-empty section}, we prove Theorem \ref{non-empty thm} by explicit computation in the affine Weyl group.
In \S\ref{geometric structure}, we prove the remaining main theorems by adding some supplementary computation.

\textbf{Acknowledgments:}
The author would like to thank Naoki Imai and Felix Schremmer for helpful comments.

This work was partially supported by the New Cornerstone Science Foundation through the New Cornerstone Investigator Program awarded to Professor Xuhua He.

\section{Preliminaries}
\label{preliminaries}
We will usually drop the adjective ``perfect'' for notational convenience even in the mixed characteristic case.
\subsection{Notation}
\label{notation}
Let $F$ be a non-archimedean local field with finite residue field $\F_q$ of prime characteristic $p$, and let $L$ be the completion of the maximal unramified extension of $F$.
Let $\sigma$ denote the Frobenius automorphism of $L/F$.
Further, we write $\cO$ (resp.\ $\cO_F$) for the valuation ring of $L$ (resp.\ $F$).
Finally, we denote by $\vp$ a uniformizer of $F$ and by $v_L$ the valuation of $L$ such that $v_L(\vp)=1$.

Let $G$ be an unramified connected reductive group over $\cO_F$.
Let $B\subset G$ be a Borel subgroup and $T\subset B$ a maximal torus in $B$, both defined over $\cO_F$.
For a cocharacter $\mu\in X_*(T)$, let $\vp^{\mu}$ be the image of $\vp\in \mathbb G_m(F)$ under the homomorphism $\mu\colon\mathbb G_m\rightarrow T$.

Let $\Phi=\Phi(G,T)$ denote the set of roots of $T$ in $G$.
We denote by $\Phi_+$ (resp.\ $\Phi_-$) the set of positive (resp.\ negative) roots distinguished by $B$.
Let $\Delta$ be the set of simple roots and $\Delta^\vee$ be the corresponding set of simple coroots.
Let $X_*(T)$ be the set of cocharacters, and let $X_*(T)_+$ be the set of dominant cocharacters.

The Iwahori-Weyl group $\tW=\tW_G$ is defined as the quotient $N_{G(L)}T(L)/T(\cO)$.
This can be identified with the semi-direct product $W_0\ltimes X_{*}(T)$, where $W_0$ is the finite Weyl group of $G$.
We denote the projection $\tW\rightarrow W_0$ by $p$.
We have a length function $\ell\colon \tW\rightarrow \Z_{\geq 0}$ given as
$$\ell(u\vp^{\lambda})=\sum_{\alpha\in \Phi_+, u\alpha\in \Phi_-}|\langle \alpha, \lambda\rangle+1|+\sum_{\alpha\in \Phi_+, u\alpha\in \Phi_+}|\langle \alpha, \lambda\rangle|,$$
where $u\in W_0$ and $\lambda\in \Y$.

Let $S\subset W_0$ denote the subset of simple reflections, and let $\tS\subset \tW$ denote the subset of simple affine reflections.
We often identify $\Delta$ and $S$.
The affine Weyl group $W_a$ is the subgroup of $\tW$ generated by $\tS$.
Then we can write the Iwahori-Weyl group as a semi-direct product $\tW=W_a\rtimes \Omega$, where $\Omega\subset \tW$ is the subgroup of length $0$ elements.
Moreover, $(W_a, \tS)$ is a Coxeter system.
We denote by $\le$ the Bruhat order on $\tW$.
For any $J\subseteq \tS$, let $^J\tW$ (resp.\ $\tW^J$) be the set of minimal length elements for the cosets in $W_J\backslash \tW$ (resp.\ $\tW/W_J$), where $W_J$ denotes the subgroup of $\tW$ generated by $J$.
We write $^J\tW^{J'}$ for $^J\tW\cap \tW^{J'}$.

For $w\in W_a$, we denote by $\supp(w)\subseteq \tS$ the set of simple affine reflections occurring in every (equivalently, some) reduced expression of $w$.
Note that $\tau\in \Omega$ acts on $\tS$ by conjugation.
We define the $\sigma$-support $\supp_\sigma(w\tau)$ of $w\tau$ as the smallest $\tau\sigma$-stable subset of $\tS$ which contains $\supp(w)$.
We call an element $w\tau\in W_a\tau$ a $\sigma$-Coxeter element if exactly one simple reflection from each $\tau\sigma$-orbit on $\supp_\sigma(w\tau)$ occurs in every (equivalently, any) reduced expression of $w$.

For $w,w'\in \tW$ and $s\in \tS$, we write $w\xrightarrow{s} w'$ if $w'=sw\sigma(s)$ and $\ell(w')\le \ell(w)$.
We write $w\rightarrow w'$ if there is a sequence $w=w_0,w_1,\ldots, w_k=w'$ of elements in $\tW$ such that for any $i$, $w_{i-1}\xrightarrow{s_i} w_i$ for some $s_i\in \tS$.
If $\ell(w)=\ell(w')$, we write $w\approx w'$.
We also write $w\xrightarrow{s_k\cdots s_1s_0} w'$ when we want to specify the sequence defining $w\rightarrow w'$.
Note that in many other papers, $\rightarrow$ and $\approx$ are denoted as $\rightarrow_\sigma$ and $\approx_\sigma$ respectively.
We drop $\sigma$ for notational convenience.

For $\alpha\in \Phi$, let $U_\alpha\subseteq G$ denote the corresponding root subgroup.
We set $$I=T(\cO)\prod_{\alpha\in \Phi_+}U_{\alpha}(\vp\cO)\prod_{\beta\in \Phi_-}U_{\beta}(\cO)\subseteq G(L),$$
which is called the standard Iwahori subgroup associated to the triple $T\subset B\subset G$.
For $J\subset \tS$, let $P_J\supset I$ be the standard parahoric subgroup corresponding to $J$.
We denote by $\pi_J$ the projection $G(L)/I\rightarrow G(L)/P_J$.
Set $K=P_S=G(\cO)$ and $\pi=\pi_S$.

In the case $G=\GL_n$, we will use the following description.
Let $T$ be the torus of diagonal matrices, and we choose the subgroup of upper triangular matrices $B$ as Borel subgroup.
Let $\chi_{ij}$ be the character $T\rightarrow \Gm$ defined by $\mathrm{diag}(t_1,t_2,\ldots, t_n)\mapsto t_i{t_j}^{-1}$.
Then we have $\Phi=\{\chi_{ij}\mid i\neq j\}$, $\Phi_+=\{\chi_{ij}\mid i< j\}$, $\Phi_-=\{\chi_{ij}\mid i> j\}$ and $\Delta=\{\chi_{i,i+1}\mid 1\le i <n\}$.
Through the isomorphism $X_*(T)\cong \Z^n$, ${X_*(T)}_+$ can be identified with the set $\{(m_1,\cdots, m_n)\in \Z^n\mid m_1\geq \cdots \geq m_n\}$.
Let us write $s_1=(1\ 2), s_2=(2\ 3), \ldots, s_{n-1}=(n-1\ n)$.
Set $s_0=\vp^{\chi_{1,n}^{\vee}}(1\ n)$, where $\chi_{1,n}$ is the unique highest root.
Then $S=\{s_1,s_2,\ldots, s_{n-1}\}$ and $\tS=S\cup\{s_0\}$.
The Iwahori subgroup $I\subset K$ is the inverse image of $B^{\op}$ under the projection $G(\cO)\rightarrow G(\aFq)$ sending $\vp$ to $0$, where $B^{\op}$ is the subgroup of lower triangular matrices.
Similarly, if $J\subset S$, then $P_J$ is the inverse image of the standard parabolic subgroup (which contains $B^{\op}$) corresponding to $J$.
Finally, $\vp^{(1,0^{(n-1)})}s_1s_2\cdots s_{n-1}$ is a generator of $\Omega\cong \Z$.

\subsection{Affine Deligne-Lusztig Varieties}
\label{ADLV}
For $w\in \tW$ and $b\in G(L)$, the affine Deligne-Lusztig variety $X_w(b)$ in the affine flag variety $G(L)/I$ is defined as
$$X_w(b)=\{xI\in G(L)/I\mid x^{-1}b\sigma(x)\in IwI\}.$$
For $\mu\in \Y_+$ and $b\in G(L)$, the affine Deligne-Lusztig variety $X_{\mu}(b)$ in the affine Grassmannian $G(L)/K$ is defined as
$$X_{\mu}(b)=\{xK\in G(L)/K\mid x^{-1}b\sigma(x)\in K\vp^{\mu}K\}.$$
For $w\in {^J\tW^{\sigma(J)}}$ and $b\in G(L)$, the (coarse) affine Deligne-Lusztig variety $X_{J,w}(b)$ in $G(L)/P_J$ is defined as
$$X_{J,w}(b)=\{gP_J\in G(L)/P_J\mid g^{-1}b\sigma(g)\in P_JwP_{\sigma(J)}\}.$$
In the equal characteristic case, affine Deligne-Lusztig varieties are schemes, locally of finite type over $\aFq$.
In the mixed characteristic case, affine Deligne-Lusztig varieties are perfect schemes, locally perfectly of finite type over $\aFq$.
See \cite{PR08}, \cite{Zhu17}, \cite{BS17} and \cite[Lemma 1.1]{HV18}.
Left multiplication by $g^{-1}\in G(L)$ induces an isomorphism between affine Deligne-Lusztig varieties corresponding to $b$ and $g^{-1}b\sigma(g)$.
Thus the isomorphism class of the affine Deligne-Lusztig variety only depends on the $\sigma$-conjugacy class of $b$.
Also, the affine Deligne-Lusztig varieties carry a natural action (by left multiplication) by the $\sigma$-centralizer of $b$
$$\J_b=\{g\in G(L)\mid g^{-1}b\sigma(g)=b\}.$$
Note that $\J_b\cong \J_{g^{-1}b\sigma(g)}$ by sending $j$ to $g^{-1}jg$.
\begin{rema}
In \cite[\S 3.4]{GH15}, $\pi_J(X_w(b))$ was denoted by $X_{J,w}(b)$.
We hope our notation will not cause confusions.
\end{rema}

The admissible subset of $\tW$ associated to $\mu$ is defined as
$$\Adm(\mu)=\{w\in \tW\mid w\le \vp^{u\mu}\ \text{for some}\ u\in W_0\}.$$
Set $\SAdm(\mu)=\Adm(\mu)\cap \SW$.
Assume that $\mu$ is minuscule.
Then, by \cite[Theorem 3.2.1]{GH15} (see also \cite[\S2.5]{GHR20}), we have
$$X_{\mu}(b)=\bigsqcup_{w\in\SAdm(\mu)}\pi(X_w(b)).$$
This is called the {\it Ekedahl-Oort stratification}.
Note that
$$\pi(X_w(b))=\{gK\in G(L)/K\mid g^{-1}b\sigma(g)\in K\cdot_\sigma IwI\},$$
where $\cdot_\sigma$ denotes the $\sigma$-twisted conjugation action of $G(L)$.

Set $S(w,\sigma)=\max\{S'\subseteq S\mid \Ad(w)\sigma(S')=S'\}$.
Then $P_{S(w,\sigma)}wP_{\sigma (S(w,\sigma))}=P_{S(w,\sigma)}\cdot_\sigma IwI$ for $w\in \SW$ by \cite[Theorem 3.2.1]{GH15}.
Moreover, by \cite[Theorem 4.1.2 (1)]{GH15}, the projection $G(L)/P_{S(w,\sigma)}\rightarrow G(L)/K$ induces an isomorphism $\pi_{S(w,\sigma)}(X_w(b))=X_{S(w,\sigma),w}(b)\cong\pi(X_w(b))$.

It follows from \cite[Proposition 5.7]{GHN19} that if $W_{\supp_\sigma(w)}$ is finite, then $W_{\supp_\sigma(w)\cup S(w,\sigma)}$ is also finite.
The following proposition is a combination of \cite[Proposition 2.2.1 \& \S4.1]{GH15} (see also \cite[\S 2.4]{GHN24} and \cite[Lemma 2.5]{Gortz19}).
\begin{prop}
\label{spherical}
Let $\tau\in \Omega$.
Let $w\in W_a\tau$ such that $W_{\supp_\sigma(w)}$ is finite.
Then $$X_w(\tau)=\bigsqcup_{j\in \J_\tau/\J_\tau\cap P_{\supp_\sigma(w)}} jY(w),$$
where $Y(w)=\{gI\in P_{\supp_\sigma(w)}/I\mid g^{-1}\tau \sigma(g)\in IwI\}$ is a Deligne-Lusztig variety in the flag variety $P_{\supp_\sigma(w)}/I$.

Let $w\in \SW \cap W_a\tau$ such that $W_{\supp_\sigma(w)}$ is finite.
Then
$$\pi(X_w(\tau))\cong\pi_{S(w,\sigma)}(X_w(\tau))=\bigsqcup_{j\in \J_\tau/\J_\tau\cap P_{\supp_\sigma(w)\cup S(w,\sigma)}} j\pi_{S(w,\sigma)}(Y(w)),$$
where $\pi_{S(w,\sigma)}(Y(w))=\{gP_{S(w,\sigma)}\in P_{\supp_\sigma(w)\cup S(w,\sigma)}/P_{S(w,\sigma)}\mid g^{-1}\tau \sigma(g)\in P_{S(w,\sigma)}\cdot_\sigma IwI\}$ is a Deligne-Lusztig variety in the partial flag variety $P_{\supp_\sigma(w)\cup S(w,\sigma)}/P_{S(w,\sigma)}$.
\end{prop}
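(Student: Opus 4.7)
The plan is to deduce both statements from Lang--Steinberg applied to a suitable finite-type parahoric, using crucially that $\supp_\sigma(w)$ (respectively $\supp_\sigma(w)\cup S(w,\sigma)$) is $\Ad(\tau)\sigma$-stable by construction. First I would rewrite $w=w'\tau$ with $w'\in W_{\supp_\sigma(w)}$; since $\tau$ normalizes $I$, one has $IwI=Iw'I\cdot\tau\subseteq P_{\supp_\sigma(w)}\tau$, so the defining condition $g^{-1}\tau\sigma(g)\in IwI$ is equivalent to $g^{-1}(\Ad(\tau)\sigma)(g)\in Iw'I\subseteq P_{\supp_\sigma(w)}$, where $\Ad(\tau)\sigma$ denotes the automorphism $x\mapsto\tau\sigma(x)\tau^{-1}$ of $G(L)$. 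Because $\supp_\sigma(w)$ is $\Ad(\tau)\sigma$-stable, this automorphism preserves $P_{\supp_\sigma(w)}$ and descends to a Steinberg-type endomorphism of its connected reductive quotient.

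The key step is then Lang--Steinberg: since $W_{\supp_\sigma(w)}$ is finite, surjectivity of $h\mapsto h^{-1}(\Ad(\tau)\sigma)(h)$ on $P_{\supp_\sigma(w)}$ follows from the classical statement on the reductive quotient combined with a standard lifting through the pro-unipotent radical. Given $gI\in X_w(\tau)$, I would choose $h\in P_{\supp_\sigma(w)}$ with $h^{-1}(\Ad(\tau)\sigma)(h)=g^{-1}(\Ad(\tau)\sigma)(g)$; then a direct computation gives $(\Ad(\tau)\sigma)(gh^{-1})=gh^{-1}$, i.e.\ $j:=gh^{-1}\in\J_\tau$, while $h^{-1}\tau\sigma(h)=g^{-1}\tau\sigma(g)\in IwI$ shows $hI\in Y(w)$. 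This proves $X_w(\tau)=\J_\tau\cdot Y(w)$. Disjointness is immediate: if $j_1h_1I=j_2h_2I$ with $h_i\in P_{\supp_\sigma(w)}$, then $j_2^{-1}j_1\in\J_\tau\cap P_{\supp_\sigma(w)}$, giving the index set $\J_\tau/(\J_\tau\cap P_{\supp_\sigma(w)})$. The identification of $Y(w)$ as a classical Deligne--Lusztig variety attached to $w'$ with $\Ad(\tau)\sigma$-twisted Frobenius then comes from the isomorphism $P_{\supp_\sigma(w)}/I\cong M/B_M$ between the parahoric quotient and the finite flag variety of the reductive quotient $M$.

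For the second assertion I would repeat the entire argument with $P_{\supp_\sigma(w)}$ replaced by the larger parahoric $P_{\supp_\sigma(w)\cup S(w,\sigma)}$, whose finite-type property is precisely the consequence of \cite[Proposition~5.7]{GHN19} recalled in the excerpt. The additional inputs are the identity $P_{S(w,\sigma)}\cdot_\sigma IwI=P_{S(w,\sigma)}wP_{\sigma(S(w,\sigma))}$ and the isomorphism $\pi_{S(w,\sigma)}(X_w(\tau))\cong\pi(X_w(\tau))$, both already recorded above. Lang--Steinberg applied to $P_{\supp_\sigma(w)\cup S(w,\sigma)}$ then produces, for any $gP_{S(w,\sigma)}\in X_{S(w,\sigma),w}(\tau)$, a factorization $g=jh$ with $j\in\J_\tau$ and $hP_{S(w,\sigma)}\in\pi_{S(w,\sigma)}(Y(w))$, and the same coincidence-of-cosets bookkeeping pins down the index set $\J_\tau/(\J_\tau\cap P_{\supp_\sigma(w)\cup S(w,\sigma)})$.

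The main obstacle I expect is verifying that $\Ad(\tau)\sigma$ genuinely behaves as a Steinberg endomorphism on the reductive quotient (so that Lang--Steinberg actually applies and its conclusion lifts cleanly through the pro-unipotent radical in the mixed-characteristic setting). A secondary technicality is the parahoric bookkeeping in the second assertion: $P_{\supp_\sigma(w)}\cdot P_{S(w,\sigma)}$ need not itself be a subgroup, so all computations must take place inside the ambient $P_{\supp_\sigma(w)\cup S(w,\sigma)}$, and one must invoke the identity $P_{S(w,\sigma)}\cdot_\sigma IwI=P_{S(w,\sigma)}wP_{\sigma(S(w,\sigma))}$ at exactly the right step to ensure that the image of the projection $h\mapsto hP_{S(w,\sigma)}$ lands in the partial flag variety $P_{\supp_\sigma(w)\cup S(w,\sigma)}/P_{S(w,\sigma)}$ as stated.
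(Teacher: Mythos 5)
Your argument is correct and is essentially the standard Lang--Steinberg proof from the sources the paper itself cites for this proposition (G\"ortz--He, Proposition 2.2.1 and \S 4.1, and G\"ortz, Lemma 2.5); the paper gives no independent proof, so there is nothing different to compare. The one point you gloss over is that the $\Ad(\tau)\sigma$-stability of $\supp_\sigma(w)\cup S(w,\sigma)$ is not literally ``by construction'': it follows from $\Ad(w)\sigma(S(w,\sigma))=S(w,\sigma)$, writing $w=w'\tau$ with $w'\in W_{\supp_\sigma(w)}$, and the observation that a simple affine reflection contained in a standard parabolic subgroup $W_J$ must already lie in $J$.
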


\begin{rema}
\label{spherical rema}
Each $Y(w)$ or $\pi_{S(w,\sigma)}(Y(w))$ is irreducible and of dimension $\ell(w)$.
See \cite{DL76} and \cite{BR06}.
Clearly, each $jY(w)$ or $j\pi_{S(w,\sigma)}(Y(w))$ is closed in $X_w(\tau)$ or $\pi(X_w(\tau))$, and hence an irreducible component.
\end{rema}

\subsection{Deligne-Lusztig Reduction Method}
\label{DL method}
The following Deligne-Lusztig reduction method was established in \cite[Corollary 2.5.3]{GH10} ($\A^1$ and $\G_m$ actually mean $\A^{1,\pfn}$ and $\G_m^{\pfn}$ respectively in the mixed characteristic case).
\begin{prop}
\label{DL method prop}
Let $w\in \tW$ and let $s\in \tS$ be a simple affine reflection.
Then the following two statements hold for any $b\in G(L)$.
\begin{enumerate}[(i)]
\item If $\ell(sw\sigma(s))=\ell(w)$, then there exists a $\J_b$-equivariant universal homeomorphism $X_w(b)\rightarrow X_{sw\sigma(s)}(b)$.
\item If $\ell(sw\sigma(s))=\ell(w)-2$, then there exists a decomposition $X_w(b)=X_1\sqcup X_2$ such that
\begin{itemize}
\item $X_1$ is open and there exists a $\J_b$-equivariant morphism $X_1\rightarrow X_{sw}(b)$, which is  the composition of a Zariski-locally trivial $\G_m$-bundle and a universal homeomorphism. 
\item $X_2$ is closed and there exists a $\J_b$-equivariant morphism $X_2\rightarrow X_{sw\sigma(s)}(b)$, which is the composition of a Zariski-locally trivial $\A^1$-bundle and a universal homeomorphism. 
\end{itemize}
\end{enumerate}
\end{prop}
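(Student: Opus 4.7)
The plan is to analyze $X_w(b)$ locally along the projection $\pi_s\colon G(L)/I\to G(L)/P_s$, where $P_s=I\sqcup IsI$ is the standard parahoric attached to $s$; each fiber is (the perfection of) $\mathbb{P}^1$. Fix $xI\in G(L)/I$ and set $g=x^{-1}b\sigma(x)$. The intersection $X_w(b)\cap \pi_s^{-1}(xP_s)$ is identified with the locus $\{u\in P_s/I\mid u^{-1}g\sigma(u)\in IwI\}$, so the whole question reduces to a combinatorial one about the $\sigma$-twisted action of $P_s/I$ on $g$. Because the construction depends only on $g$, $\J_b$-equivariance under left translation on $G(L)/I$ is automatic.

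The combinatorial input is the inclusion $P_s\cdot IwI\cdot P_{\sigma(s)}\subseteq \bigcup Iw'I$ with $w'\in\{w,sw,w\sigma(s),sw\sigma(s)\}$, combined with the trichotomy $\ell(sw\sigma(s))\in\{\ell(w)-2,\ell(w),\ell(w)+2\}$ and the usual length identities for $IsI\cdot IwI$ and $IwI\cdot I\sigma(s)I$. Parametrising the big cell $IsI/I\cong \A^{1,\pfn}$ by a coordinate $t$ through the affine-root subgroup $U_{\alpha_s}$, one writes the corresponding coset $u_t\in IsI/I$ and reads off the Iwahori double coset containing $u_t^{-1}g\sigma(u_t)$ as a function of $t$ from the Bruhat decomposition of $s\cdot g\cdot \sigma(s)$.

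In case (i), $\ell(sw\sigma(s))=\ell(w)$ and exactly one of $\ell(sw),\ell(w\sigma(s))$ equals $\ell(w)+1$ with the other $\ell(w)-1$. The local analysis then shows that there is exactly one point in $\pi_s^{-1}(xP_s)$ whose associated $u^{-1}g\sigma(u)$ lies in $IwI$ (namely $xI$ itself) and exactly one whose $u^{-1}g\sigma(u)$ lies in $I(sw\sigma(s))I$; sending $xI$ to this second point defines a $\J_b$-equivariant bijective morphism $X_w(b)\to X_{sw\sigma(s)}(b)$, which on perfect schemes is a universal homeomorphism, the only remaining non-triviality coming from the Frobenius acting on the $t$-coordinate. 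In case (ii), $\ell(sw\sigma(s))=\ell(w)-2$ and $\ell(sw)=\ell(w\sigma(s))=\ell(w)-1$. I would then stratify $X_w(b)=X_1\sqcup X_2$ by whether $sg\sigma(s)$ lies in $I(sw)I$ (open condition, $X_1$) or in $I(sw\sigma(s))I$ (closed complementary condition, $X_2$). On $X_1$ the $\mathbb{P}^1$-fiber over the image in $X_{sw}(b)$ has two marked ``bad'' points, so after perfection one obtains a Zariski-locally trivial $\G_m$-bundle $X_1\to X_{sw}(b)$; on $X_2$ the fiber has one marked point, giving a Zariski-locally trivial $\A^1$-bundle $X_2\to X_{sw\sigma(s)}(b)$.

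The main obstacle is to verify Zariski-local (rather than merely \'etale-local) triviality of these bundles and to isolate correctly the universal-homeomorphism factor in mixed characteristic. Both points are settled by the explicit coordinate description of $IsI/I$: the $\mathbb{P}^1$-bundle $G(L)/I\to G(L)/P_s$ is itself Zariski-locally trivial with fiber $U_{\alpha_s}\cdot sI\sqcup \{I\}$, so restriction to $X_w(b)$ inherits this triviality, while the purely inseparable factor is precisely the $q$-th power action of $\sigma$ on the affine coordinate $t$, absorbed into the universal-homeomorphism contribution in the statement.
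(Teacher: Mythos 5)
Your argument is correct and is essentially the same as the paper's: the paper gives no proof of Proposition \ref{DL method prop}, citing \cite[Corollary 2.5.3]{GH10}, whose argument is exactly your fiberwise Bruhat-decomposition analysis along $G(L)/I\rightarrow G(L)/P_s$, and your fiber counts ($\mathbb{P}^1$ minus two marked points over $X_{sw}(b)$, minus one over $X_{sw\sigma(s)}(b)$) are right. Two small points to tighten: in (ii) the dichotomy must be phrased via the distinguished neighbor $g_1I$ (the unique coset with $g^{-1}g_1\in IsI$ and $g_1^{-1}b\sigma(g)\in IswI$), since $sg\sigma(s)$ itself can land in any of $IwI$, $IswI$, $Iw\sigma(s)I$, $Isw\sigma(s)I$, whereas $g_1^{-1}b\sigma(g_1)$ is guaranteed to lie in $IswI\sqcup Isw\sigma(s)I$; and in (i) the case where $\ell(sw)$ and $\ell(w\sigma(s))$ both drop (or both rise) forces $sw\sigma(s)=w$ by the lifting property of Coxeter groups, so the map is the identity there and your ``exactly one of the two lengths rises'' claim holds in the only nontrivial case.
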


Let $gI\in X_w(b)$.
If $\ell(sw)<\ell(w)$ (we can reduce to this case by exchanging $w$ and $sw\sigma(s)$), then let $g_1I$ denote the unique element in $G(L)/I$ such that $g^{-1}g_1\in IsI$ and $g_1^{-1}b\sigma(g)\in IswI$.
The set $X_1$ (resp.\ $X_2$) above consists of the elements $gI\in X_w(b)$ satisfying $g_1^{-1}b\sigma(g_1)\in IswI$ (resp.\ $Isw\sigma(s)I$).
All of the maps in the proposition are given as the map sending $gI$ to $g_1I$.

The following lemma is easy to prove (cf.\ \cite[Lemma 5.1]{Lansky01} and \cite[Proposition 3.16 (ii)]{BT72}).
\begin{lemm}
\label{parahoric}
Let $J\subset \tS$, $w\in {^J\tW}$ and $s\in \tS\setminus J$.
\begin{enumerate}[(i)]
\item If $s$ and $J$ commute (i.e., $ss_j=s_js$ for any $j\in J$), then $P_J sP_J=IsP_J=P_J sI$.
Moreover, the projection induces an isomorphism $IsI/I\xrightarrow{\sim} IsP_J/P_J$.
\item If $\Ad(w)\sigma(J)=J$ (and hence $w\in {^J\tW^{\sigma(J)}}$), then $P_J wP_{\sigma(J)}=IwP_{\sigma(J)}$.
Moreover, the projection induces an isomorphism $IwI/I\xrightarrow{\sim} IwP_{\sigma(J)}/P_{\sigma(J)}$.
\end{enumerate}
\end{lemm}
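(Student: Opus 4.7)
The plan is to reduce both parts to the standard BN-pair multiplication rules by Bruhat-decomposing $P_J = \bigsqcup_{v\in W_J}IvI$ (and similarly $P_{\sigma(J)}$) and repeatedly applying $IxI\cdot IyI = I(xy)I$ whenever $\ell(xy)=\ell(x)+\ell(y)$. In each part the stated hypothesis is designed precisely to supply this length additivity; once that is in hand the two equalities are immediate, and the associated isomorphism follows from $I$-equivariance together with a matching of stabilizers in $I$.

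For (i), commutativity of $s$ with $J$ means that $s$ and $J$ lie in different connected components of the Coxeter subdiagram on $J\cup\{s\}$, so $W_{J\cup\{s\}}\cong W_J\times\langle s\rangle$ and $\ell(sv)=\ell(vs)=\ell(v)+1$ for every $v\in W_J$. Combined with $sv=vs$, this gives for each $v\in W_J$ the chain $IvI\cdot sP_J \subseteq I(vs)I\cdot P_J = I(sv)I\cdot P_J \subseteq IsI\cdot P_J = IsP_J$; summing over $v$ yields $P_JsP_J \subseteq IsP_J$, with the reverse inclusion obvious and $P_JsP_J = P_JsI$ by symmetry. For $IsI/I\xrightarrow{\sim}IsP_J/P_J$ only injectivity requires argument: if $g,g'\in IsI$ satisfy $g^{-1}g'\in P_J$, then $g^{-1}g'\in P_J\cap(IsI)(IsI) = P_J\cap(I\sqcup IsI)$ by the BN-pair rule applied with $\ell(s\cdot s)=0<\ell(s)$. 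Since $s\notin J$ commutes with $J$ we have $s\notin W_J$, so $IsI\cap P_J=\emptyset$ and $g^{-1}g'\in I$.

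For (ii), I first observe that $\Ad(w)\sigma(J)=J$ gives the set equality $W_Jw=wW_{\sigma(J)}$, so $w\in{}^J\tW$ (minimal in $W_Jw$) is automatically minimal in $wW_{\sigma(J)}$, i.e.\ $w\in\tW^{\sigma(J)}$. This supplies $\ell(vw)=\ell(v)+\ell(w)$ for $v\in W_J$ and $\ell(wv')=\ell(w)+\ell(v')$ for $v'\in W_{\sigma(J)}$. For $v\in W_J$, setting $v'=w^{-1}vw\in W_{\sigma(J)}$, the two additivities give
\[
IvI\cdot wP_{\sigma(J)} \subseteq I(vw)I\cdot P_{\sigma(J)} = I(wv')I\cdot P_{\sigma(J)} \subseteq IwP_{\sigma(J)},
\]
which sums to $P_JwP_{\sigma(J)}=IwP_{\sigma(J)}$. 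For the isomorphism $IwI/I\xrightarrow{\sim}IwP_{\sigma(J)}/P_{\sigma(J)}$, suppose $g,g'\in IwI$ satisfy $g^{-1}g'\in P_{\sigma(J)}$; writing $g^{-1}g'\in Iv'I$ for some unique $v'\in W_{\sigma(J)}$ yields $g'\in IwI\cdot Iv'I = I(wv')I$, and combined with $g'\in IwI$ the disjointness of Bruhat cells forces $v'=e$, hence $g^{-1}g'\in I$.

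The only real obstacle is careful bookkeeping: extracting the correct length additivity from each hypothesis and, in (ii), keeping track of the twist $v\mapsto w^{-1}vw$ produced by $\Ad(w)\sigma(J)=J$. Once those are in place the argument is mechanical, driven entirely by BN-pair multiplication and Bruhat-cell disjointness, which is consistent with the author's assessment that the lemma is easy.
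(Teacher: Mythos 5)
Your argument is correct and is precisely the standard BN-pair computation that the paper leaves to the cited references: the length-additivity facts you extract from each hypothesis ($\ell(sv)=\ell(vs)=\ell(v)+1$ in (i), and $\ell(vw)=\ell(v)+\ell(w)$, $\ell(wv')=\ell(w)+\ell(v')$ via $W_Jw=wW_{\sigma(J)}$ in (ii)) are exactly the right ones, and the injectivity arguments via the Iwahori--Bruhat decomposition of $P_J$ and disjointness of double cosets are sound. The only point left implicit --- that bijectivity of the projection on these cells upgrades to an isomorphism of (perfect) schemes rather than merely a bijection on points --- is equally implicit in the paper's treatment, so your write-up matches the intended proof.
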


Assume that $w\in {^J\tW}$ and $\Ad(w)\sigma(J)=J$.
Let $s\in \tS\setminus J$ such that $s$ and $J$ commute.
If $\ell(sw)<\ell(w)$, then by Lemma \ref{parahoric}, the projection $(G(L)/P_J)^2\times (G(L)/P_{\sigma(J)})\rightarrow (G(L)/P_J)\times (G(L)/P_{\sigma(J)})$ to the first and third factors induces an isomorphism of varieties 
\begin{align*}
&\{(gP_J,g''P_J,g'P_{\sigma(J)})\mid g^{-1}g''\in P_JsP_J,g''^{-1}g'\in P_J swP_{\sigma (J)}\}\\\xrightarrow{\sim}&\{(gP_J,g'P_{\sigma(J)})\mid g^{-1}g'\in P_J wP_{\sigma (J)}\}.
\end{align*}
Indeed, $P_JsP_JswP_{\sigma (J)}=P_JsIswP_{\sigma (J)}=P_J wP_{\sigma(J)}$ and
$$P_J\cap(sP_Js)\cap (wP_{\sigma(J)}w^{-1})=P_J\cap (wP_{\sigma(J)}w^{-1}).$$
This isomorphism restricts to an isomorphism $$\{(gP_J,g_1P_J,b\sigma(g)P_{\sigma(J)})\mid g^{-1}g_1\in P_JsP_J,g_1^{-1}b\sigma(g)\in P_J swP_{\sigma (J)}\}\xrightarrow{\sim} X_{J,w}(b).$$
Again by Lemma \ref{parahoric}, we have
\begin{align*}
P_{J} swP_{\sigma(J)}\sigma(s)P_{\sigma(J)}&=P_JswI\sigma(s)P_{\sigma(J)}\\
&=
\begin{cases}
P_Jsw\sigma(s)P_{\sigma(J)} & \text{if}\ \ell(sw\sigma(s))>\ell(sw)\\
P_JswP_{\sigma(J)}\sqcup P_Jsw\sigma(s)P_{\sigma(J)} & \text{if}\ \ell(sw\sigma(s))<\ell(sw).
\end{cases}
\end{align*}
So the same proof as \cite[Corollary 2.5.3]{GH10} shows the following proposition.
\begin{prop}
\label{DL method prop parahoric}
Let $w\in {^J\tW}$ with $\Ad(w)\sigma(J)=J$.
Let $s\in \tS\setminus J$ such that $s$ and $J$ commute.
Then $sw,sw\sigma(s)\in {^J\tW}$, $\Ad(sw)\sigma(J)=J$ and $\Ad(sw\sigma(s))\sigma(J)=J$.
Moreover, the following two statements hold for any $b\in G(L)$.
\begin{enumerate}[(i)]
\item If $\ell(sw\sigma(s))=\ell(w)$, then there exists a $\J_b$-equivariant universal homeomorphism $X_{J,w}(b)\rightarrow X_{J,sw\sigma(s)}(b)$.
\item If $\ell(sw\sigma(s))=\ell(w)-2$, then there exists a decomposition $X_{J,w}(b)=X_1\sqcup X_2$ such that
\begin{itemize}
\item $X_1$ is open and there exists a $\J_b$-equivariant morphism $X_1\rightarrow X_{J,sw}(b)$, which is  the composition of a Zariski-locally trivial $\G_m$-bundle and a universal homeomorphism. 
\item $X_2$ is closed and there exists a $\J_b$-equivariant morphism $X_2\rightarrow X_{J,sw\sigma(s)}(b)$, which is the composition of a Zariski-locally trivial $\A^1$-bundle and a universal homeomorphism. 
\end{itemize}
\end{enumerate}
\end{prop}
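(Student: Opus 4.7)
The plan is to perform the parahoric version of the Görtz-He Deligne-Lusztig reduction \cite[Corollary 2.5.3]{GH10}; the geometric scaffolding—the reformulation of $X_{J,w}(b)$ as a set of pairs $(gP_J,g_1P_J)$ and the product formula for $P_JswP_{\sigma(J)}\sigma(s)P_{\sigma(J)}$—is already set up in the paragraphs preceding the proposition, so the remaining tasks are to verify the combinatorial claims and to import the reduction argument.

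I first dispense with the combinatorial claims. Since $s$ commutes with every $s_j\in J$, $\Ad(s)J=J$ pointwise, so
$$\Ad(sw)\sigma(J)=s\,\Ad(w)\sigma(J)\,s^{-1}=sJs^{-1}=J;$$
likewise $\Ad(sw\sigma(s))\sigma(J)=\Ad(sw)(\sigma(s)\sigma(J)\sigma(s)^{-1})=\Ad(sw)\sigma(J)=J$, using that $\sigma(s)$ commutes with $\sigma(J)$. For $sw\in{}^J\tW$, the root-positivity criterion gives $(sw)^{-1}\alpha_j=w^{-1}(s\alpha_j)=w^{-1}\alpha_j>0$ for all $j\in J$, because $s$ commuting with $s_j$ forces $s\alpha_j=\alpha_j$. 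For $sw\sigma(s)\in{}^J\tW$, the identity $\Ad(w)\sigma(J)=J$ gives $w^{-1}\alpha_j=\sigma(\alpha_{j'})$ for some $j'\in J$, and then $(sw\sigma(s))^{-1}\alpha_j=\sigma(s)(w^{-1}\alpha_j)=\sigma(s\alpha_{j'})=\sigma(\alpha_{j'})>0$ since $s$ commutes with $s_{j'}$.

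For the geometric assertions, substituting $\sigma(g)=\sigma(g_1)\sigma(g_1^{-1}g)$ in the defining condition of the model for $X_{J,w}(b)$ rewrites it as $g_1^{-1}b\sigma(g_1)\in P_JswP_{\sigma(J)}\sigma(s)P_{\sigma(J)}$, which by the product formula equals $P_Jsw\sigma(s)P_{\sigma(J)}$ in case (i) (after swapping $w\leftrightarrow sw\sigma(s)$ to arrange $\ell(sw)=\ell(w)-1$) and $P_JswP_{\sigma(J)}\sqcup P_Jsw\sigma(s)P_{\sigma(J)}$ in case (ii). The projection $\varphi:(gP_J,g_1P_J)\mapsto g_1P_J$ therefore lands in $X_{J,sw\sigma(s)}(b)$ in case (i) and in $X_{J,sw}(b)\sqcup X_{J,sw\sigma(s)}(b)$ in case (ii). The fiber of $\varphi$ over a fixed $g_1P_J$ lives inside $g_1\cdot P_JsP_J/P_J\cong IsI/I\cong\A^1$ by Lemma \ref{parahoric}(i); using Lemma \ref{parahoric}(ii), applicable thanks to $\Ad(sw)\sigma(J)=\Ad(sw\sigma(s))\sigma(J)=J$, the defining condition on the fiber reduces to the Iwahori-level condition analyzed in \cite[Corollary 2.5.3]{GH10}. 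This yields a singleton fiber (hence a universal homeomorphism) in case (i), a Zariski-locally trivial $\G_m$-bundle on the open stratum $X_1=\varphi^{-1}(X_{J,sw}(b))$ in case (ii), and a Zariski-locally trivial $\A^1$-bundle on the closed stratum $X_2=\varphi^{-1}(X_{J,sw\sigma(s)}(b))$. The $\J_b$-equivariance is automatic since $\J_b$ acts by left multiplication on every factor.

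The main technical point will be the rigorous reduction of the parahoric-level fiber condition to its Iwahori-level counterpart, specifically verifying that the isomorphism $IsI/I\cong P_JsP_J/P_J$ supplied by Lemma \ref{parahoric}(i) intertwines the two conditions on either side, and that the resulting map $gP_J\mapsto g_1P_J$ is independent of the Iwahori lift used to define it (this is where the commutation of $s$ with $J$ and the identity $\Ad(w)\sigma(J)=J$ enter in an essential way). Once this is in hand, the Zariski-local triviality and the bundle structures descend directly from the Iwahori-level proof of \cite[Corollary 2.5.3]{GH10}, which trivializes the bundles by choosing sections of the root-subgroup parametrization of $IsI/I$.
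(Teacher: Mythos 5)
Your proposal is correct and follows essentially the same route as the paper: the paper's proof consists precisely of the graph-variety model $\{(gP_J,g_1P_J)\}$, the double-coset product formulas derived from Lemma \ref{parahoric}, and the declaration that ``the same proof as [GH10, Corollary 2.5.3]'' then applies, which is exactly the structure of your argument. The only difference is that you additionally spell out the combinatorial assertions ($sw,sw\sigma(s)\in{}^J\tW$ and the $\Ad$-stability of $J$), which the paper states without proof; your verifications of these are valid.
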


Let $gP_J\in X_{J,w}(b)$.
If $\ell(sw)<\ell(w)$, then all of the maps in the proposition are given as the map sending $gP_J$ to $g_1P_J$.
In \S\ref{geometric structure}, we will use Proposition \ref{DL method prop parahoric} only in the situation where $J=S(w,\sigma)$.

We say that a scheme $X$ is an {\it iterated fibration} of rank $a$ over a scheme $Y$ (whose fibers are all $\A^1$) if there exist morphisms
$$X=Y_0\rightarrow Y_1\rightarrow \cdots \rightarrow Y_a=Y$$
such that $Y_{i}$ is a Zariski-locally trivial $\A^1$-bundle over $Y_{i+1}$ for any $0\le i<a$.

\begin{rema}
The perfection of a universal homeomorphism is an isomorphism.
\end{rema}

\subsection{Length Positive Elements}
\label{LP}
We denote by $\delta^+$ the indicator function of the set of positive roots, i.e.,
$$\delta^+\colon \Phi\rightarrow \{0,1\},\quad \alpha \mapsto
\begin{cases}
1 & (\alpha\in \Phi_+) \\
0 & (\alpha\in \Phi_-).
\end{cases}$$
Note that any element $w\in \tW$ can be written in a unique way as $w=x\vp^\mu y$ with $\mu$ dominant, $x,y\in W_0$ such that $\vp^\mu y\in \SW$.
We have $p(w)=xy$ and $\ell(w)=\ell(x)+\la\mu, 2\rho\ra-\ell(y)$.
We define the set of {\it length positive} elements by $$\LP(w)=\{v\in W_0\mid \la v\alpha,y^{-1}\mu\ra+\delta^+(v\alpha)-\delta^+(xyv\alpha)\geq 0\  \text{for all $\alpha\in \Phi_+$}\}.$$
Then we always have $y^{-1}\in \LP(w)$.
Indeed, $y$ is uniquely determined by the condition that
$\la\alpha, \mu\ra\geq \delta^+(-y^{-1}\alpha)\ \text{for all $\alpha\in \Phi_+$}$.
Since $\delta^+(\alpha)+\delta^+(-\alpha)=1$, we have $$\la y^{-1}\alpha, y^{-1}\mu\ra+\delta^+(y^{-1}\alpha)-\delta^+(x\alpha)=\la \alpha,\mu\ra-\delta^+(-y^{-1}\alpha)+\delta^+(-x\alpha)\geq 0.$$

For any $w=x\vp^\mu y\in \tW$ as above, we define $$\Phi_w\coloneqq\{\alpha\in \Phi_+\mid \la\alpha,\mu\ra-\delta^-(y^{-1}\alpha)+\delta^-(x\alpha)=0\}.$$
Here $\delta^-$ denotes the indicator function of the set of negative roots. Set
$$R(w)=\{r^{-1}\in W_0\mid r(\Phi_+\setminus \Phi_w)\subset \Phi_+\ \text{or equivalently,}\ r^{-1}\Phi_+\subset \Phi_+\cup-\Phi_w\}.$$
Then we have $y\LP(w)=R(w)$.
See \cite[Lemma 2.8]{Shimada4}.

Thanks to Kottwitz \cite{Kottwitz85}, a $\sigma$-conjugacy class $[b]$ of $b\in G(L)$ is uniquely determined by two invariants: the Kottwitz point $\kappa(b)\in \pi_1(G)/((1-\sigma)\pi_1(G))$ and the Newton point $\nu_b\in X_*(T)_{\Q,+}$.
Clearly, $X_w(b)=\emptyset$ if $\kappa(b)\neq\kappa(\dot w)$.
We say that $b\in G(L)$ is basic if $\nu_b$ is central.
The following theorem is a refinement of the non-emptiness criterion in \cite{GHN15}, which is conjectured by Lim in \cite{Lim23} and proved by Schremmer in \cite[Proposition 5]{Schremmer23}.
\begin{theo}
\label{empty}
Assume that the Dynkin diagram of $G$ is $\sigma$-connected, i.e., $\sigma$ acts transitively on the set of irreducible components of $\Phi$.
Let $b\in G(L)$ be a basic element with $\kappa(b)=\kappa(\dot w)$.
Then $X_w(b)=\emptyset$ if and only if the following two conditions are satisfied:
\begin{enumerate}[(i)]
\item $|W_{\supp_\sigma(w)}|$ is infinite.
\item There exists $v\in \LP(w)$ such that $\supp_\sigma(\sigma^{-1}(v)^{-1}p(w)v)\subsetneq S$.
\end{enumerate}
\end{theo}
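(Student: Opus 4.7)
The plan is to prove both directions of the equivalence by reducing to minimal length elements in the $\sigma$-conjugacy class of $w$ via the Deligne-Lusztig reduction method (Proposition \ref{DL method prop}). First I would verify that both sides are invariant under the elementary step $w \xrightarrow{s} w'$ for $s \in \tS$. On the geometric side this is immediate from Proposition \ref{DL method prop}: when $\ell(sw\sigma(s)) = \ell(w)$ one has a universal homeomorphism (so $X_w(b) = \emptyset$ iff $X_{sw\sigma(s)}(b) = \emptyset$), and when $\ell(sw\sigma(s)) < \ell(w)$ the decomposition $X_w(b) = X_1 \sqcup X_2$ fibered over $X_{sw}(b)$ and $X_{sw\sigma(s)}(b)$ gives that $X_w(b) = \emptyset$ iff both are empty. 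On the combinatorial side, one must track how $\supp_\sigma(w)$ and $\LP(w)$ behave under the $\sigma$-twisted conjugation $w \mapsto sw\sigma(s)$; the key technical point is showing that the conjunction of (i) and (ii) is invariant (or fails consistently) under such steps.

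After this invariance, I may assume $w$ is of minimal length in its $\sigma$-conjugacy class. By He's classification, such $w$ is $P$-fundamental: a $\sigma$-conjugate of $w$ lies in $\tW_J$ for some $J \subseteq \tS$ with $W_J$ finite, and the Newton point $\nu_w$ is readable from the translation part. Since $b$ is basic with $\kappa(b) = \kappa(\dot w)$, nonemptiness of $X_w(b)$ is equivalent to $\nu_w = \nu_b$, i.e.\ $\nu_w$ being central in $G$. In this setting, if (i) fails then Proposition \ref{spherical} produces a point directly; if (i) holds but (ii) fails, the assumption that every length-positive $v$ yields full $\sigma$-support forces the translation part of $w$ to project nontrivially onto every simple root direction, making $\nu_w$ central and producing a point of $X_w(b)$. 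Conversely, if both (i) and (ii) hold, condition (ii) exhibits (after the twist by $v$) a proper standard Levi subgroup containing the $\sigma$-support of $p(w)$, so the translation part of $w$ must be nontrivial along the central character of that Levi; combined with (i), this forces $\nu_w$ to be non-central and hence $X_w(b) = \emptyset$.

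The principal obstacle will be the combinatorial invariance of condition (ii) under the reduction $\to$. The set $\LP(w)$ is defined through a delicate interplay between the factorization $w = x\vp^\mu y$ with $\vp^\mu y \in \SW$ and the finite Weyl group $W_0$, and this factorization transforms nontrivially under $w \mapsto sw\sigma(s)$. Given $v \in \LP(w)$ with $\supp_\sigma(\sigma^{-1}(v)^{-1} p(w) v) \subsetneq S$, producing a corresponding $v' \in \LP(sw\sigma(s))$ (or $\LP(sw)$) with the same strict-support property requires a case analysis on which of the defining inequalities of $\LP(w)$ become tight and on how $s$ relates to the underlying positive roots. Once this invariance is established, the rest of the argument is the structural analysis of minimal length elements and the Hodge-Newton-type translation into the Newton polygon, which is well-developed in \cite{GHN15}.
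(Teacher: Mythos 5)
The paper does not actually prove this statement: it is quoted as a known result, conjectured by Lim and proved by Schremmer in \cite[Proposition 5]{Schremmer23} as a refinement of the non-emptiness criterion of \cite{GHN15}. Schremmer's argument is essentially a combinatorial translation — one shows that condition (ii) on length-positive elements is equivalent to $w$ being a $(J,v,\sigma)$-alcove element for suitable proper data, and then invokes the Hodge--Newton decomposition and the emptiness/non-emptiness theorem of G\"ortz--He--Nie. Your proposal instead attempts a from-scratch proof by Deligne--Lusztig reduction, which is a genuinely different (and much harder) route.

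As written, your argument has a genuine gap at its core. First, the reduction step branches: when $\ell(sw\sigma(s))=\ell(w)-2$, emptiness of $X_w(b)$ is equivalent to emptiness of \emph{both} $X_{sw}(b)$ and $X_{sw\sigma(s)}(b)$, and $sw$ is not $\sigma$-conjugate to $w$. So you cannot ``assume $w$ is of minimal length in its $\sigma$-conjugacy class''; you must control the entire reduction tree, whose leaves lie in many different $\sigma$-conjugacy classes, and you must show that the conjunction of (i) and (ii) propagates correctly along \emph{both} branches. This compatibility of the length-positive condition with the reduction tree is precisely the content of the theorem, and you explicitly defer it as ``the principal obstacle'' without offering a mechanism for it (the transformation of the factorization $w=x\vp^\mu y$ under $w\mapsto sw$ versus $w\mapsto sw\sigma(s)$ behaves quite differently, and $\LP$ is not simply permuted). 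Second, the endgame for minimal-length elements is not correct as stated: condition (ii) constrains the finite part $p(w)$ twisted by $v\in\LP(w)$, not the translation part, and the emptiness direction does not follow from ``$\nu_w$ is non-central'' — the correct mechanism is the Hodge--Newton reduction to a proper Levi $M$ together with a Kottwitz-point computation in $\pi_1(M)$, which is what the P-alcove formalism of \cite{GHN15} supplies. Without either establishing the tree-compatibility or the equivalence with the P-alcove condition, the proposal does not constitute a proof.
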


We say that $w$ is of {\it positive Coxeter type} if $\sigma^{-1}(v)^{-1}p(w)v$ is a $\sigma$-Coxeter element for some $v\in \LP(w)$.
Affine Deligne-Lusztig varieties associated to elements of positive Coxeter type were studied in a joint work \cite{SSY23} with Schremmer and Yu.

We will use Theorem \ref{empty} for $\vp^\mu y\in \SW$ as above.
In this case, the condition (ii) is equivalent to saying that there exists $r^{-1}\in R(\vp^\mu y)$ such that $\supp_\sigma(ry\sigma(r)^{-1})\subsetneq S$.
In particular, (ii) is satisfied if $\supp_\sigma(y)\subsetneq S$.

\section{Non-Empty Ekedahl-Oort Strata}
\label{non-empty section}
We assume that $n\geq 2$.
\subsection{Setting}
\label{setting}
Let $F_2$ be the quadratic unramified extension of $F$.
Let $\cO_{F_2}$ denote the ring of integers of $F_2$.
We put $\Lambda=\cO_{F_2}^n$ equipped with the hermitian form
$$\Lambda\times \Lambda\rightarrow \cO_{F_2},\quad ((a_i)_{1\le i\le n},(b_i)_{1\le i\le n})\mapsto \sum_{i=1}^{n}\sigma(a_i)b_{n+1-i}.$$
From now on, we set $G=\mathrm{GU}(\Lambda)$.
By taking the first factor of the isomorphism
$$\cO_{F_2}\otimes_{\cO_F}\cO_{F_2}\cong \cO_{F_2}\times \cO_{F_2},\quad a\otimes b\mapsto (ab,a\sigma(b)),$$
we obtain an isomorphism $G_{\cO_{F_2}}\cong \GL_n\times \G_m$.
Let $T\subset B\subset G$ be the maximal torus and the Borel subgroup determined by the diagonal torus and the upper triangular subgroup of $\GL_n$ under this isomorphism.
Then $\Y$ can be identified with $\Z^n\times \Z$ and $\tW=\tW_G\cong \tW_{\GL_n}\times \Z$.
Under this identification, we have $\sigma(((m_i)_{1\le i\le n},0))=((-m_{n+1-i})_{1\le i\le n},0)\in \Y$ and $\sigma((s_i,0))=(s_{n-i},0)\in \tW$ by setting $s_n=s_0$.

In the sequel, we set $\mu=((0^{(n-2)},-1,-1),-1)$ and $b=\vp^{((0^{(n)}),-1)}$.
Then $b$ is a central basic element with $\kappa(b)=\kappa(\vp^\mu)$ (cf.\ \cite[\S 5.2]{TY22}).
By abuse of notation, we write $s_i$ for $(s_i,0)\in \tW$.
For integers $1\le k, l\le n$, we set 
$$s_{[k,l]}=\begin{cases}
s_ks_{k-1}\cdots s_l & \text{if $k\geq l$} \\
1 & \text{otherwise}
\end{cases}$$
and $w_{k,l}=\vp^\mu s_{[n-2,k]}s_{[n-1,l]}$.
Then $\SAdm(\mu)=\{w_{k,l}\mid 1\le k<l\le n\}$.
In particular, $\tau\coloneqq w_{1,2}$ is the length $0$ element corresponding to $\mu$.
It is straightforward to check that $\ell(w_{k,l})=k+l-3$ and $$\Phi_+\setminus \Phi_{w_{k,l}}=\{\chi_{1,n-1},\chi_{2,n-1},\ldots,\chi_{k-1,n-1},\chi_{1,n},\chi_{2,n},\ldots,\chi_{l-2,n}\}.$$

Set $\tau_1\coloneqq \vp^{((1,0^{(n-1)}),0)}s_1s_2\cdots s_{n-1}\in \Omega$.
Then $\tau_1^{-1}b\sigma(\tau_1)=\tau$.
We define $\SAdm(\mu)_{\neq \emptyset}\coloneqq\{w\in \SAdm(\mu)\mid X_w(b)\neq \emptyset\}$ and $\SAdm(\mu)_{\DL}\coloneqq\{w\in \SAdm(\mu)\mid \supp_\sigma(w)\neq \tS\}$.
By Theorem \ref{empty}, we have $\SAdm(\mu)_{\DL}\subseteq \SAdm(\mu)_{\neq \emptyset}$.
We denote $\SAdm(\mu)_{\neq \emptyset}\setminus\SAdm(\mu)_{\DL}$ by $\SAdm(\mu)_{\neq\DL}$.

\subsection{Computation in the Affine Weyl Group}
If $n$ is odd, then $\sigma$-orbits on $\tS$ are
$$\{s_0\},\{s_1,s_{n-1}\},\{s_2,s_{n-2}\},\ldots, \{s_{\frac{n-1}{2}}, s_{\frac{n+1}{2}}\}.$$
Since $\tau s_i \tau^{-1}=s_{i-2}$, $\tau\sigma$-orbits on $\tS$ are
$$\{s_{n-1}\},\{s_0,s_{n-2}\},\{s_1,s_{n-3}\},\ldots, \{s_{\frac{n-3}{2}}, s_{\frac{n-1}{2}}\}.$$
If $n$ is even, then $\sigma$-orbits on $\tS$ are
$$\{s_0\},\{s_1,s_{n-1}\},\{s_2,s_{n-2}\},\ldots, \{s_{\frac{n}{2}-1}, s_{\frac{n}{2}+1}\}, \{s_{\frac{n}{2}}\}.$$
Since $\tau s_i \tau^{-1}=s_{i-2}$, $\tau\sigma$-orbits on $\tS$ are
$$\{s_{n-1}\},\{s_0,s_{n-2}\},\{s_1,s_{n-3}\},\ldots, \{s_{\frac{n}{2}-2}, s_{\frac{n}{2}}\},\{s_{\frac{n}{2}-1}\}.$$
Set $t_i=s_is_{n-i}(=s_{n-i}s_i$ unless $\frac{n-1}{2}\le i\le \frac{n+1}{2}$).
We simply write $\equiv$ for $\equiv$ (mod $2$).
\begin{lemm}
\label{empty lemm}
Assume that $1\le k<l\le n$ satisfies one of the following conditions:
\begin{enumerate}[(i)]
\item $\frac{n+2}{2}\le k<l\le n$.
\item $l\equiv n$, $\frac{n+3}{2}\le l$ and $n-l+2\le k\le \frac{n+1}{2}$.
\item $k$ is even, $k\le \frac{n-1}{2}$ and $\frac{n+3}{2}\le l\le n-k+2$.
\end{enumerate}
Then $X_{w_{k,l}}(b)=\emptyset$.
\end{lemm}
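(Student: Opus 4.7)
The plan is to apply the non-emptiness criterion of Theorem \ref{empty}, which is applicable because $b$ is central basic with $\kappa(b)=\kappa(\dot w_{k,l})$ (automatic for $w_{k,l}\in\SAdm(\mu)$) and $\Phi$ is of type $A_{n-1}$, hence $\sigma$-connected. Write $y=p(w_{k,l})=s_{[n-2,k]}s_{[n-1,l]}\in W_0$ throughout. Both conditions of the criterion need to be verified.

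\emph{Condition (i): $\supp_\sigma(w_{k,l})=\tS$.} Starting from the length-zero identity $\tau=w_{1,2}=\vp^\mu s_{[n-2,1]}s_{[n-1,2]}$, the telescoping cancellations $s_{[n-2,1]}^{-1}s_{[n-2,k]}=s_1 s_2\cdots s_{k-1}$ and $s_{[n-1,2]}^{-1}s_{[n-1,l]}=s_2 s_3\cdots s_{l-1}$, combined with the fact that $s_1 s_2\cdots s_{k-1}$ commutes with $s_{[n-1,l]}$ (their supports $\{s_1,\ldots,s_{k-1}\}$ and $\{s_l,\ldots,s_{n-1}\}$ are disjoint since $l\geq k+1$), yield the reduced expression $w_{k,l}=\tau\cdot(s_2 s_3\cdots s_{l-1})\cdot(s_1 s_2\cdots s_{k-1})$. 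Rewriting as $(w_{k,l}\tau^{-1})\cdot\tau$ and using $\tau s_i\tau^{-1}=s_{i-2}$ (indices mod $n$), the $W_a$-part $w_{k,l}\tau^{-1}$ has support $\{s_0,\ldots,s_{l-3}\}\cup\{s_{n-1}\}$ (the latter present when $k\geq 2$). Its $\tau\sigma$-closure under $s_i\mapsto s_{n-i-2}$ is $\{s_0,\ldots,s_{l-3}\}\cup\{s_{n-l+1},\ldots,s_{n-1}\}$, which exhausts $\tS$ precisely when $l>(n+2)/2$; this holds in every case of the lemma (case (i) via $l>k\geq(n+2)/2$, cases (ii) and (iii) directly by $l\geq(n+3)/2$).

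\emph{Condition (ii).} Using the equivalent formulation from the end of \S\ref{LP}, it suffices to exhibit $r\in W_0$ with $r^{-1}\in R(w_{k,l})$ and $\supp_\sigma(ry\sigma(r)^{-1})\subsetneq S$. The explicit formula $\Phi_+\setminus\Phi_{w_{k,l}}=\{\chi_{i,n-1}\}_{i<k}\cup\{\chi_{i,n}\}_{i<l-1}$ translates $r^{-1}\in R(w_{k,l})$ into the concrete inequalities $r(n-1)>r(1),\ldots,r(k-1)$ and $r(n)>r(1),\ldots,r(l-2)$. In one-line notation, a direct computation shows that $y$ fixes $\{1,\ldots,k-1\}$, sends $k\mapsto n-1$, shifts $\{k+1,\ldots,l-1\}$ down by one, sends $l\mapsto n$, and shifts $\{l+1,\ldots,n\}$ down by two. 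In case (i) with $k\geq(n+2)/2$, take $r=1$: then $\supp_\sigma(y)$ omits the nonempty band $\{s_{n-k+1},\ldots,s_{k-1}\}$ and is a proper subset of $S$. The same trivial choice $r=1$ already works in cases (ii) and (iii) when $l=n$, since then $\supp(y)\subseteq\{s_k,\ldots,s_{n-2}\}$ avoids both $s_1$ and $s_{n-1}$, so its $\sigma$-closure is still proper.

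The remaining situations are cases (ii) and (iii) with $l<n$, where one needs a genuinely nontrivial $r$. The right choice is a short product of simple reflections supported in a specific band near the middle of the Dynkin diagram, depending on $(k,l,n)$ and on parity; its role is to relocate the two ``large'' values $n-1,n$ inside $y$ so that after the twisted conjugation $y\mapsto ry\sigma(r)^{-1}$ the resulting permutation has $\sigma$-support omitting a full $\sigma$-orbit on $S$. Both the membership $r^{-1}\in R(w_{k,l})$ (via the inequalities above) and the properness of the resulting $\sigma$-support are then routine direct calculations with the one-line notation of $y$. The main obstacle is identifying the correct $r$ in each sub-case, which requires splitting according to the sign of $k+l-(n+2)$ and the parity of $k$ (equivalently of $l$ modulo $n$); once the form of $r$ is proposed, verification is mechanical.
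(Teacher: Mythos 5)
Your overall framework is exactly the paper's: invoke Theorem \ref{empty}, check that $\supp_\sigma(w_{k,l})=\tS$ in all three cases, and then exhibit $r$ with $r^{-1}\in R(w_{k,l})$ and $\supp_\sigma(ry\sigma(r)^{-1})\subsetneq S$ using the reformulation at the end of \S\ref{LP}. Your verification of $\supp_\sigma(w_{k,l})=\tS$ is correct (and matches the paper's), your treatment of case (i) via $r=1$ is correct, and the degenerate sub-cases $l=n$ are handled correctly.

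However, there is a genuine gap: for cases (ii) and (iii) with $l<n$ you never actually produce the element $r$, and this is the substantive content of the lemma. Writing that the right $r$ is ``a short product of simple reflections supported in a specific band near the middle of the Dynkin diagram'' whose identification is ``the main obstacle'' is a description of the problem, not a solution to it. The paper's proof is almost entirely devoted to this point: in case (ii) it takes $r=(t_{\frac{n-l}{2}+1})\cdots(t_3t_4\cdots t_{n-l-1})(t_2t_3\cdots t_{n-l})$ with $t_i=s_is_{n-i}$, and in case (iii) an analogous but more involved product (with a separate argument for $k=2$ and a two-stage element $r'$, $r$ for $k\geq 4$), and then carries out a nontrivial telescoping computation, using identities such as $s_{[n-1-i,l-1+i]}^2=s_{[n-2-i,l-1+i]}s_{[n-1-i,l+i]}$, to show that $ry\sigma(r)^{-1}$ collapses to an element like $s_{[\frac{n+l}{2}-2,k]}s_{[n-1,\frac{n+l}{2}]}^{-1}$ whose $\sigma$-support is visibly proper. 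Neither the existence of such an $r$ nor the claim that ``once the form of $r$ is proposed, verification is mechanical'' can be taken for granted: the choice of $r$ must simultaneously satisfy the $R(w_{k,l})$ inequalities (which constrain $r(n)$, $r(n-1)$ and $r$ on $\{1,\dots,l-2\}$) and produce the support collapse, and the parity hypotheses in (ii) and (iii) enter precisely to make the iteration terminate at a proper subset. As written, your argument establishes the lemma only in case (i) and in the boundary sub-cases $l=n$.
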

\begin{proof}
Note that $w_{k,l}=\tau (s_2s_3\cdots s_{l-1})(s_1s_2\cdots s_{k-1})=(s_0s_1\cdots s_{l-3})(s_{n-1}s_0\cdots s_{k-3})\tau$.
It is straightforward to check that if $\frac{n+2}{2}\le k<l\le n$, then $\supp_\sigma(w_{k,l})=\tS$ and $\supp_\sigma(s_{[n-2,k]}s_{[n-1,l]})\subsetneq S$.
Thus if (i) holds, then $X_{w_{k,l}}(b)=\emptyset$ by Theorem \ref{empty}.

Assume that (ii) holds.
Then by $\frac{n+3}{2}\le l\le n$ and hence $k\geq 2$, we have $\supp_\sigma(w_{k,l})=\tS$.
We define $r\coloneqq(t_{\frac{n-l}{2}+1})\cdots (t_3t_4\cdots t_{n-l-1})(t_2t_3\cdots t_{n-l})$ ($r=1$ if $l=n$).
Then $r(n)=n, r(n-1)=\frac{n+l}{2}-1$ and $r\{1,2,\ldots, k-1\}=\{1,2,\ldots, k-1\}$ by $n-l+2\le k$.
So we have $r^{-1}\in R(w_{k,l})$ by $k\le \frac{n+1}{2}< \frac{n+l}{2}$.
It follows from $n-l+2\le k$ that
\begin{align*}
&(t_2t_3\cdots t_{n-l})s_{[n-2,k]}s_{[n-1,l]}\sigma(t_2t_3\cdots t_{n-l})^{-1}=s_{[n-2,l]}s_{[n-2,k]}s_{n-1}.
\end{align*}
It follows from $s_{[n-2,l]}^2=(s_{n-3}s_{n-2})(s_{n-4}s_{n-3})\cdots (s_ls_{l+1})=s_{[n-3,l]}s_{[n-2,l+1]}$ and $n-l+2\le k$ that
\begin{align*}
&(t_3t_4\cdots t_{n-l-1})s_{[n-2,l]}s_{[n-2,k]}s_{n-1}\sigma(t_3t_4\cdots t_{n-l-1})^{-1}=s_{[n-3,l+1]}s_{[n-3,k]}s_{[n-1,n-2]}^{-1}.
\end{align*}
Similarly, for $1\le i\le \frac{n-l}{2}-1$, we have $s_{[n-1-i, l-1+i]}^2=s_{[n-2-i,l-1+i]}s_{[n-1-i,l+i]}$ and
\begin{align*}
&(t_{2+i}\cdots t_{n-l-i})s_{[n-1-i, l-1+i]}s_{[n-1-i,k]}s_{[n-1,n-i]}^{-1}\sigma(t_{2+i}\cdots t_{n-l-i})^{-1}\\
=&(t_{2+i}\cdots t_{n-l-i})s_{[n-1-i, l-1+i]}^2 \sigma(t_{2+i}\cdots t_{n-l-i})^{-1}s_{[l-2+i,k]}s_{[n-1,n-i]}^{-1}\\
=&s_{[n-2-i, l+i]}s_{[n-2-i,k]}s_{[n-1,n-1-i]}^{-1}.
\end{align*}
Hence we deduce that $$rs_{[n-2,k]}s_{[n-1,l]}\sigma(r)^{-1}=s_{\frac{n+l}{2}-1}s_{[\frac{n+l}{2}-1,k]}s_{[n-1,\frac{n+l}{2}]}^{-1}=s_{[\frac{n+l}{2}-2,k]}s_{[n-1,\frac{n+l}{2}]}^{-1}.$$
Thus $\supp_\sigma(rs_{[n-2,k]}s_{[n-1,l]}\sigma(r)^{-1})\subsetneq S$ by $\frac{n+1}{2}\le \frac{n+l}{2}-1$ and $\frac{n-l}{2}+1<n-l+2\le k$.
Therefore $X_{w_{k,l}}(b)=\emptyset$ by Theorem \ref{empty}.

Assume that (iii) holds.
Then by $l\geq\frac{n+3}{2}$ and $k\geq 2$, we have $\supp_\sigma(w_{k,l})=\tS$.
If $k=2$, then $s_{n-2}\in R(w_{2,l})$, $\supp_\sigma(s_{n-2}s_{[n-2,2]}s_{[n-1,l]}\sigma(s_{n-2})^{-1})\subsetneq S$ and hence $X_{w_{k,l}}(b)=\emptyset$ by Theorem \ref{empty}.
So we may assume that $k\geq 4$.
We define 
\begin{align*}
r'&\coloneqq (t_{l}\cdots t_{l+k-6}t_{l+k-4})\cdots s_{n-k-1}(t_{n-k+1}\cdots t_{n-5}t_{n-3})s_{n-k}(t_{n-k+2}\cdots t_{n-4}t_{n-2})\\
r&\coloneqq (t_{l+\frac{k}{2}-2})\cdots (t_{l+2}\cdots t_{l+k-8}t_{l+k-6})(t_{l+1}\cdots t_{l+k-7}t_{l+k-5})r'\quad (k\geq 6)
\end{align*}
for even $k$ ($r'=t_{n-k+2}\cdots t_{n-4}t_{n-2}$ if $l=n-k+2$).
Then 
\begin{align*}
&r'(n)=r(n)=n,\quad r'(n-1)=l+k-4,\quad r(n-1)=l+\tfrac{k}{2}-2,\\
&r'\{1,2,\ldots, l-2\}=r\{1,2,\ldots, l-2\}=\{1,2,\ldots, l-2\}.
\end{align*}
Indeed, both $\supp(r')$ and $\supp(r)$ do not contain $s_{l-2}$ by $n-l\le \frac{n-3}{2}<l-2$.
So we have $r'^{-1},r^{-1}\in R(w_{k,l})$.
It follows from $4\le k\le \frac{n-1}{2}$ and $\frac{n+3}{2}\le l\le n-k+2$ that
\begin{align*}
&(t_{n-k+2}\cdots t_{n-4}t_{n-2})s_{[n-2,k]}s_{[n-1,l]}\sigma(t_{n-k+2}\cdots t_{n-4}t_{n-2})^{-1}\\
=&(t_{n-k+2}\cdots t_{n-4})s_{[n-3,k]}s_{[n-1,l]}s_{n-2}\sigma(t_{n-k+2}\cdots t_{n-4})^{-1}\\
=&(t_{n-k+2}\cdots t_{n-4})s_{n-4}s_{[n-3,k]}s_{[n-1,l]}\sigma(t_{n-k+2}\cdots t_{n-4})^{-1}\\
=&\cdots\\
=&s_{[n-3,k]}s_{[n-1,l]}s_{n-k+2}.
\end{align*}
If $l<n-k+2$, then $s_{n-k}s_{[n-3,k]}s_{[n-1,l]}s_{n-k+2}\sigma(s_{n-k})=s_{[n-3,k+1]}s_{[n-1,l]}$.
Similarly, for $0\le i\le n-l-k+2$ (note that $k+i\le n-l+2<l$), we have
\begin{align*}
&(t_{n-k+2-i}\cdots t_{n-4-i}t_{n-2-i})s_{[n-2-i, k+i]}s_{[n-1,l]}\sigma(t_{n-k+2-i}\cdots t_{n-4-i}t_{n-2-i})^{-1}\\
=&(t_{n-k+2-i}\cdots t_{n-4-i})s_{[n-3-i,k+i]}s_{[n-1,l]}s_{n-2-i}\sigma(t_{n-k+2-i}\cdots t_{n-4-i})^{-1}\\
=&(t_{n-k+2-i}\cdots t_{n-4-i})s_{n-4-i}s_{[n-3-i,k+i]}s_{[n-1,l]}\sigma(t_{n-k+2-i}\cdots t_{n-4-i})^{-1}\\
=&\cdots\\
=&s_{[n-3-i,k+i]}s_{[n-1,l]}s_{n-k+2-i}.
\end{align*}
If $l<n-k+2-i$, then $s_{n-k-i}s_{[n-3-i,k+i]}s_{[n-1,l]}s_{n-k+2-i}\sigma(s_{n-k-i})=s_{[n-3-i,k+i+1]}s_{[n-1,l]}$.
Thus $r's_{[n-2,k]}s_{[n-1,l]}\sigma(r')^{-1}=s_{[l+k-5,n-l+2]}s_{[n-1,l+1]}$.
This implies that if $k=4$, then $\supp_\sigma(r's_{[n-2,k]}s_{[n-1,l]}\sigma(r')^{-1})\subsetneq S$ and hence $X_{w_{k,l}}(b)=\emptyset$ by Theorem \ref{empty}.
If $k\geq 6$, then it is easy to check that $rs_{[n-2,k]}s_{[n-1,l]}\sigma(r)^{-1}=s_{[l+\frac{k}{2}-3,n-l+2]}s_{[n-1,l+\frac{k}{2}-1]}$.
Thus $\supp_\sigma(rs_{[n-2,k]}s_{[n-1,l]}\sigma(r)^{-1})\subsetneq S$ and hence $X_{w_{k,l}}(b)=\emptyset$ by Theorem \ref{empty}.
\end{proof}

Our next goal is to prove that if $w_{k,l}\in \SAdm(\mu)$ satisfies $X_{w_{k,l}}(b)=\emptyset$, then $k<l$ satisfies one of the conditions in Lemma \ref{empty lemm}.
\begin{lemm}
\label{reduction 0}
Assume that $\frac{n+3}{2}\le l\le n-1$.
Then there exists $s\in \tS$ and $w'\in \tW$ such that $w_{3,l}\approx w'$, $sw'\sigma(s)\approx w_{1,l}$ and $sw'\approx w_{2,l}$.
\end{lemm}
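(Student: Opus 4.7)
My plan is to apply a type-(ii) Deligne--Lusztig reduction step (Proposition~\ref{DL method prop}) at a carefully chosen $\sigma$-cyclic-conjugate $w'\approx w_{3,l}$. Recall that such a step applies exactly when we can find $s\in\tS$ and $w'$ with $\ell(sw'\sigma(s))=\ell(w')-2$; this automatically forces $\ell(sw')=\ell(w')-1$, so the desired length-$(l-1)$ and length-$(l-2)$ targets $w_{2,l}$ and $w_{1,l}$ would arise naturally (up to further $\approx$-moves).

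First I would exploit the two presentations
$$w_{3,l}=\tau(s_2s_3\cdots s_{l-1})s_1s_2=(s_0s_1\cdots s_{l-3})s_{n-1}s_0\tau,$$
each a reduced expression of length $l$, together with the $\Omega$-action $\tau s_i=s_{i-2}\tau$ (indices mod $n$) and the braid/commutation relations in $\tW$ of type $\tilde{A}_{n-1}$. The goal is to perform a short sequence of length-preserving $\sigma$-conjugations $w_{3,l}\approx w_1\approx\cdots\approx w'$ so that in $w'$ both a chosen simple reflection $s$ and its Frobenius image $\sigma(s)$ appear in positions from which a DL step is visible. The natural candidate is $s=s_{n-2}$ (so $\sigma(s)=s_2$), motivated by the factorisations $w_{3,l}=w_{2,l}\cdot s_2$ and $w_{2,l}=w_{1,l}\cdot s_1$: cycling the trailing $s_2$ through $\tau$ converts it into an $s_{n-2}$ on the left.

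Concretely, I would first $\sigma$-conjugate by $s_0$ (using $\tau s_0=s_{n-2}\tau$ and the commutation $s_0s_{n-2}=s_{n-2}s_0$ for $n\geq4$) to reach $(s_1\cdots s_{l-3})s_{n-1}s_{n-2}s_0\tau$ of length $l$, then $\sigma$-conjugate by $s_{n-2}$. The decisive simplification at this second step is the cancellation $s_0\tau s_2=\tau$ (from $\tau s_2=s_0\tau$ and $s_0^2=1$), combined with the commutation of $s_{n-2}$ past $s_1,\dots,s_{l-3}$ (valid because $l\leq n-1$). This produces a $w'$ containing the subword $s_{n-2}s_{n-1}s_{n-2}$, which is exactly what enables the subsequent DL cancellation.

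Second, with $s=s_{n-2}$, I would verify $\ell(sw')=l-1$ via the cancellation $s_{n-2}^2=1$, and $\ell(sw'\sigma(s))=l-2$ via further $\tau s_i$-moves producing a final cancellation. Third, I would establish $sw'\approx w_{2,l}$ and $sw'\sigma(s)\approx w_{1,l}$ by pushing $\tau$ to the left with $\tau s_i$-relations and performing a few additional length-preserving $\sigma$-conjugations, bringing both expressions into the canonical forms $\tau(s_2s_3\cdots s_{l-1})s_1$ and $\tau(s_2s_3\cdots s_{l-1})$.

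The main obstacle is the combinatorial bookkeeping: verifying length-preservation at each intermediate step and tracking all the commutations, braid moves, and $\tau$-conjugations simultaneously. The hypothesis $\tfrac{n+3}{2}\leq l\leq n-1$ enters twice and is essential: the upper bound ensures that $s_{n-2}$ does not already appear in $s_0s_1\cdots s_{l-3}$, so the added occurrences of $s_{n-2}$ commute past without collision, while the lower bound ensures that $s_{l-1}$ and $\sigma(s_{l-1})=s_{n-l+1}$ are non-adjacent in the affine Dynkin diagram, which is needed for the commutations on the right-hand side. If the direct computation with $s=s_{n-2}$ turned out to be too rigid, a natural alternative to try would be $s=s_{n-1}$ (so $\sigma(s)=s_1$), guided by the factorisation $w_{2,l}=w_{1,l}\cdot s_1$.
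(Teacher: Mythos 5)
Your overall strategy and your intermediate element are exactly the paper's: after $\sigma$-conjugating $w_{3,l}=s_0s_1\cdots s_{l-3}s_{n-1}s_0\tau$ first by $s_0$ and then by $s_{n-2}$ one indeed arrives at $w'=s_1\cdots s_{l-3}s_{n-2}s_{n-1}s_{n-2}\tau=s_1\cdots s_{l-3}s_{n-1}s_{n-2}s_{n-1}\tau$ of length $l$, and the whole lemma then reduces to choosing the right $s$. But your primary choice $s=s_{n-2}$ does not work, and the verification you sketch for it is false. Compute directly: $s_{n-2}w'=s_1\cdots s_{l-3}s_{n-1}s_{n-2}\tau$ has length $l-1$ as you say, but then
$$s_{n-2}w'\sigma(s_{n-2})=s_1\cdots s_{l-3}s_{n-1}s_{n-2}\,\tau s_2=s_1\cdots s_{l-3}s_{n-1}s_{n-2}s_0\tau,$$
since $\tau s_2=s_0\tau$. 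There is no ``final cancellation'' here: the new letter is $s_0$, which cancels nothing, and this element is precisely $s_0w_{3,l}\sigma(s_0)$, of length $l$. So $\ell(s_{n-2}w'\sigma(s_{n-2}))=\ell(w')$ and you are in case (i) of the reduction method (a universal homeomorphism back towards $X_{w_{3,l}}$), not the case (ii) step the lemma requires. The heuristic that led you to $s_{n-2}$ (cancelling the trailing $s_2$ of $w_{3,l}=w_{2,l}s_2$) misfires because right multiplication by $\sigma(s)=s_2$ must first be pushed through $\tau$, which turns it into $s_0$, not $s_2$.

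The correct choice is the one you relegate to a fallback, $s=s_{n-1}$: then $s_{n-1}w'=s_1\cdots s_{l-3}s_{n-2}s_{n-1}\tau$ has length $l-1$, and since $\sigma(s_{n-1})=s_1$ and $\tau s_1=s_{n-1}\tau$, the trailing letter becomes $s_{n-1}$ and cancels, giving $s_{n-1}w'\sigma(s_{n-1})=s_1\cdots s_{l-3}s_{n-2}\tau$ of length $l-2$; one further $\sigma$-conjugation by $s_0$ (resp.\ by $s_0$ after $s_{n-1}$) lands on $w_{1,l}$ (resp.\ $w_{2,l}$). As written, your proposal commits its decisive length computation to a claim that is simply wrong, so the argument has a genuine gap even though the correct reflection appears in your list of candidates. (A minor additional slip: at $l=\tfrac{n+3}{2}$ the reflections $s_{l-1}$ and $s_{n-l+1}$ \emph{are} adjacent, so the role you ascribe to the lower bound on $l$ is not accurate either; the bound is really about membership in $\SAdm(\mu)_{\neq\DL}$, while the computation itself only needs $l\le n-1$ for the commutations of $s_{n-2}$ past $s_1,\dots,s_{l-3}$.)
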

\begin{proof}
Set $w'=s_1s_2\cdots s_{l-3}s_{n-1}s_{n-2}s_{n-1}\tau$.
Clearly, $\ell(w')=l$.
So the diagram
\begin{align*}
w_{3,l}=&s_0s_1\cdots s_{l-3}s_{n-1}s_0\tau\\
\xrightarrow{s_{n-2}s_0}&s_1s_2\cdots s_{l-3}s_{n-2}s_{n-1}s_{n-2}\tau\\
=& s_1s_2\cdots s_{l-3}s_{n-1}s_{n-2}s_{n-1}\tau=w'
\end{align*}
shows $w\approx w'$.
Set $s=s_{n-1}$.
Clearly, $\ell(sw'\sigma(s))=l-2$ and $\ell(sw')=l-1$.
Moreover, $sw'\sigma(s)\xrightarrow{s_0}s_0s_1\cdots s_{l-3}\tau=w_{1,l}$ and $sw'\xrightarrow{s_0s_{n-1}}s_0s_1\cdots s_{l-3}s_{n-1}\tau=w_{2,l}$.
This finishes the proof.
\end{proof}

\begin{lemm}
\label{reduction}
Assume that $4\le k\le \frac{n+1}{2}$ and $\frac{n+3}{2}\le l\le n-1$.
\begin{enumerate}[(i)]
\item If $k$ is odd and $k+l\le n+2$, then there exists $s\in \tS$ and $w'\in \tW$ such that $w_{k,l}\approx w'$, $sw'\sigma(s)\approx w_{k-2,l}$ and $sw'\approx w_{k-1,l}$.
\item If $l\equiv n-1$ and $k+l\geq n+4$, then there exists $s\in \tS$ and $w'\in \tW$ such that $w_{k,l}\approx w'$, $sw'\sigma(s)\approx w_{k,l-2}$ and $sw'\approx w_{k,l-1}$.
\item If $l\equiv n-1$ and $k+l=n+3$, then there exists $s\in \tS$ and $w'\in \tW$ such that $w_{k,l}\approx w'$, $sw'\sigma(s)\approx w_{k-1,l-1}$ and $sw'\approx w_{k,l-1}$.
\end{enumerate}
\end{lemm}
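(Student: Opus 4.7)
The plan is to prove all three cases by explicit computation in $\tW$, following the pattern of Lemma \ref{reduction 0}. For each case I will exhibit a length-preserving chain of $\sigma$-twisted conjugations $w_{k,l} = w_0 \xrightarrow{s_{j_1}} w_1 \xrightarrow{s_{j_2}} \cdots \xrightarrow{s_{j_N}} w'$ bringing $w_{k,l}$ into a form in which a braid word $s_a s_b s_a$ is visible at an appropriate end. Choosing $s$ to be one of the letters of this braid word will then satisfy $\ell(sw') = \ell(w') - 1$ and $\ell(sw'\sigma(s)) = \ell(w') - 2$ by a direct length count using the braid relation together with the cancellation $s^2 = 1$ that surfaces after the substitution $\tau s_i \tau^{-1} = s_{i-2}$. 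A second length-preserving chain of $\approx$-moves will then identify $sw'$ and $sw'\sigma(s)$ with the targets among $\{w_{k-1,l},\, w_{k-2,l},\, w_{k,l-1},\, w_{k,l-2},\, w_{k-1,l-1}\}$ asserted by each case.

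For case (i), where the right factor $s_{n-1} s_0 s_1 \cdots s_{k-3}$ of the expression $w_{k,l} = (s_0 s_1 \cdots s_{l-3})(s_{n-1} s_0 \cdots s_{k-3})\tau$ must be shortened by two letters, I plan to conjugate successively by simple reflections whose indices shift by $-2$ at each step (reflecting $\tau s_i \tau^{-1} = s_{i-2}$), mirroring the move $\xrightarrow{s_{n-2} s_0}$ used in Lemma \ref{reduction 0} for $k=3$. The bound $k + l \leq n+2$ is what forces the relevant indices to stay in the commuting range (non-adjacent on the affine cycle), so every conjugation actually preserves length. The length-decreasing step then uses $s = s_{n-1}$ (the analog of the $k=3$ case), producing $w_{k-1,l}$ and $w_{k-2,l}$. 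Case (ii) is the symmetric argument working on the left factor $(s_0 s_1 \cdots s_{l-3})$; the parity hypothesis $l \equiv n-1 \pmod 2$ together with $k+l \geq n+4$ plays the analogous role ensuring the commutations hold and the final identification with $w_{k,l-1}$ and $w_{k,l-2}$ goes through. Case (iii) is the boundary regime $k+l = n+3$ in which the two previous strategies meet, and a single conjugating sequence simultaneously decreases both parameters by one, naturally giving $w_{k,l-1}$ as $sw'$ and $w_{k-1,l-1}$ as $sw'\sigma(s)$.

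The main obstacle is bookkeeping rather than a new idea: conceptually nothing goes beyond Lemma \ref{reduction 0}, but the intermediate expressions grow with $k$ and $l$, and one must carefully manage the interplay between the affine braid relation $s_i s_{i\pm 1} s_i = s_{i\pm 1} s_i s_{i\pm 1}$, the commutations $s_i s_j = s_j s_i$ valid when $|i-j| \not\equiv 1 \pmod n$, the Frobenius rule $\sigma(s_i) = s_{n-i}$, and the index shift $\tau s_i \tau^{-1} = s_{i-2}$. Checking that each chain is length-preserving at every intermediate step, and recognising that the hypotheses $k+l \leq n+2$, $l \equiv n-1 \pmod 2$ with $k+l \geq n+4$, or $k+l = n+3$ are exactly what single out the three reduction templates, is the bulk of the work.
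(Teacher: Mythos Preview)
Your plan is in the right spirit and aligns with the paper's approach: the proof there is indeed a long explicit $\approx$-computation, exhibiting in each case an element $w'$ and a simple reflection $s$, then verifying the required relations by hand. Two specific points of divergence are worth flagging.

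First, in all three cases the paper takes $s=s_1$, not $s=s_{n-1}$. For $k\ge 4$ the reduction does not simply ``mirror the move $\xrightarrow{s_{n-2}s_0}$'' of Lemma~\ref{reduction 0}; instead $w_{k,l}$ is brought to an element $w'$ whose reduced expression ends in $\ldots s_0s_1s_{n-1}s_0\tau$ (in case~(i), $w'=s_{[l-3,1]}^{-1}s_{[k-5,1]}^{-1}s_{n-3}s_0s_1s_{n-1}s_0\tau$), and it is the leftmost $s_1$ appearing after multiplying on the right by $\sigma(s_1)=s_{n-1}$ that cancels. Your heuristic ``braid word visible at an end, take one of its letters'' is roughly right, but the relevant braid/cancellation occurs on the $\tau$-side, not at $s_{n-1}$.

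Second, the chain $w_{k,l}\to w'$ is substantially longer than your description suggests. In case~(i) the paper proceeds in phases: first $\xrightarrow{s_{n-2}s_{n-1}s_0}$, then block moves $\xrightarrow{s_{[n-4,n-k+1]}}$ and $\xrightarrow{s_{[k-3,2]}^{-1}}$, and finally a parity-driven telescoping sequence $(s_3s_{n-5})\cdots(s_{k-4}s_{n-k+2})$ where the hypothesis ``$k$ odd'' is used. The inequalities $k+l\le n+2$, $k\le\tfrac{n+1}{2}$ enter at several separate steps, not merely to keep indices non-adjacent. Cases~(ii) and~(iii) share a long common preliminary reduction (about a dozen steps) before branching; the hypothesis $l\equiv n-1$ is invoked both in that shared part and again in the final identification of $sw'$ with $w_{k,l-1}$. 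So while no new idea beyond Lemma~\ref{reduction 0} is required, the bookkeeping is heavier and more delicately tied to the hypotheses than the ``shift by $-2$'' heuristic captures.
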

\begin{proof}
Note that $4\le k\le \frac{n+1}{2}$ implies $n\geq 7$ and hence $k\le n-3$.

Assume that (i) holds.
Then $l\le n-k+2\le n-2$.
Set 
\begin{align*}
w'&=s_{[l-3,1]}^{-1}s_{[k-5,1]}^{-1}s_{n-3}s_0s_1s_{n-1}s_0\tau\\
&=\tau s_{[l-1,3]}^{-1}s_{[k-3,3]}^{-1}s_{n-1}s_2s_3s_{1}s_2.
\end{align*}
It is easy to check that $s_{[l-1,3]}^{-1}s_{[k-3,3]}^{-1}s_{n-1}s_2s_3s_{1}s_2$ is a reduced expression.
Hence $\ell(w')=k+l-3$ and the diagram
\begin{align*}
w_{k,l}=&s_0s_1\cdots s_{l-3}s_{n-1}s_0\cdots s_{k-3}\tau\\
\xrightarrow{s_{n-2}s_{n-1}s_0}&s_{[l-3,1]}^{-1}s_{[k-3,1]}^{-1}s_{n-1}s_0\tau &(k<l\le n-2)\\
\xrightarrow{s_{[n-4,n-k+1]}}& s_{[n-4,n-k+1]}s_{[l-3,1]}^{-1}s_0s_1s_{n-1}s_0\tau\\
\xrightarrow{s_{[k-3,2]}^{-1}}&s_{[k-3,2]}^{-1}s_{[l-3,1]}^{-1}s_0s_1s_{n-1}s_0\tau &(l\le n-k+2)\\
=&s_{[l-3,1]}^{-1}s_{[k-4,1]}^{-1}s_0s_1s_{n-1}s_0\tau &(k<l)\\
\xrightarrow{s_{k-4}s_{n-k+2}}&s_{k-4}s_{[l-3,1]}^{-1}s_{[k-5,1]}^{-1}s_0s_1s_{n-1}s_0\tau &(l\le n-k+2\ \ \&\ \ 7\le k\le \tfrac{n+1}{2})\\
=&s_{[l-3,1]}^{-1}s_{[k-5,1]}^{-1}s_{k-6}s_0s_1s_{n-1}s_0\tau\\
\xrightarrow{s_{k-6}s_{n-k+4}}&s_{k-6}s_{[l-3,1]}^{-1}s_{[k-5,1]}^{-1}s_0s_1s_{n-1}s_0\tau &(l\le n-k+2\ \ \&\ \ 9\le k\le \tfrac{n+1}{2})\\
=&s_{[l-3,1]}^{-1}s_{[k-5,1]}^{-1}s_{k-8}s_0s_1s_{n-1}s_0\tau\\
\xrightarrow{\cdots}&\cdots\\
\xrightarrow{s_3s_{n-5}}&s_{[l-3,1]}^{-1}s_{[k-5,1]}^{-1}s_{1}s_0s_1s_{n-1}s_0\tau &(\text{$k$ is odd})\\
\xrightarrow{s_{n-3}s_{n-1}}&s_{[l-3,1]}^{-1}s_{[k-5,1]}^{-1}s_{n-3}s_0s_1s_{n-1}s_0\tau &(k<l\le n-2)\\
=&w'
\end{align*}
shows $w_{k,l}\approx w'$.
Here each condition in () is used to deduce $\rightarrow$ or $=$ on the same row, and $(s_3s_{n-5})\cdots (s_{k-6}s_{n-k+4})(s_{k-4}s_{n-k+2})=1$ if $k=5$.
Set $s=s_1$.
Then $l\le n-2$ implies
\begin{align*}
sw'\sigma(s)=s_{[l-3,2]}^{-1}s_{[k-5,1]}^{-1}s_0s_1s_{n-1}s_0\tau\xrightarrow{s_0s_1s_{[n-1,n-3]}s_{[n,n-2]}}s_{[l-3,0]}^{-1}s_{n-1}s_{[k-5,0]}^{-1}\tau=w_{k-2,l}.
\end{align*}
This shows $\ell(sw'\sigma(s))=k+l-5$ and $sw'\sigma(s)\approx w_{k-2,l}$.
Similarly, we have
\begin{align*}
sw'=s_{[l-3,2]}^{-1}s_{[k-5,1]}^{-1}s_{n-3}s_0s_1s_{n-1}s_0\tau\xrightarrow{s_0s_1s_{n-1}s_{n-2}s_0s_{n-3}s_{n-1}s_{n-2}}s_1s_{[l-3,0]}^{-1}s_{n-1}s_{[k-5,0]}^{-1}\tau
\end{align*}
and
$$s_1s_{[l-3,0]}^{-1}s_{n-1}s_{[k-5,0]}^{-1}\tau\xrightarrow{(s_{n-k+2}s_{k-4})\cdots (s_{n-5}s_3)(s_{n-3}s_1)}s_{[l-3,0]}^{-1}s_{n-1}s_{[k-4,0]}^{-1}\tau=w_{k-1,l}$$
by $l\le n-2$ and $k\le n-l+2$.
This shows $\ell(sw')=k+l-4$ and $sw'\approx w_{k-1,l}$.

Assume that $l\equiv n-1$ and $k+l\geq n+3$.
Then $l-k\geq n-2k+3\geq 2$ and $l\geq n-k+3\geq \frac{n+5}{2}$.
So we have
\begin{align*}
w_{k,l}=&s_0s_1\cdots s_{l-3}s_{n-1}s_0\cdots s_{k-3}\tau\\
\xrightarrow{s_{n-2}s_{n-1}s_0}&s_{[l-3,1]}^{-1}s_{[k-3,0]}^{-1}s_{n-1}s_0\tau &(k<l\le n-1)\\
\xrightarrow{s_{[n-4,n-k+1]}}& s_{[n-4,n-k+1]}s_{[l-3,1]}^{-1}s_0s_1s_{n-1}s_0\tau \\
\xrightarrow{s_{l-2}s_{n-l+1}}& s_{l-2}s_{[n-4,n-k+1]}s_{[l-4,1]}^{-1}s_0s_1s_{n-1}s_0\tau &(l-k\geq 2\ \ \&\ \ l\geq \tfrac{n+5}{2})\\
=&s_{[n-4,n-k+1]}s_{[l-4,1]}^{-1}s_{l-1}s_0s_1s_{n-1}s_0\tau &(n-k+3\le l\le n-3)\\
\xrightarrow{s_ls_{n-l-1}}& s_ls_{[n-4,n-k+1]}s_{[l-4,1]}^{-1}s_0s_1s_{n-1}s_0\tau &(l\geq \tfrac{n+5}{2})\\
=&s_{[n-4,n-k+1]}s_{[l-4,1]}^{-1}s_{l+1}s_0s_1s_{n-1}s_0\tau &(n-k+3\le l\le n-5)\\
\xrightarrow{\cdots}&\cdots\\
\xrightarrow{s_{n-5}s_4}& s_{[n-4,n-k+1]}s_{[l-4,1]}^{-1}s_{n-4}s_0s_1s_{n-1}s_0\tau &(l\equiv n-1)\\
\xrightarrow{s_2}&s_2 s_{[n-4,n-k+1]}s_{[l-4,1]}^{-1}s_0s_1s_{n-1}s_0\tau \\
=&s_{[n-4,n-k+1]}s_{[l-4,1]}^{-1}s_1s_0s_1s_{n-1}s_0\tau &(k\le n-3)\\
\xrightarrow{s_{n-3}s_{n-1}}& s_{[n-3,n-k+1]}s_{[l-4,1]}^{-1}s_0s_1s_{n-1}s_0\tau.
\end{align*}
Similarly as above, each condition in () is used to deduce $\rightarrow$ or $=$ on the same row, and $(s_{n-5}s_4)\cdots (s_ls_{n-l-1})(s_{l-2}s_{n-l+1})=1$ if $l=n-1$.

Assume that (ii) holds.
Then $k\geq n-l+4\geq 5$.
Set 
\begin{align*}
w'&=s_{[k-2,2]}^{-1}s_{[k-3,n-l+2]}s_{n-3}s_{[n-k-1,1]}^{-1}s_0s_1s_{n-1}s_0\tau\\
&=\tau s_{[k,4]}^{-1}s_{[k-1,n-l+4]}s_{n-1}s_{[n-k+1,3]}^{-1}s_2s_3s_1s_2\\
&=\tau s_{[k,4]}^{-1}s_{[k-1,n-l+4]}s_{n-1}s_{[k,3]}^{-1}s_{[n-k+1,k+1]}^{-1}s_2s_3s_1s_2\\
&=\tau s_{[k,4]}^{-1}s_{[k,3]}^{-1}s_{[k-2,n-l+3]}s_2s_3s_1s_2s_{n-1}s_{[n-k+1,k+1]}^{-1}.
\end{align*}
It is easy to check that $s_{[k,4]}^{-1}s_{[k,3]}^{-1}s_{[k-2,n-l+3]}s_2s_3s_1s_2s_{n-1}s_{[n-k+1,k+1]}^{-1}$ is a reduced expression.
Hence $\ell(w')=k+l-3$ and the diagram
\begin{align*}
w_{k,l}\rightarrow& s_{[n-3,n-k+1]}s_{[l-4,1]}^{-1}s_0s_1s_{n-1}s_0\tau \\
\xrightarrow{s_{[k-3,n-l+2]}}&s_{[k-3,n-l+2]}s_{[n-3,n-k+1]}s_{[n-k,1]}^{-1}s_0s_1s_{n-1}s_0\tau &(l\geq n-k+4)\\
\xrightarrow{s_{[k-2,2]}^{-1}}&s_{[k-2,2]}^{-1}s_{[k-3,n-l+2]}s_{n-3}s_{[n-k-1,1]}^{-1}s_0s_1s_{n-1}s_0\tau\\
=&w'
\end{align*}
shows $w_{k,l}\approx w'$.
Set $s=s_1$.
Then $5\le k\le \tfrac{n+1}{2}$ implies the diagram
\begin{align*}
sw'\sigma(s)=&s_{[k-2,1]}^{-1}s_{[k-3,n-l+2]}s_{[n-k-1,1]}^{-1}s_0s_1s_{n-1}s_0\tau\\
=&s_{[k-2,1]}^{-1}s_{[n-k-1,1]}^{-1}s_{[k-4,n-l+1]}s_0s_1s_{n-1}s_0\tau\\
=&s_{[k-2,2]}^{-1}s_{[k-3,1]}^{-1}s_{[n-k-1,k-1]}^{-1}s_{[k-4,n-l+1]}s_0s_1s_{n-1}s_0\tau\\
=&s_{[n-k-1,2]}^{-1}s_{[k-3,1]}^{-1}s_{[k-4,n-l+1]}s_0s_1s_{n-1}s_0\tau\\
\xrightarrow{s_{[n,n-2]}}& s_0s_{n-1}s_{[n-k-1,2]}^{-1}s_{[k-3,1]}^{-1}s_{[k-4,n-l+1]}s_0s_1\tau\\
\xrightarrow{s_{[n-1,n-3]}}& s_{[n-1,n-3]}s_0s_{[n-k-1,2]}^{-1}s_{[k-3,1]}^{-1}s_{[k-4,n-l+1]}\tau \\
=&s_{[n-1,n-3]}s_{[n-k-1,2]}^{-1}s_{[k-3,0]}^{-1}s_{[k-4,n-l+1]}\tau\\
\xrightarrow{s_0s_1}& s_0s_1s_{n-1}s_{[n-k-1,2]}^{-1}s_{[k-3,0]}^{-1}s_{[k-4,n-l+1]}\tau \\
=& s_{[k-2,n-l+3]}s_{[n-k-1,0]}^{-1}s_{n-1}s_{[k-3,0]}^{-1}\tau\\
\xrightarrow{s_{[k-2,n-l+3]}^{-1}}&s_{[n-k-1,0]}^{-1}s_{n-1}s_{[k-3,0]}^{-1}s_{[l-5,n-k]}^{-1}\tau \\
=&s_{[l-5,0]}^{-1}s_{n-1}s_{[k-3,0]}^{-1}\tau\\
=&w_{k,l-2},
\end{align*}
which shows $\ell(sw'\sigma(s))=k+l-5$ and $sw'\sigma(s)\approx w_{k,l-2}$.
Similarly, we have
\begin{align*}
sw'=&s_{[k-2,1]}^{-1}s_{[k-3,n-l+2]}s_{n-3}s_{[n-k-1,1]}^{-1}s_0s_1s_{n-1}s_0\tau\\
=&s_{[n-k-1,2]}^{-1}s_{[k-3,1]}^{-1}s_{[k-4,n-l+1]}s_{n-3}s_0s_1s_{n-1}s_0\tau \\ 
\xrightarrow{s_1}&s_{[n-k-1,1]}^{-1}s_{[k-3,1]}^{-1}s_{[k-4,n-l+1]}s_0s_1s_{n-1}s_0\tau\\
\xrightarrow{s_{[n,n-2]}}& s_0s_{n-1}s_{[n-k-1,1]}^{-1}s_{[k-3,1]}^{-1}s_{[k-4,n-l+1]}s_0s_1\tau\\
\xrightarrow{s_{[n-1,n-3]}}& s_{[n-1,n-3]}s_{[n-k-1,0]}^{-1}s_{[k-3,1]}^{-1}s_{[k-4,n-l+1]}\tau\\
\xrightarrow{s_0s_1}& s_0s_1s_{n-1}s_{[n-k-1,0]}^{-1}s_{[k-3,1]}^{-1}s_{[k-4,n-l+1]}\tau\\
=& s_{[k-2,n-l+3]}s_{n-1}s_{[n-k-1,0]}^{-1}s_{n-1}s_{[k-3,0]}^{-1}\tau \\
\xrightarrow{s_{[k-2,n-l+3]}^{-1}}&s_{n-1}s_{[n-k-1,0]}^{-1}s_{n-1}s_{[k-3,0]}^{-1}s_{[l-5,n-k]}^{-1}\tau\\
=&s_{n-1}s_{[l-5,0]}^{-1}s_{n-1}s_{[k-3,0]}^{-1}\tau.
\end{align*}
Since $l\equiv n-1$ and $l-k\geq n-2k+4\geq 3$, 
$$s_{n-1}s_{[l-5,0]}^{-1}s_{n-1}s_{[k-3,0]}^{-1}\tau\xrightarrow{(s_{n-l+2}s_{l-2})\cdots(s_3s_{n-3})(s_1s_{n-1})}s_{[l-4,0]}^{-1}s_{n-1}s_{[k-3,0]}^{-1}\tau=w_{k,l-1}.$$
This shows $\ell(sw')=k+l-4$ and $sw'\approx w_{k,l-1}$.

Assume that (iii) holds.
Set
\begin{align*}
w'&=s_{[k-3,2]}^{-1}s_{n-3}s_{[n-k-1,1]}^{-1}s_0s_1s_{n-1}s_0\tau\\
&=\tau s_{[k-1,4]}^{-1}s_{n-1}s_{[n-k+1,3]}^{-1}s_2s_3s_1s_2
\end{align*}
It is easy to check that $ s_{[k-1,4]}^{-1}s_{n-1}s_{[n-k+1,3]}^{-1}s_2s_3s_1s_2$ is a reduced expression.
Hence $\ell(w')=k+l-3$ and the diagram
\begin{align*}
w_{k,l}\rightarrow& s_{[n-3,n-k+1]}s_{[n-k-1,1]}^{-1}s_0s_1s_{n-1}s_0\tau \\
\xrightarrow{s_{[k-3,2]}^{-1}}&s_{[k-3,2]}^{-1}s_{n-3}s_{[n-k-1,1]}^{-1}s_0s_1s_{n-1}s_0\tau\\
=&w'
\end{align*}
shows $w_{k,l}\approx w'$.
Set $s=s_1$.
Then $4\le k\le \frac{n+1}{2}$ implies the diagram
\begin{align*}
sw'\sigma(s)=&s_{[k-3,1]}^{-1}s_{[n-k-1,1]}^{-1}s_0s_1s_{n-1}s_0\tau\\
=&s_{[k-3,2]}^{-1}s_{[k-4,1]}^{-1}s_{[n-k-1,k-2]}^{-1}s_0s_1s_{n-1}s_0\tau\\
=&s_{[n-k-1,2]}^{-1}s_{[k-4,1]}^{-1}s_0s_1s_{n-1}s_0\tau\\
\xrightarrow{s_{[n,n-2]}}&s_0s_{n-1}s_{[n-k-1,2]}^{-1}s_{[k-4,1]}^{-1}s_0s_1\tau\\
\xrightarrow{s_{[n-1,n-3]}}&s_{[n-1,n-3]}s_0s_{[n-k-1,2]}^{-1}s_{[k-4,1]}^{-1}\tau\\
=&s_{[n-1,n-3]}s_{[n-k-1,2]}^{-1}s_{[k-4,0]}^{-1}\tau\\
\xrightarrow{s_0s_1}& s_0s_1s_{n-1}s_{[n-k-1,2]}^{-1}s_{[k-4,0]}^{-1}\tau\\
=&s_{[n-k-1,0]}^{-1}s_{n-1}s_{[k-4,0]}^{-1}\tau\\
=&w_{k-1,l-1},
\end{align*}
which shows $\ell(sw'\sigma(s))=k+l-5$ and $sw'\sigma(s)\approx w_{k-1,l-1}$.
Similarly, we have
\begin{align*}
sw'=&s_{[k-3,1]}^{-1}s_{n-3}s_{[n-k-1,1]}^{-1}s_0s_1s_{n-1}s_0\tau\\
=&s_{[n-k-1,2]}^{-1}s_{[k-4,1]}^{-1}s_{n-3}s_0s_1s_{n-1}s_0\tau\\
\xrightarrow{s_1}&s_{[n-k-1,1]}^{-1}s_{[k-4,1]}^{-1}s_0s_1s_{n-1}s_0\tau\\
\xrightarrow{s_{[n,n-2]}}&s_0s_{n-1}s_{[n-k-1,1]}^{-1}s_{[k-4,1]}^{-1}s_0s_1\tau\\
\xrightarrow{s_{[n-1,n-3]}}&s_{[n-1,n-3]}s_{[n-k-1,0]}^{-1}s_{[k-4,1]}^{-1}\tau\\
\xrightarrow{s_0s_1}& s_0s_1s_{n-1}s_{[n-k-1,0]}^{-1}s_{[k-4,1]}^{-1}\tau\\
=&s_{n-1}s_{[n-k-1,0]}^{-1}s_{n-1}s_{[k-4,0]}^{-1}\tau.
\end{align*}
Since $l\equiv n-1$ and hence $k$ is even, 
\begin{align*}
s_{n-1}s_{[n-k-1,0]}^{-1}s_{n-1}s_{[k-4,0]}^{-1}\tau\xrightarrow{s_{n-k+1}(s_{k-3}s_{n-k+3})\cdots(s_3s_{n-3})(s_1s_{n-1})}&s_{[n-k-1,0]}^{-1}s_{n-1}s_{[k-3,0]}^{-1}\tau\\
=&w_{k,l-1}.
\end{align*}
This shows $\ell(sw')=k+l-4$ and $sw'\approx w_{k,l-1}$.
\end{proof}

For $w_{k,l}\in \SAdm(\mu)_{\neq \DL}$, let $w'_{k,l}$ denote $w_{k-2,l}$ (resp.\ $w_{k,l-2}$, resp.\ $w_{k-1,l-1}$) if $k+l\le n+2$ (resp.\ $k+l\geq n+4$, resp.\ $k+l=n+3$).
Combining the above three lemmas with Proposition \ref{DL method prop} and Theorem \ref{empty}, we obtain the following theorem.
\begin{theo}
\label{non-empty}
We have
\begin{align*}
\SAdm(\mu)_{\DL}=\{w_{k,l}\mid \text{$k=1$ or $l\le \tfrac{n+2}{2}$}\}.
\end{align*}
Moreover, $w_{k,l}\in\SAdm(\mu)_{\neq\DL}$ if and only if $3\le k< \frac{n+2}{2}<l\le n-1$ and one of the following conditions is satisfied:
\begin{enumerate}[(i)]
\item $k$ is odd and $k+l\le n+2$.
\item $l\equiv n-1$ and $k+l\geq n+3$.
\end{enumerate}
If this is the case, $X_{w_{k,l}}(\tau)$ is $\J_\tau$-equivariant universally homeomorphic to a Zariski-locally trivial $\A^1$-bundle over $X_{w'_{k,l}}(\tau)$.
In particular, if $w_{k,l}\in\SAdm(\mu)_{\neq\DL}$ satisfies (i) (resp.\ (ii)), then $X_{w_{k,l}}(\tau)$ is an iterated fibration of rank $\frac{k-1}{2}$ (resp.\ $k+\frac{l-n-3}{2}$) over $X_{w_{1,l}}(\tau)$ (resp.\ $X_{w_{1,n-k+2}}(\tau)$).
\end{theo}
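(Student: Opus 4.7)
The plan is to establish the theorem's four assertions sequentially, using the explicit $\approx$-chains of Lemmas \ref{reduction 0} and \ref{reduction} together with Proposition \ref{DL method prop} to promote emptiness/non-emptiness statements about neighboring $w_{k,l}$'s into an $\A^1$-bundle structure.

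First I would verify the description of $\SAdm(\mu)_{\DL}$ by directly computing $\supp_\sigma(w_{k,l})$ from the reduced expression $w_{k,l}=(s_0s_1\cdots s_{l-3})(s_{n-1}s_0\cdots s_{k-3})\tau$. This gives $\supp(w_{k,l}\tau^{-1})=\{s_0,\ldots,s_{l-3}\}$ when $k=1$ and $\{s_{n-1}\}\cup\{s_0,\ldots,s_{l-3}\}$ when $k\ge 2$. Closing under the $\tau\sigma$-orbits $\{s_{n-1}\}$ and $\{s_i,s_{n-2-i}\}$ listed in the preceding subsection, the $k=1$ case always misses $s_{n-1}$, while the $k\ge 2$ case yields closure with complement $\{s_{l-2},\ldots,s_{n-l}\}$, which is nonempty exactly when $l\le (n+2)/2$.

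Next I would classify emptiness on the complementary range $k\ge 2,\ l>(n+2)/2$ by matching each pair against Lemma \ref{empty lemm}: case (i) covers $k\ge (n+2)/2$, case (iii) covers every even $k\le (n-1)/2$ with $l\le n-k+2$ (in particular $k=2$), and case (ii) covers $l\equiv n\pmod 2$ with $k+l\ge n+2$ and $k\le (n+1)/2$ (in particular $l=n$). A direct bookkeeping check confirms these three cover exactly the pairs outside the theorem's conditions (i) and (ii). For the remaining pairs, Lemmas \ref{reduction 0} and \ref{reduction} furnish $w'\approx w_{k,l}$ and $s\in\tS$ such that $sw'\sigma(s)\approx w'_{k,l}$ and $sw'\approx \widetilde w$, where $\widetilde w$ equals $w_{k-1,l}$, $w_{k,l-1}$, or $w_{k,l-1}$ in the three subcases of Lemma \ref{reduction}. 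Proposition \ref{DL method prop}(i) converts each $\approx$ into a $\J_\tau$-equivariant universal homeomorphism, so the content of Proposition \ref{DL method prop}(ii) is the decomposition $X_{w'}(\tau)=X_1\sqcup X_2$ with $X_1$ a $\G_m$-bundle over $X_{\widetilde w}(\tau)$ and $X_2$ an $\A^1$-bundle over $X_{w'_{k,l}}(\tau)$.

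The crux is then to show $X_{\widetilde w}(\tau)=\emptyset$, which forces $X_1=\emptyset$ and identifies $X_{w_{k,l}}(\tau)$ with the $\A^1$-bundle $X_2$. In theorem case (i) ($k$ odd, $k+l\le n+2$) the intermediate $\widetilde w=w_{k-1,l}$ has $k-1$ even, $k-1\le (n-1)/2$, and $l\le n-(k-1)+2$, so falls into Lemma \ref{empty lemm}(iii). In theorem case (ii) ($l\equiv n-1$, $k+l\ge n+3$) the intermediate $\widetilde w=w_{k,l-1}$ has $l-1\equiv n\pmod 2$, $k\le (n+1)/2$, and $n-(l-1)+2\le k$, hence satisfies Lemma \ref{empty lemm}(ii); the edge $l-1\ge (n+3)/2$ that the lemma requires follows from a short parity check combined with $k<(n+2)/2$. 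Once the $\A^1$-bundle is in hand, non-emptiness and the iterated-fibration statement are inductive: the base strata $X_{w_{1,l}}(\tau)$ and $X_{w_{1,n-k+2}}(\tau)$ lie in $\SAdm(\mu)_{\DL}$ and so are non-empty by Theorem \ref{empty} (their $\sigma$-support is proper, failing condition (i)). The iteration $(k,l)\to(k-2,l)$ in case (i) takes $(k-1)/2$ steps to reach $(1,l)$; in case (ii) one first iterates $(k,l)\to(k,l-2)$ until reaching $k+l=n+3$ (a parity check places this consistently within case (ii)), applies the transitional step $(k,l)\to(k-1,l-1)$ from Lemma \ref{reduction}(iii) which lands in case (i), then finishes via $(k-2)/2$ further steps; summing the step counts yields the advertised rank $k+(l-n-3)/2$.

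The main obstacle I expect is the parity/range bookkeeping in the previous paragraph: pinning down exactly why the parity conditions ``$k$ odd'' and ``$l\equiv n-1$'' in the theorem are the precise ones that make the intermediate element $\widetilde w$ land in the emptiness range of Lemma \ref{empty lemm}, so that the Deligne-Lusztig decomposition really collapses onto its closed $\A^1$-fiber half. All the analytic input (Lemmas \ref{empty lemm} and \ref{reduction}, Proposition \ref{DL method prop}, Theorem \ref{empty}) is already in place; what remains is a careful but purely combinatorial verification that the reduction graph on pairs $(k,l)$ has the desired shape.
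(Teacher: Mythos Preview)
Your approach is exactly the one the paper takes: the paper states the theorem as an immediate consequence of Lemma \ref{empty lemm}, Lemmas \ref{reduction 0}--\ref{reduction}, Proposition \ref{DL method prop}, and Theorem \ref{empty}, and you have correctly identified how these pieces fit together. The key observation --- that the $\G_m$-half of the Deligne--Lusztig decomposition vanishes because the intermediate element $\widetilde w$ always lands in the emptiness range of Lemma \ref{empty lemm} --- is precisely the mechanism, and your parity matching for $\widetilde w$ against Lemma \ref{empty lemm}(ii)/(iii) is correct.

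One small imprecision worth flagging: your description of the iterated path in case (ii) (``iterate $(k,l)\to(k,l-2)$ until reaching $k+l=n+3$, then apply Lemma \ref{reduction}(iii), then finish with $(k-2)/2$ steps'') only works when $k$ is even. When $k$ is odd in case (ii), the parity of $k+l$ (which is $\equiv n\pmod 2$ since $l\equiv n-1$) prevents you from ever hitting $k+l=n+3$; instead the final application of Lemma \ref{reduction}(ii) takes you from $k+l=n+4$ directly to $k+l=n+2$, landing you in case (i) without passing through Lemma \ref{reduction}(iii). The step count still comes out to $k+(l-n-3)/2$ and the base is still $w_{1,n-k+2}$, so this is a bookkeeping adjustment rather than a genuine gap.
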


\begin{exam}
\label{13 14}
In the case $n=13$, the non-empty Ekedahl-Oort strata are the following.
\begin{center}
\begin{tikzpicture}
\draw (-1,9)node {$w_{1,13}$};
\draw (0,12)node{$w_{7,12}$};
\draw (0,11.5)node{$w_{6,12}$};
\draw (0,11)node{$w_{5,12}$};
\draw (0,10.5)node {$w_{4,12}$};
\draw (0,10)node {$w_{3,12}$};
\draw (0,9)node {$w_{1,12}$};
\draw (1,10)node {$w_{3,11}$};
\draw (1,9)node {$w_{1,11}$};
\draw (2,12)node{$w_{7,10}$};
\draw (2,11.5)node{$w_{6,10}$};
\draw (2,11)node{$w_{5,10}$};
\draw (2,10)node {$w_{3,10}$};
\draw (2,9)node {$w_{1,10}$};
\draw (3,11)node{$w_{5,9}$};
\draw (3,10)node {$w_{3,9}$};
\draw (3,9)node {$w_{1,9}$};
\draw (4,12)node{$w_{7,8}$};
\draw (4,11)node{$w_{5,8}$};
\draw (4,10)node {$w_{3,8}$};
\draw (4,9)node {$w_{1,8}$};
\draw (5,11.5)node{$w_{6,7}$};
\draw (5,11)node{$w_{5,7}$};
\draw (5,10.5)node {$w_{4,7}$};
\draw (5,10)node {$w_{3,7}$};
\draw (5,9.5)node {$w_{2,7}$};
\draw (5,9)node {$w_{1,7}$};
\draw (6,11)node{$w_{5,6}$};
\draw (6,10.5)node {$w_{4,6}$};
\draw (6,10)node {$w_{3,6}$};
\draw (6,9.5)node {$w_{2,6}$};
\draw (6,9)node {$w_{1,6}$};
\draw (7,10.5)node {$w_{4,5}$};
\draw (7,10)node {$w_{3,5}$};
\draw (7,9.5)node {$w_{2,5}$};
\draw (7,9)node {$w_{1,5}$};
\draw (8,10)node {$w_{3,4}$};
\draw (8,9.5)node {$w_{2,4}$};
\draw (8,9)node {$w_{1,4}$};
\draw (9,9.5)node {$w_{2,3}$};
\draw (9,9)node {$w_{1,3}$};
\draw (10,9)node {$w_{1,2}$};
\draw [thick, ->] (0.5,12.04)--(1.5,12.04);
\draw [thick, ->] (2.5,12.04)--(3.5,12.04);
\draw [thick, ->] (0.5,11.54)--(1.5,11.54);
\draw [thick, ->] (0.5,11.04)--(1.5,11.04);
\draw [thick, ->] (0,9.8)--(0,9.2);
\draw [thick, ->] (1,9.8)--(1,9.2);
\draw [thick, ->] (2,9.8)--(2,9.2);
\draw [thick, ->] (3,9.8)--(3,9.2);
\draw [thick, ->] (4,9.8)--(4,9.2);
\draw [thick, ->] (2,10.8)--(2,10.2);
\draw [thick, ->] (3,10.8)--(3,10.2);
\draw [thick, ->] (4,10.8)--(4,10.2);
\draw [thick, ->] (4,11.8)--(4,11.2);
\draw [thick, ->] (0.5,10.5)--(1,10.15);
\draw [thick, ->] (2.5,11.5)--(3,11.15);
\end{tikzpicture}
\end{center}
In the case $n=14$, the non-empty Ekedahl-Oort strata are the following.
\begin{center}
\begin{tikzpicture}
\draw (-1,9)node {$w_{1,14}$};
\draw (0,12)node{$w_{7,13}$};
\draw (0,11.5)node{$w_{6,13}$};
\draw (0,11)node{$w_{5,13}$};
\draw (0,10.5)node {$w_{4,13}$};
\draw (0,10)node {$w_{3,13}$};
\draw (0,9)node {$w_{1,13}$};
\draw (1,10)node {$w_{3,12}$};
\draw (1,9)node {$w_{1,12}$};
\draw (2,12)node{$w_{7,11}$};
\draw (2,11.5)node{$w_{6,11}$};
\draw (2,11)node{$w_{5,11}$};
\draw (2,10)node {$w_{3,11}$};
\draw (2,9)node {$w_{1,11}$};
\draw (3,11)node{$w_{5,10}$};
\draw (3,10)node {$w_{3,10}$};
\draw (3,9)node {$w_{1,10}$};
\draw (4,12)node{$w_{7,9}$};
\draw (4,11)node{$w_{5,9}$};
\draw (4,10)node {$w_{3,9}$};
\draw (4,9)node {$w_{1,9}$};
\draw (5,12)node{$w_{7,8}$};
\draw (5,11.5)node{$w_{6,8}$};
\draw (5,11)node{$w_{5,8}$};
\draw (5,10.5)node {$w_{4,8}$};
\draw (5,10)node {$w_{3,8}$};
\draw (5,9.5)node {$w_{2,8}$};
\draw (5,9)node {$w_{1,8}$};
\draw (6,11.5)node{$w_{6,7}$};
\draw (6,11)node{$w_{5,7}$};
\draw (6,10.5)node {$w_{4,7}$};
\draw (6,10)node {$w_{3,7}$};
\draw (6,9.5)node {$w_{2,7}$};
\draw (6,9)node {$w_{1,7}$};
\draw (7,11)node {$w_{5,6}$};
\draw (7,10.5)node {$w_{4,6}$};
\draw (7,10)node {$w_{3,6}$};
\draw (7,9.5)node {$w_{2,6}$};
\draw (7,9)node {$w_{1,6}$};
\draw (8,10.5)node {$w_{4,5}$};
\draw (8,10)node {$w_{3,5}$};
\draw (8,9.5)node {$w_{2,5}$};
\draw (8,9)node {$w_{1,5}$};
\draw (9,10)node {$w_{3,4}$};
\draw (9,9.5)node {$w_{2,4}$};
\draw (9,9)node {$w_{1,4}$};
\draw (10,9.5)node {$w_{2,3}$};
\draw (10,9)node {$w_{1,3}$};
\draw (11,9)node {$w_{1,2}$};
\draw [thick, ->] (0.5,12.04)--(1.5,12.04);
\draw [thick, ->] (2.5,12.04)--(3.5,12.04);
\draw [thick, ->] (0.5,11.54)--(1.5,11.54);
\draw [thick, ->] (0.5,11.04)--(1.5,11.04);
\draw [thick, ->] (0,9.8)--(0,9.2);
\draw [thick, ->] (1,9.8)--(1,9.2);
\draw [thick, ->] (2,9.8)--(2,9.2);
\draw [thick, ->] (3,9.8)--(3,9.2);
\draw [thick, ->] (4,9.8)--(4,9.2);
\draw [thick, ->] (2,10.8)--(2,10.2);
\draw [thick, ->] (3,10.8)--(3,10.2);
\draw [thick, ->] (4,10.8)--(4,10.2);
\draw [thick, ->] (4,11.8)--(4,11.2);
\draw [thick, ->] (0.5,10.5)--(1,10.15);
\draw [thick, ->] (2.5,11.5)--(3,11.15);
\end{tikzpicture}
\end{center}
\end{exam}

\begin{rema}
\label{positive Coxeter}
Assume that $n$ is odd.
If $k=\frac{n+1}{2}$ or $l=\frac{n+3}{2}$, then $w_{k,l}\in \SAdm(\mu)_{\neq \DL}$ is of positive Coxeter type.
Indeed, $w_{\frac{n+1}{2},n-1}$ is of positive Coxeter type because its finite part is a $\sigma$-Coxeter element.
By Lemma \ref{reduction} and \cite[Lemma 5.1 \& Lemma 5.2]{SSY23}, the elements satisfying $k=\frac{n+1}{2}$ or $l=\frac{n+3}{2}$ are also of positive Coxeter type.
On the other hand, the elements $w_{k,l}\in \SAdm(\mu)_{\neq \DL}$ with $k<\frac{n+1}{2}$ and $l>\frac{n+3}{2}$ are not of positive Coxeter type because $w_{1,l}$ is not a $\sigma$-Coxeter element if $l>\frac{n+3}{2}$ (cf.\ \cite[\S 5.3]{SSY23}).

Assume that $n$ is even.
Similarly as the odd case, $w_{k,l}\in \SAdm(\mu)_{\neq \DL}$ is of positive Coxeter type if and only if $k=\frac{n}{2}$ or $l=\frac{n}{2}+2$.
\end{rema}

\section{Geometric Structure of Non-Empty Ekedahl-Oort Strata}
\label{geometric structure}
Keep the notations and assumptions in \S\ref{non-empty section}.
\subsection{Iterated Fibrations over Deligne-Lusztig Varieties}
For $w_{k,l}\in \SAdm(\mu)_{\DL}$, we have
\begin{align*}
\supp_\sigma(w_{k,l})=
\begin{cases}
\{s_i,s_{n-i-2}\mid 0\le i\le l-3\}\sqcup \{s_{n-1}\} & (k\geq 2)\\
\{s_i,s_{n-i-2}\mid 0\le i\le l-3\} & (k=1\ \&\ l\le \frac{n+2}{2})\\
\tS\setminus\{s_{n-1}\} & (k=1\ \&\ l\geq \frac{n+3}{2}).
\end{cases}
\end{align*}
By Proposition \ref{spherical}, the following lemma completes the description of $\pi(X_{w_{k,l}}(\tau))$ in this case.
\begin{lemm}
\label{SwDL}
Let $w_{k,l}\in \SAdm(\mu)_{\DL}$.
If $l=k+1$, then
\begin{align*}
S(w_{k,k+1},\sigma)=
\begin{cases}
\{s_i\mid k\le i\le n-k-2\}\sqcup\\
\{s_1,s_3,\ldots, s_{k-2}, s_{n-1},s_{n-3},\ldots, s_{n-k}\} & (\text{odd $k$})\\
\{s_i\mid k\le i\le n-k-2\}& (\text{even $k$}).
\end{cases}
\end{align*}
If $l-k\geq 2$, then
\begin{align*}
S(w_{k,l},\sigma)=
\begin{cases}
\{s_i\mid l-1\le i\le n-l-1\}& (\text{$l\le\frac{n+2}{2}$})\\
\{s_i\mid n-l+2\le i\le l-3\}& (\text{$k=1$ \& $l\geq \frac{n+3}{2}$}).
\end{cases}
\end{align*}
In particular, 
\begin{align*}
\supp_\sigma(w_{k,l})\cup S(w_{k,l},\sigma)=
\begin{cases}
\tS\setminus\{s_{l-2},s_{n-l}\} & (k\geq 2\ \text{or} \ l=2)\\
\tS\setminus\{s_{l-2},s_{n-l},s_{n-1}\} & (k=1\ \&\ 3\le l\le \frac{n+2}{2})\\
\tS\setminus\{s_{n-1}\} & (k=1\ \&\ l\geq \frac{n+3}{2}).
\end{cases}
\end{align*}
\end{lemm}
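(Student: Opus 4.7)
The approach is to work directly with the definition
\[
S(w_{k,l},\sigma)=\max\{S'\subseteq S\mid \Ad(w_{k,l})\sigma(S')=S'\}.
\]
The automorphism $\Ad(w_{k,l})\sigma$ of $\tW$ restricts to a partial bijection $f$ on $S$ (partial because $f(s_i)$ may leave $S$), and $S(w_{k,l},\sigma)$ is the union of the cycles of this partial bijection. So the plan is: (1) compute $y:=p(w_{k,l})=s_{[n-2,k]}s_{[n-1,l]}$ as a permutation; (2) decide, for each $s_i\in S$, whether $f(s_i)\in S$, and if so compute it; (3) read off the cycles.

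For (1), an elementary tracking of $s_{[n-1,l]}$ followed by $s_{[n-2,k]}$ shows that $y$ fixes $\{1,\dots,k-1\}$, sends $k\mapsto n-1$, shifts $\{k+1,\dots,l-1\}$ down by $1$, sends $l\mapsto n$, and shifts $\{l+1,\dots,n\}$ down by $2$ (with the obvious degeneracy if $l=n$). For (2), writing $w_{k,l}=\vp^\mu y$ and $\sigma(\alpha_i)=\alpha_{n-i}$ gives
\[
\Ad(w_{k,l})\sigma(s_i)=\vp^{\langle\mu,y\alpha_{n-i}\rangle(y\alpha_{n-i})^\vee}\,s_{y\alpha_{n-i}},
\]
which lies in $S$ iff $y\alpha_{n-i}=\chi_{y(n-i),y(n-i+1)}$ is $\pm$ a simple root (equivalently $|y(n-i)-y(n-i+1)|=1$) and $\mu_{y(n-i)}=\mu_{y(n-i+1)}$. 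Since $\mu$ is $0$ on $\{1,\dots,n-2\}$ and $-1$ on $\{n-1,n\}$, the $\mu$-condition just says the two values lie in the same block. Both conditions can be read off from (1) by a case analysis on $j=n-i$ and its position relative to $k,l$.

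For (3) the qualitative picture splits according to whether $l=k+1$ or $l-k\ge 2$. When $l=k+1$ the consecutive images $y(k)=n-1$, $y(l)=n$ fuse into $-\alpha_{n-1}$, producing an extra arrow $s_{n-k}\to s_{n-1}$ that glues the ``left'' fixed block to the ``right'' shift-by-$2$ block; the cycle structure of $f$ then splits by the parity of $k$, matching the two formulas stated in the lemma. When $l-k\ge 2$ and $l\le(n+2)/2$, the middle shift-by-$1$ block contributes identity-like arrows only for $k+1\le j\le l-2$, while the shift-by-$2$ block gives the involution $i\mapsto n-2-i$; only its symmetric range $l-1\le i\le n-l-1$ survives in cycles. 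In the remaining case $k=1$, $l\ge(n+3)/2$, the left fixed block disappears and a parallel analysis isolates the cycles $n-l+2\le i\le l-3$. The final ``in particular'' clause is then obtained by taking the set-theoretic union with the description of $\supp_\sigma(w_{k,l})$ recalled at the start of the subsection, which writes $\supp_\sigma$ as $\tS$ minus an interval and $S(w_{k,l},\sigma)$ as filling most of that interval; only the two boundary simple reflections $s_{l-2}, s_{n-l}$ (and, when $k=1$ and $l\le(n+2)/2$, also $s_{n-1}$) remain uncovered.

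The main obstacle is the disciplined bookkeeping of boundary indices $j=k-1,k,l-1,l,n-1$ and the collision cases $k=n-2$, $l=n-1$, $l=n$, where shift blocks merge; one also has to be careful to verify that no arrows from outside the claimed cycle-set feed into it (so that the claimed set really is $\Ad(w)\sigma$-stable, not merely invariant in one direction). None of this is deep, but the parity dichotomy in the $l=k+1$ case and the exact cutoff for the symmetric interval in the $l-k\ge 2$ case require careful verification rather than a one-line argument.
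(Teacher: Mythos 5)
Your proposal is correct and follows essentially the same route as the paper: the paper likewise computes $y_{k,l}\sigma(s_i)y_{k,l}^{-1}$ explicitly as transpositions $(y(n-i)\ y(n-i+1))$, discards those that are not simple or are moved by conjugation by $\vp^\mu$ (equivalently, your condition $\mu_{y(n-i)}=\mu_{y(n-i+1)}$), and then identifies $S(w_{k,l},\sigma)$ as the maximal stable set by tracing the resulting orbits, with the same parity dichotomy for $l=k+1$ and the same boundary analysis for $l-k\geq 2$.
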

\begin{proof}
Set $y_{k,l}=s_{[n-2,k]}s_{[n-1,l]}(=p(w_{k,l}))$.
Assume that $l=k+1$.
Then 
\begin{align*}
y_{k,k+1}\sigma(s_i)y_{k,k+1}^{-1}=\begin{cases}
s_{n-i-2} & (1\le i\le n-k-2)\\
(k\ n) & (i=n-k-1)\\
s_{n-1} & (i=n-k)\\
(k-1\ n-1) & (i=n-k+1)\\
s_{n-i} & (n-k+2\le i\le n-1).
\end{cases}
\end{align*}
By Theorem \ref{non-empty}, we have $k\le \frac{n}{2}$, i.e., $k-2\le n-k-2$.
Note that $\vp^\mu s_i\vp^{-\mu}=s_i$ unless $i=0,n-2$.
So we have
\begin{align*}
\Ad(w_{k,k+1})\sigma(s_i)=
\begin{cases}
s_{n-i-2} & (1\le i\le k-2)\\
s_{n-i} & (n-k+2\le i\le n-1).
\end{cases}
\end{align*}
It follows from $\Ad(w_{k,k+1})\sigma(s_{n-k+1})\notin S$ that if $k$ is odd (resp.\ even), then
\begin{align*}
&\{s_1,s_3.\ldots, s_{k-2}, s_{n-1},s_{n-3},\ldots, s_{n-k}\}\subset S(w_{k,k+1},\sigma)\quad \text{and}\\
&\{s_2,s_4,\ldots,s_{k-3},s_{n-2},s_{n-4},\ldots, s_{n-k+1}\}\cap S(w_{k,k+1},\sigma)=\emptyset\\
(&\text{resp.\ }\{s_1,s_2,\ldots,s_{k-2},s_{n-1},s_{n-2},\ldots, s_{n-k}\}\cap S(w_{k,k+1},\sigma)=\emptyset).
\end{align*} 
If $k\le \frac{n-1}{2}$ and hence $k-1<\frac{n-2}{2}<n-k-1$, then 
\begin{align*}
\Ad(w_{k,k+1})\sigma(s_i)=s_{n-i-2}
\end{align*}
for $k-1\le i\le \frac{n-2}{2}$.
So it follows from $\Ad(w_{k,k+1})\sigma(s_{n-k-1})\notin S$ that
\begin{align*}
s_{k-1},s_{n-k-1}\notin S(w_{k,k+1},\sigma)\quad\text{and}\quad s_i\in S(w_{k,k+1},\sigma) 
\end{align*}
for $k\le i\le n-k-2$.
If $k=\frac{n}{2}$, then $s_{k-1}=s_{n-k-1}\notin S(w_{k,k+1},\sigma)$.

Assume that $l-k\geq 2$.
Then
\begin{align*}
y_{k,l}\sigma(s_i)y_{k,l}^{-1}=\begin{cases}
s_{n-i-2} & (1\le i\le n-l-1)\\
(l-1\ n) & (i=n-l)\\
(l-2\ n) & (i=n-l+1)\\
s_{n-i-1} & (n-l+2\le i\le n-k-1)\\
(k\ n-1) & (i=n-k)\\
(k-1\ n-1) & (i=n-k+1)\\
s_{n-i} & (n-k+2\le i\le n-1).
\end{cases}
\end{align*}
By Theorem \ref{non-empty}, we have $k+l\le n+1$, i.e., $k-2\le n-l-1$.
So we have
\begin{align*}
\Ad(w_{k,l})\sigma(s_i)=
\begin{cases}
s_{n-i-2} & (1\le i\le k-2)\\
s_{n-i} & (n-k+2\le i\le n-1).
\end{cases}
\end{align*}
It follows from $\Ad(w_{k,l})\sigma(s_{n-k}),\Ad(w_{k,l})\sigma(s_{n-k+1})\notin S$ that
\begin{align*}
\{s_1,s_2,\ldots,s_{k-2},s_{n-1},s_{n-2},\ldots, s_{n-k}\}\cap S(w_{k,l},\sigma)=\emptyset.
\end{align*} 
If $l\le \frac{n+2}{2}$, i.e., $l-3\le n-l-1$, then
\begin{align*}
\Ad(w_{k,l})\sigma(s_i)=
\begin{cases}
s_{n-i-2} & (k-1\le i\le l-3)\\
s_{n-i-1} & (n-l+2\le i\le n-k-1).
\end{cases}
\end{align*}
So it follows from $\Ad(w_{k,l})\sigma(s_{n-l+1})\notin S$ that
\begin{align*}
\{s_{k-1},s_{k},\ldots,s_{l-3},s_{n-k-1},s_{n-k-2},\ldots, s_{n-l+1}\}\cap S(w_{k,l},\sigma)=\emptyset.
\end{align*}
If $l\le \frac{n+1}{2}$, i.e., $l-2\le n-l-1$, then
\begin{align*}
\Ad(w_{k,l})\sigma(s_i)=s_{n-i-2}
\end{align*}
for $l-2\le i\le n-l-1$.
So it follows from $\Ad(w_{k,l})\sigma(s_{n-l})\notin S$ that
\begin{align*}
s_{l-2},s_{n-l}\notin S(w_{k,l},\sigma)\quad \text{and}\quad s_i\in S(w_{k,l},\sigma)
\end{align*}
for $l-1\le i\le n-l-1$.
If $l=\frac{n+2}{2}$, then $s_{l-2}=s_{n-l}\notin S(w_{k,l},\sigma)$.

Assume that $k=1$ and $l\geq \frac{n+3}{2}(>2)$.
Then $\Ad(w_{1,l})\sigma(s_{n-1})\notin S$ and hence $s_{n-1}\notin S(w_{1,l},\sigma)$.
By  $l\geq \frac{n+3}{2}$, i.e., $n-l+2\le l-1$, we have
\begin{align*}
\Ad(w_{1,l})\sigma(s_i)=
\begin{cases}
s_{n-i-2} & (1\le i\le n-l-1)\\
s_{n-i-1} & (l-1\le i\le n-2).
\end{cases}
\end{align*}
It follows from $\Ad(w_{1,l})\sigma(s_{n-l})\notin S$ that
\begin{align*}
\{s_1,s_2,\ldots, s_{n-l},s_{n-2},s_{n-3},\ldots, s_{l-1}\}\cap S(w_{1,l},\sigma)=\emptyset.
\end{align*}
If $l\geq \frac{n+4}{2}$, i.e., $n-l+2\le l-2$, then $\Ad(w_{1,l})\sigma(s_{l-2})=s_{n-l+1}$.
If $l=\frac{n+3}{2}$, then $s_{l-2}=s_{n-l+1}$.
It follows from $\Ad(w_{1,l})\sigma(s_{n-l+1})\notin S$ that $s_{l-2},s_{n-l+1}\notin S(w_{1,l},\sigma)$ in both cases.
Finally, $\Ad(w_{1,l})\sigma(s_i)=s_{n-i-1}$ and hence $s_i\in S(w_{1,l},\sigma)$ for $n-l+2\le i\le l-3$.

The statement is summary of these results.
\end{proof}

We denote by $J(w_{k,l})$ the set of all simple affine reflections appearing in the sequence defining $w_{k,l}\rightarrow w'_{k,l}$ in the proof of Lemma \ref{reduction 0} and Lemma \ref{reduction}.
Specifically,
\begin{align*}
J(w_{k,l})=
\begin{cases}
\{s_i,s_{n-i-2}\mid 0\le i\le k-3\}\sqcup \{s_{n-1}\} & (k+l\le n+2)\\
\{s_i,s_{n-i-2}\mid 0\le i\le k-3\}\sqcup \{s_{n-1},s_{k-2}\} & (k+l\geq n+3).
\end{cases}
\end{align*}

\begin{lemm}
\label{SwnDL}
Let $w_{k,l}\in \SAdm(\mu)_{\neq \DL}$.
Then 
\begin{align*}
S(w_{k,l},\sigma)=
\begin{cases}
S(w_{1,l},\sigma)=\{s_i\mid n-l+2\le i\le l-3\} & (k+l\le n+2)\\
S(w_{1,n-k+2},\sigma)=\{s_i\mid k\le i\le n-k-1\} & (k+l\geq n+3).
\end{cases}
\end{align*}
In particular, $J(w_{k,l})$ commutes with $S(w_{k,l},\sigma)=S(w'_{k,l},\sigma)$.
\end{lemm}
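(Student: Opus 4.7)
The plan is to carry out an explicit calculation of the action of $\Ad(w_{k,l})\sigma$ on $S$, paralleling the proof of Lemma \ref{SwDL}. Since the hypotheses $3\le k<\tfrac{n+2}{2}<l\le n-1$ imply $l-k\ge 2$, the conjugation table for $y_{k,l}\sigma(s_i)y_{k,l}^{-1}$ derived in that proof applies verbatim. Combining it with $\vp^\mu s_j\vp^{-\mu}=s_j$ for $j\notin\{0,n-2\}$ and checking that the output indices $n-i-2,\,n-i-1,\,n-i$ never hit $0$ or $n-2$ inside the relevant ranges, I would read off that $\Ad(w_{k,l})\sigma$ sends $s_i$ to $s_{n-i-2},\,s_{n-i-1},\,s_{n-i}$ on the three intervals $[1,n-l-1]$, $[n-l+2,n-k-1]$, $[n-k+2,n-1]$ respectively, while $\Ad(w_{k,l})\sigma(s_i)\notin S$ for $i\in\{n-l,n-l+1,n-k,n-k+1\}$.

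I would then split into the two subcases of Theorem \ref{non-empty}. For $k+l\le n+2$, the candidate $\{s_i\mid n-l+2\le i\le l-3\}$ lies inside the middle interval (using $l-3\le n-k-1$) and is stable under the involution $i\mapsto n-i-1$, giving the inclusion $\supseteq$. For the reverse inclusion I would rule out every other $s_j$ in the domain by tracing its $\Ad(w_{k,l})\sigma$-orbit: an index in $[1,n-l-1]$ maps into $[l-1,n-3]$, and one or two further applications either hit the forbidden set $\{n-l,n-l+1,n-k,n-k+1\}$ or cycle through the outer intervals and land in $[1,k-2]$, from which the orbit is forced into a forbidden index within a bounded number of steps. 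The case $k+l\ge n+3$ is symmetric, with stable set $\{s_i\mid k\le i\le n-k-1\}$ now contained in the middle interval because $k\ge n-l+2$. Comparing with Lemma \ref{SwDL} applied to $w_{1,l}$ (resp.\ $w_{1,n-k+2}$)---which satisfies $l\ge\tfrac{n+3}{2}$ and falls under the last case of that lemma---yields the stated equality $S(w_{k,l},\sigma)=S(w'_{k,l},\sigma)$.

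For the commutation assertion I would only compare indices. The indices appearing in $J(w_{k,l})$ lie in $\{0,\ldots,k-3\}\cup\{n-k+1,\ldots,n-1\}$, with the additional element $k-2$ in case (ii); those of $S(w_{k,l},\sigma)$ form the interval $[n-l+2,l-3]$ in case (i), or $[k,n-k-1]$ in case (ii), lying strictly between the two pieces of $J(w_{k,l})$. The defining inequalities $k+l\le n+2$ (resp.\ $k+l\ge n+3$), together with $k\ge 3$ and $l\le n-1$, give gaps of at least two on both sides, so the corresponding finite simple reflections commute. The affine reflection $s_0\in J(w_{k,l})$ fails to commute only with $s_1$ and $s_{n-1}$, and neither belongs to $S(w_{k,l},\sigma)$ under our hypotheses.

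The main obstacle is the maximality step in the second paragraph: the orbit-tracing is mechanical but requires splitting according to which of the three intervals the iterated image lands in, and completing the bookkeeping to confirm that every orbit of an $s_j$ outside the claimed stable set reaches a forbidden index in at most a few applications.
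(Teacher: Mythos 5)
Your overall strategy is the paper's: compute $\Ad(w_{k,l})\sigma$ on $S$ from the permutation table in the proof of Lemma \ref{SwDL}, check stability of the candidate set for $\supseteq$, and exclude everything else by chasing orbits into the forbidden indices $\{n-l,n-l+1,n-k,n-k+1\}$; the commutation check by index gaps is also the same. (The one genuine difference is in the inclusion $\supseteq$: the paper derives $S(w_{1,l},\sigma)\subseteq S(w_{k,l},\sigma)$ from the commutation of $J(w_{k,l})$ with $S(w_{1,l},\sigma)$ together with the reduction path $w_{k,l}\rightarrow w'_{k,l}$, whereas you verify directly that the candidate interval is stable under $i\mapsto n-i-1$; your version is equally valid and arguably more self-contained.)

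There are, however, two problems. First, your opening claim that $3\le k<\tfrac{n+2}{2}<l\le n-1$ forces $l-k\ge 2$ is false: the inequalities allow $l=k+1$ exactly when $n$ is odd and $k=\tfrac{n+1}{2}$, $l=\tfrac{n+3}{2}$, and by Theorem \ref{non-empty} such $w_{k,k+1}$ does lie in $\SAdm(\mu)_{\neq\DL}$ whenever $\tfrac{n+1}{2}$ is odd (e.g.\ $w_{3,4}$ for $n=5$ and $w_{7,8}$ for $n=13$, both of which appear in the paper's examples). For these elements the conjugation table you invoke ``verbatim'' is the wrong one — the $l=k+1$ case of Lemma \ref{SwDL} has a different table — so this case must be treated separately; the paper does so and checks $S(w_{k,k+1},\sigma)=\emptyset$ there (consistent with the stated formula, since the interval $[n-l+2,\,l-3]$ is empty). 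Second, the maximality step — showing that no index outside the candidate interval survives — is the actual content of the lemma, and you explicitly leave the orbit bookkeeping unfinished. The strategy you describe does succeed (the paper carries it out, splitting into $k+l\le n+1$, $k+l=n+2$ and $k+l\ge n+3$, and in each case two or three short chains terminating at a forbidden index cover the complement of the candidate set), but as written your argument asserts rather than establishes this, so the proof is incomplete until that case analysis is done.
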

\begin{proof}
By Theorem \ref{non-empty}, we have $l\geq \frac{n+3}{2}$ and $n-k+2\geq \frac{n+3}{2}$.
So $S(w_{1,l},\sigma)=\{s_i\mid n-l+2\le i\le l-3\}$ and $S(w_{1,n-k+2},\sigma)=\{s_i\mid k\le i\le n-k-1\}$ by Lemma \ref{SwDL}.
If $k+l\le n+2$, then $k\le n-l+2$ and $l-3\le n-k-1$.
So if $k+l\le n+2$ (resp.\ $k+l\geq n+3$), $S(w_{1,l},\sigma)$ (resp.\ $S(w_{1,n-k+2},\sigma)$) and $J(w_{k,l})$ commute.
In particular, $S(w_{1,l},\sigma)\subseteq S(w_{k,l},\sigma)$ (resp.\ $S(w_{1,n-k+2},\sigma)\subseteq S(w_{k,l},\sigma)$).
It remains to show that this inclusion is actually equality.

Assume that $k+l\le n+1$.
Then $l-k\geq 2$ by $l\geq \frac{n+3}{2}$.
Similarly as the proof of Lemma \ref{SwDL}, we have
\begin{align*}
\{s_1,s_2,\ldots,s_{k-2},s_{n-1},s_{n-2},\ldots, s_{n-k}\}\cap S(w_{k,l},\sigma)=\emptyset
\end{align*} 
By $l\geq \frac{n+3}{2}$, i.e., $l-1\geq n-l+2$, then
\begin{align*}
\Ad(w_{k,l})\sigma(s_i)=
\begin{cases}
s_{n-i-2} & (k-1\le i\le n-l-1)\\
s_{n-i-1} & (l-1\le i\le n-k-1).
\end{cases}
\end{align*}
So it follows from $\Ad(w_{k,l})\sigma(s_{n-l})\notin S$ that
\begin{align*}
\{s_{k-1},s_{k},\ldots,s_{n-l},s_{n-k-1},s_{n-k-2},\ldots, s_{l-1}\}\cap S(w_{k,l},\sigma)=\emptyset.
\end{align*}
It is straightforward to check that $s_{l-2},s_{n-l+1}\notin S(w_{k,l},\sigma)$.
Combining these results with the above inequality, we have $S(w_{k,l},\sigma)=S(w_{1,l},\sigma)$.

Assume that $k+l=n+2$.
If $l=k+1$, then we must have $k=\frac{n+1}{2}$.
It is straightforward to check the equality $S(w_{k,l},\sigma)=S(w_{1,l},\sigma)=\emptyset$ in this case.
So we may assume that $l-k\geq 2$.
Note that $s_{k-2}=s_{n-l}$ and $s_{k-1}=s_{n-l+1}$.
We have
\begin{align*}
\Ad(w_{k,l})\sigma(s_i)=
\begin{cases}
s_{n-i-2} & (1\le i\le k-3=n-l-1)\\
s_{n-i} & (n-k+2\le i\le n-1).
\end{cases}
\end{align*}
Thus $\Ad(w_{k,l})\sigma(s_{n-l}),\Ad(w_{k,l})\sigma(s_{n-l+1}),\Ad(w_{k,l})\sigma(s_{n-k}),\Ad(w_{k,l})\sigma(s_{n-k+1})\notin S$ implies that
\begin{align*}
\{s_1,s_2,\ldots,s_{k-1},s_{n-1},s_{n-2},\ldots, s_{n-k}\}\cap S(w_{k,l},\sigma)=\emptyset.
\end{align*} 
Combining this with the above inequality, we have $S(w_{k,l},\sigma)=S(w_{1,l},\sigma)$.

Assume that $k+l\geq n+3$.
Then $l-k\geq 2$ by $k\le \frac{n+1}{2}$.
We have $n-k+2\le l-1$ and
\begin{align*}
\Ad(w_{k,l})\sigma(s_i)=
\begin{cases}
s_{n-i-2} & (1\le i\le n-l-1)\\
s_{n-i} & (l-1\le i\le n-1).
\end{cases}
\end{align*}
It follows from $\Ad(w_{k,l})\sigma(s_{n-l}),\Ad(w_{k,l})\sigma(s_{n-l+1})\notin S$ that
\begin{align*}
\{s_1,s_2,\ldots,s_{n-l+1},s_{n-1},s_{n-2},\ldots, s_{l-1}\}\cap S(w_{k,l},\sigma)=\emptyset.
\end{align*}
If $k\le \frac{n+1}{2}$, i.e., $k-2\le n-k-1$, then
\begin{align*}
\Ad(w_{k,l})\sigma(s_i)=
\begin{cases}
s_{n-i-1} & (n-l+2\le i\le k-2)\\
s_{n-i} & (n-k+2\le i\le l-2).
\end{cases}
\end{align*}
It follows from $\Ad(w_{k,l})\sigma(s_{n-k}),\Ad(w_{k,l})\sigma(s_{n-k+1})\notin S$ that
\begin{align*}
\{s_{n-l+2},s_{n-l+3},\ldots,s_{k-2},s_{l-2},s_{l-3},\ldots, s_{n-k}\}\cap S(w_{k,l},\sigma)=\emptyset.
\end{align*}
If $k\le\frac{n}{2}$, i.e., $k-1\le n-k-1$, then we also have $\Ad(w_{k,l})\sigma(s_{k-1})=s_{n-k}$.
If $k=\frac{n+1}{2}$, then $s_{k-1}=s_{n-k}$.
Thus $s_{k-1}\notin S(w_{k,l},\sigma)$ and hence $S(w_{k,l},\sigma)=S(w_{1,n-k+2},\sigma)$.
This finishes the proof.
\end{proof}

Now we can deduce the geometric structure of $X_{\mu}(b)$ via the Ekedahl-Oort stratification.
We set $\supp_\sigma(w)_1\coloneqq \tau_1\supp_\sigma(w)\tau_1^{-1}=\{s_{i+1}\mid s_i\in \supp_\sigma(w)\}$ and $S(w,\sigma)_1\coloneqq\tau_1 S(w,\sigma)\tau_1^{-1}=\{s_{i+1}\mid s_i\in S(w,\sigma)\}$.
Recall that for $w_{k,l}\in \SAdm(\mu)_{\DL}$, $Y(w_{k,l})$ is a Deligne-Lusztig variety in $P_{\supp_\sigma(w_{k,l})}/I$ as in Proposition \ref{spherical}.
Its image $\pi(Y(w_{k,l}))$ is isomorphic to a Deligne-Lusztig variety $\pi_{S(w_{k,l},\sigma)}(Y(w_{k,l}))$  in $P_{\supp_\sigma(w_{k,l})\cup S(w_{k,l},\sigma)}/P_{S(w_{k,l},\sigma)}$.
If $w_{k,l}\in\SAdm(\mu)_{\neq\DL}$ and $k+l\le n+2$ (resp.\ $k+l\geq n+3$), let $Y(w_{k,l})\subset X_{w_{k,l}}(\tau)$ denote the inverse image of $Y(w_{1,n-k+2})$ (resp.\ $Y(w_{1,l})$) under the iterated fibration in Theorem \ref{non-empty}.
Then $Y(w_{k,l})$ is an irreducible component of $X_{w_{k,l}}(\tau)$ (cf.\ Remark \ref{spherical rema}).

\begin{theo}
\label{main theo}
Let $w_{k,l}\in \SAdm(\mu)_{\DL}$.
Then
$$\pi(X_{w_{k,l}}(b))\cong\pi_{S(w_{k,l},\sigma)}(X_{w_{k,l}}(b))=\bigsqcup_{j\in G(F)/G(F)\cap P_{\supp_\sigma(w_{k,l})_1\cup S(w_{k,l},\sigma)_1}} j\tau_1\pi_{S(w_{k,l},\sigma)}(Y(w_{k,l})).$$
Here $\supp_\sigma(w_{k,l})$ and $S(w_{k,l},\sigma)$ are as in Lemma \ref{SwDL}.

Let $w_{k,l}\in \SAdm(\mu)_{\neq\DL}$.
Then $\pi(X_{w_{k,l}}(b))$ is $G(F)$-equivariant universally homeomorphic to a Zariski-locally trivial $\A^1$-bundle over $\pi(X_{w'_{k,l}}(b))$.
In particular, if $k+l\le n+2$ (resp.\ $k+l\geq n+3$), then 
$$\pi(X_{w_{k,l}}(b))\cong\pi_{S(w_{k,l},\sigma)}(X_{w_{k,l}}(b))=\bigsqcup_{j\in G(F)/G(\cO_F)} j\tau_1\pi_{S(w_{k,l},\sigma)}(Y(w_{k,l}))$$
and $\tau_1\pi_{S(w_{k,l},\sigma)}(Y(w_{k,l}))$ is an irreducible component, which is $G(\cO_F)$-equivariant universally homeomorphic to an iterated fibration of rank $\frac{k-1}{2}$ (resp.\ $k+\frac{l-n-3}{2}$) over $\tau_1\pi_{S(w_{k,l},\sigma)}(Y(w_{1,l}))$ (resp.\ $\tau_1\pi_{S(w_{k,l},\sigma)}(Y(w_{1,n-k+2}))$).
\end{theo}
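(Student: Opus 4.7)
The argument naturally splits in two by whether $w_{k,l} \in \SAdm(\mu)_{\DL}$. In both cases I first analyze $X_{w_{k,l}}(\tau)$ and then transport to $X_{w_{k,l}}(b)$ via the isomorphism $xI \mapsto \tau_1 xI$ induced by $\tau_1^{-1} b \sigma(\tau_1) = \tau$. Conjugation by $\tau_1$ carries $\J_\tau$ to $\J_b$ and a parahoric $P_J$ to $P_{J_1}$, which explains the subscript-$1$ shift in the theorem statement. Because $b$ is basic central, $\J_b(F) = G(F)$.

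For $w_{k,l} \in \SAdm(\mu)_{\DL}$ the subgroup $W_{\supp_\sigma(w_{k,l})}$ is finite, so Proposition \ref{spherical} applies to $\pi(X_{w_{k,l}}(\tau))$ directly, with $\supp_\sigma(w_{k,l})$ and $S(w_{k,l},\sigma)$ computed by Lemma \ref{SwDL}. Transporting via $\tau_1$ then yields the claimed formula.

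For $w_{k,l} \in \SAdm(\mu)_{\neq \DL}$, the reductions of Lemma \ref{reduction 0} and Lemma \ref{reduction} give $w_{k,l}\approx w' \to w'_{k,l}$ via a simple affine reflection $s$ with $\ell(sw')=\ell(w')-1$ and $\ell(sw'\sigma(s))=\ell(w')-2$. At the Iwahori level Proposition \ref{DL method prop} writes $X_{w'}(\tau)=X_1\sqcup X_2$ with $X_1$ a $\G_m$-bundle over $X_{sw'}(\tau)$ and $X_2$ an $\A^1$-bundle over $X_{w'_{k,l}}(\tau)$; but $sw'\approx w_{k-1,l}$ or $w_{k,l-1}$, which by Theorem \ref{non-empty} lies in $\SAdm(\mu)\setminus \SAdm(\mu)_{\neq \emptyset}$, so $X_1=\emptyset$. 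To descend to the parahoric level $G(L)/P_{S(w_{k,l},\sigma)}$ I invoke Proposition \ref{DL method prop parahoric} with $J=S(w_{k,l},\sigma)$. Its two hypotheses ($\Ad(w)\sigma(J)=J$ at each step, and $s$ commuting with $J$) are exactly supplied by Lemma \ref{SwnDL}, which further shows that $S(w,\sigma)$ is constant along the chain, equal to $S(w_{1,l},\sigma)$ if $k+l\le n+2$ and to $S(w_{1,n-k+2},\sigma)$ if $k+l\geq n+3$. Hence the vanishing of the $\G_m$-part persists at the parahoric level, and each step produces an $\A^1$-bundle of the same form.

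Iterating arrives at $w_{1,l}$ or $w_{1,n-k+2}$, to which the DL case already applies, and transporting everything by $\tau_1$ yields the statement for $X_{w_{k,l}}(b)$. Since $\supp_\sigma(w_{k,l})=\tS$ for $w_{k,l}\in \SAdm(\mu)_{\neq \DL}$, the indexing parahoric collapses to $G(\cO)$, so the index set is $G(F)/G(\cO_F)$; the irreducibility of each stratum follows from Remark \ref{spherical rema} propagated up the tower of $\A^1$-bundles. The main obstacle is precisely the parahoric-level commuting hypothesis in Proposition \ref{DL method prop parahoric}: without the explicit identification in Lemma \ref{SwnDL}, one could neither lift the reduction chain from the Iwahori level to the parahoric level, nor guarantee that the chain ``stays inside'' the same parahoric, which is what makes the iterated $\A^1$-fibration structure survive the passage $G(L)/I \to G(L)/K$.
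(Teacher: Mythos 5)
Your proposal is correct and follows essentially the same route as the paper: the $\SAdm(\mu)_{\DL}$ case via Proposition \ref{spherical}, Lemma \ref{SwDL} and transport by $\tau_1$, and the $\SAdm(\mu)_{\neq\DL}$ case via the reduction chains of Lemmas \ref{reduction 0} and \ref{reduction}, the commutation statement of Lemma \ref{SwnDL}, and Proposition \ref{DL method prop parahoric}, with the $\G_m$-part vanishing because $w_{k-1,l}$ (resp.\ $w_{k,l-1}$) is empty by Theorem \ref{non-empty}. One small correction: the index set $G(F)/G(\cO_F)$ is inherited from the base of the fibration, where Lemma \ref{SwDL} gives $\supp_\sigma(w_{1,l})_1\cup S(w_{1,l},\sigma)_1=S$ so that the relevant parahoric is $K=G(\cO)$; it does not follow from $\supp_\sigma(w_{k,l})=\tS$, since $P_{\tS}=G(L)$.
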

\begin{proof}
Let $w_{k,l}\in \SAdm(\mu)_{\DL}$.
By Proposition \ref{spherical}, we have
$$\pi_{S(w_{k,l},\sigma)}(X_{w_{k,l}}(\tau))=\bigsqcup_{j\in \J_\tau/\J_\tau\cap P_{\supp_\sigma(w_{k,l})\cup S(w_{k,l},\sigma)}} j\pi_{S(w_{k,l},\sigma)}(Y(w_{k,l})).$$
By $\tau_1^{-1}b\sigma(\tau_1)=\tau$, we have isomorphisms $X_\mu(\tau)\xrightarrow{\sim}X_\mu(b), xK\mapsto \tau_1xK$ and $\J_{\tau}\xrightarrow{\sim}\J_b=G(F), j\mapsto \tau_1 j\tau_1^{-1}$.
Under these isomorphisms, $\pi_{S(w_{k,l},\sigma)}(X_{w_{k,l}}(\tau))$ maps to
$$\pi_{S(w_{k,l},\sigma)}(X_{w_{k,l}}(b))=\bigsqcup_{j\in G(F)/G(F)\cap P_{\supp_\sigma(w_{k,l})_1\cup S(w_{k,l},\sigma)_1}} j\tau_1\pi_{S(w_{k,l},\sigma)}(Y(w_{k,l})).$$
The first statement follows from this and $\pi_{S(w_{k,l},\sigma)}(X_{w_{k,l}}(b))\cong \pi(X_{w_{k,l}}(b))$.

Let $w_{k,l}\in \SAdm(\mu)_{\neq\DL}$.
By Lemma \ref{SwnDL} and Proposition \ref{DL method prop parahoric}, $\pi_{S(w_{k,l},\sigma)}(X_{w_{k,l}}(b))$ is $G(F)$-equivariant universally homeomorphic to a Zariski-locally trivial $\A^1$-bundle over $\pi_{S(w_{k,l},\sigma)}(X_{w'_{k,l}}(b))$.
In particular, if $w_{k,l}\in\SAdm(\mu)_{\neq\DL}$ satisfies (i) (resp.\ (ii)) of Theorem \ref{non-empty}, then $\pi_{S(w_{k,l},\sigma)}(X_{w_{k,l}}(b))$ is an iterated fibration of rank $\frac{k-1}{2}$ (resp.\ $k+\frac{l-n-3}{2}$) over $\pi_{S(w_{k,l},\sigma)}(X_{w_{1,l}}(b))$ (resp.\ $\pi_{S(w_{k,l},\sigma)}(X_{w_{1,n-k+2}}(b))$).
By Lemma \ref{SwDL}, we have $\supp_\sigma(w_{k,l})_1\cup S(w_{k,l},\sigma)_1=S$ if $k=1$ and $l\geq \frac{n+3}{2}$.
Thus the last statement follows from these results and the isomorphism $\pi_{S(w_{k,l},\sigma)}(X_{w_{k,l}}(b))\cong \pi(X_{w_{k,l}}(b))$.
This finishes the proof.
\end{proof}

\begin{rema}
If $w_{k,l}\in \SAdm(\mu)_{\DL}$, then we set $P_{k,l}=P_{\supp_\sigma(w_{k,l})_1\cup S(w_{k,l},\sigma)_1}$ and $X_{k,l}=Y_{k,l}=\tau_1\pi(Y(w_{k,l}))\cong \tau_1\pi_{S(w_{k,l},\sigma)}(Y(w_{k,l}))$.
If $w_{k,l}\in \SAdm(\mu)_{\neq\DL}$ and $k+l\le n+2$ (resp.\ $k+l\geq n+3$), then we set $P_{k,l}=G(\cO)$, $X_{k,l}=\tau_1\pi(Y(w_{1,l}))\cong \tau_1\pi_{S(w_{k,l},\sigma)}(Y(w_{1,l}))$ (resp.\ $X_{k,l}=\tau_1\pi(Y(w_{1,n-k+2}))\cong\tau_1\pi_{S(w_{k,l},\sigma)}(Y(w_{1,n-k+2}))$) and $Y_{k,l}=\tau_1\pi(Y(w_{k,l}))\cong \tau_1\pi_{S(w_{k,l},\sigma)}(Y(w_{k,l}))$.
This notation justifies Theorem \ref{stratification}.
\end{rema}

\begin{rema}
\label{DL U}
Let $w_{k,l}\in \SAdm(\mu)_{\neq \emptyset}$.
There is an isomorphism $G(L)/P_{S(w_{k,l},\sigma)}\xrightarrow{\sim} G(L)/P_{S(w_{k,l},\sigma)_1}$
given by $gP_{S(w_{k,l},\sigma)}\mapsto gP_{S(w_{k,l},\sigma)}\tau_1^{-1}=g\tau_1^{-1}P_{S(w_{k,l},\sigma)_1}$.
Under this isomorphism, $\tau_1P_{\supp_\sigma(w_{k,l})\cup S(w_{k,l},\sigma)}/P_{S(w_{k,l},\sigma)}$ maps to $P_{\supp_\sigma(w_{k,l})_1\cup S(w_{k,l},\sigma)_1}/P_{S(w_{k,l},\sigma)_1}$.
So if $w_{k,l}\in \SAdm(\mu)_{\DL}$, then $\tau_1\pi_{S(w_{k,l})}(Y(w_{k,l}))$ maps to
$$\pi_{S(w_{k,l})_1}(Y(w_{k,l}^0))=\{gP_{S(w_{k,l},\sigma)_1}\mid g^{-1}\sigma(g)\in P_{S(w_{k,l},\sigma)_1}\cdot_\sigma Iw_{k,l}^0I\},$$
where $w_{k,l}^0=b^{-1}\tau_1 w_{k,l}\sigma(\tau_1)^{-1}\in W_a$.
Note that this is a $G(F)\cap P_{\supp_\sigma(w_{k,l})_1\cup S(w_{k,l},\sigma)_1}$-equivariant isomorphism.
If, moreover, $\supp_\sigma(w_{k,l})_1\cup S(w_{k,l},\sigma)_1=S$, then we have $w_{k,l}^0\in W_0$ and $\pi_{S(w_{k,l})_1}(Y(v_{k,l}))$ can be identified with a Deligne-Lusztig variety associated to $w_{k,l}^0$ in the partial flag variety of type $S(w_{k,l},\sigma)$ for $G(\cO/\vp)=G(\aFq)$.
Under this identification, the action of $G(\cO_F)$ factors through $G(\cO_F/\vp)=G(\F_q)$, which coincides with the usual action on Deligne-Lusztig varieties for $G(\aFq)$.
Thus if $w_{k,l}\in \SAdm(\mu)_{\neq \DL}$, then $\tau_1\pi_{S(w_{k,l},\sigma)}(Y(w_{k,l}))$ is $G(\cO_F)$-equivariant universally homeomorphic to an iterated fibration over such a Deligne-Lusztig variety.
\end{rema}

\begin{coro}
\label{Coxeter coro}
Assume that $n$ is odd (resp.\ even).
If $k=\frac{n+1}{2}$ or $l=\frac{n+3}{2}$ (resp.\ $k=\frac{n}{2}$ or $l=\frac{n}{2}+2$), then $S(w_{k,l},\sigma)=\emptyset$.
Moreover, the iterated fibration corresponding to $w_{k,l}$ in Theorem \ref{main theo} decomposes into the product of a Deligne-Lusztig variety of Coxeter type and a finite-dimensional affine space.
\end{coro}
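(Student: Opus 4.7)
The plan is to prove the two assertions separately: the emptiness of $S(w_{k,l},\sigma)$ and the product decomposition.

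For the first assertion, we substitute the stated values into the explicit descriptions of $S(w_{k,l},\sigma)$ from Lemmas \ref{SwDL} and \ref{SwnDL}. In each case the resulting interval of indices is empty. For instance, if $n$ is odd and $k=\frac{n+1}{2}$, $l=\frac{n+3}{2}$, then $k+l=n+2$, so Lemma \ref{SwnDL} gives
\[
S(w_{k,l},\sigma)=\{s_i\mid n-l+2\le i\le l-3\}=\{s_i\mid \tfrac{n+1}{2}\le i\le \tfrac{n-3}{2}\}=\emptyset.
\]
The remaining cases ($k=\frac{n+1}{2}$ with $l>\frac{n+3}{2}$, or $l=\frac{n+3}{2}$ with $k<\frac{n+1}{2}$, and the analogous even-$n$ cases) reduce similarly: the stated bound on $k$ or $l$ forces the upper and lower indices in the formula of Lemma \ref{SwDL} or \ref{SwnDL} to cross. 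When $k+l\ge n+3$ one uses the second formula of Lemma \ref{SwnDL} instead, and the conclusion is the same.

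For the second assertion, by Remark \ref{positive Coxeter} the element $w_{k,l}$ is of positive Coxeter type precisely under these hypotheses. Since $S(w_{k,l},\sigma)=\emptyset$ we have $P_{S(w_{k,l},\sigma)}=I$, so $\pi_{S(w_{k,l},\sigma)}(Y(w_{k,l}))=Y(w_{k,l})$ is the Iwahori-level Deligne-Lusztig variety itself. Invoking the structural result from \cite{SSY23} for affine Deligne-Lusztig varieties of positive Coxeter type, such a variety decomposes (up to universal homeomorphism) as the product of a classical Deligne-Lusztig variety of Coxeter type with a finite-dimensional affine space. Applied to our situation this gives the desired product decomposition for $\tau_1\pi_{S(w_{k,l},\sigma)}(Y(w_{k,l}))$.

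The main obstacle is reconciling the concrete iterated fibration of Theorem \ref{main theo}---obtained from the explicit reduction sequence in Lemma \ref{reduction} via Proposition \ref{DL method prop parahoric}---with the abstract product form coming from \cite{SSY23}. Since both describe, up to universal homeomorphism, the same variety $\pi(X_{w_{k,l}}(b))$, the product structure automatically transfers. Alternatively, one could verify directly that each $\A^1$-bundle appearing in the reduction is trivial in the positive Coxeter case, by observing that the successive root subgroups involved in the reduction steps split off as direct factors of the ambient unipotent radical, so the iterated fibration telescopes to a single trivial bundle of rank $\frac{k-1}{2}$ (resp.\ $k+\frac{l-n-3}{2}$) over the Coxeter Deligne-Lusztig base $\tau_1\pi(Y(w_{1,l}))$ (resp.\ $\tau_1\pi(Y(w_{1,n-k+2}))$).
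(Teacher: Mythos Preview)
Your proof is correct and follows essentially the same route as the paper: the paper's proof is the one-line ``This is a combination of Remark \ref{positive Coxeter}, Lemma \ref{SwnDL} and \cite[Theorem 5.20]{SSY23}'', and you invoke exactly these three ingredients (emptiness of $S(w_{k,l},\sigma)$ from the explicit formula, positive Coxeter type from Remark \ref{positive Coxeter}, and the product structure from \cite{SSY23}). Two minor remarks: the reference to Lemma \ref{SwDL} is unnecessary since the hypotheses force $w_{k,l}\in\SAdm(\mu)_{\neq\DL}$, so Lemma \ref{SwnDL} alone suffices; and your ``main obstacle'' paragraph, while not wrong, is superfluous---the corollary asserts only that the total space $Y_{k,l}$ is a product, and since \cite[Theorem 5.20]{SSY23} gives this directly for the irreducible component at Iwahori level (which coincides with $Y_{k,l}$ because $S(w_{k,l},\sigma)=\emptyset$), there is nothing to reconcile.
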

\begin{proof}
This is a combination of Remark \ref{positive Coxeter}, Lemma \ref{SwnDL} and \cite[Theorem 5.20]{SSY23}.
\end{proof}


We write $w_{k,l}\geq_{S,\sigma}w_{k',l'}$ if there exists $u\in W_0$ such that $w_{k,l}\geq u^{-1}w_{k',l'}\sigma(u)$.
This is a partial order on $\SW$.
See \cite[Proposition 3.13]{He07a} and \cite[Corollary 4.6]{He07b}.
\begin{lemm}
\label{closure relation lemm}
Let $w_{k,l},w_{k',l'}\in \SAdm(\mu)_{\DL}$ with $w_{k,l}\geq_{S,\sigma}w_{k',l'}$.
If $k,k'\geq 2$, then $k\geq k'$ and $l\geq l'$.
\end{lemm}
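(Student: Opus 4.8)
The statement compares two elements $w_{k,l}, w_{k',l'}\in\SAdm(\mu)_{\DL}$ with $k,k'\ge 2$, under the hypothesis $w_{k,l}\ge_{S,\sigma} w_{k',l'}$; we want $k\ge k'$ and $l\ge l'$. The natural strategy is to extract numerical consequences from the order relation $w_{k,l}\ge u^{-1}w_{k',l'}\sigma(u)$ for some $u\in W_0$, and the most efficient tool available here is the length function and the combinatorial description of $\SAdm(\mu)$ from \S\ref{setting}. Recall $\ell(w_{k,l})=k+l-3$, so the order relation already forces $k+l\ge k'+l'$. This bounds the sum; the task is to separate $k$ from $l$.

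First I would use the refined description of the elements. We have $w_{k,l}=\vp^\mu s_{[n-2,k]}s_{[n-1,l]}$, and also the two rewritings $w_{k,l}=\tau(s_2\cdots s_{l-1})(s_1\cdots s_{k-1})=(s_0\cdots s_{l-3})(s_{n-1}s_0\cdots s_{k-3})\tau$ from the proof of Lemma \ref{empty lemm}. The plan is to read off two separate ``coordinates'' of $w_{k,l}$ that are each monotone in the Bruhat order and invariant (or controlled) under $\sigma$-twisted conjugation by $W_0$. A promising candidate is the pair consisting of $k-1$ and $l-2$, which are exactly the sizes of the two blocks of $\Phi_+\setminus\Phi_{w_{k,l}}=\{\chi_{i,n-1}\mid 1\le i\le k-1\}\cup\{\chi_{i,n}\mid 1\le i\le l-2\}$ recorded in \S\ref{setting}. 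Since these describe $R(w_{k,l})$ (and hence $\LP(w_{k,l})$ via $y\LP(w)=R(w)$ from \S\ref{LP}), and conjugation by $u\in W_0$ interacts with $R$ and $\LP$ in a controlled way, I expect monotonicity of each block size under $\ge_{S,\sigma}$.

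Concretely, the cleanest route is probably: (a) observe that for $w_{k,l}\in\SAdm(\mu)_{\DL}$ with $k\ge 2$ the $\sigma$-support contains $s_{n-1}$ and consists of $\{s_i,s_{n-i-2}\mid 0\le i\le l-3\}\sqcup\{s_{n-1}\}$ (from the displayed formula just before Lemma \ref{SwDL}), so the ``$l$-coordinate'' is detected by $|\supp_\sigma(w_{k,l})|$; (b) show that $\supp_\sigma$ behaves monotonically under $\ge_{S,\sigma}$ when both sides have $k,k'\ge 2$, using that $\sigma$-support of a product/Bruhat-smaller element can only shrink and that conjugation by $W_0$ is absorbed once one passes to $\supp_\sigma$; this gives $l\ge l'$. (c) Then combine $l\ge l'$ with $k+l\ge k'+l'$ — but this only gives $k+l\ge k'+l'$, not $k\ge k'$, so an extra argument is needed. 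For $k\ge k'$ I would instead compare the two block sizes directly: the Bruhat relation $w_{k,l}\ge u^{-1}w_{k',l'}\sigma(u)$ together with the explicit reduced expressions should force the ``$\chi_{\bullet,n-1}$-block'' to be at least as large, i.e. $k-1\ge k'-1$. A good technical device is to project to the quotient by $W_{S(w,\sigma)}$ or to use the partial order characterization via $R(\cdot)$: show $R(u^{-1}w_{k',l'}\sigma(u))$ is essentially independent of $u$ and equals $R(w_{k',l'})$ up to the ambient $W_0$-action, then compare $\Phi_{w_{k,l}}$ and $\Phi_{w_{k',l'}}$ directly as subsets of $\Phi_+$.

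The main obstacle I anticipate is step (c): disentangling the $k$-dependence from the $l$-dependence. The sum bound $k+l-3=\ell(w_{k,l})\ge\ell(w_{k',l'})=k'+l'-3$ is immediate, and monotonicity of $\supp_\sigma$ (hence of $l$) is plausible, but getting $k\ge k'$ independently requires a second invariant that isolates the first block of $\Phi_+\setminus\Phi_{w_{k,l}}$. I would resolve this by a direct Bruhat-order computation with the reduced word $w_{k,l}=(s_0s_1\cdots s_{l-3})(s_{n-1}s_0s_1\cdots s_{k-3})\tau$: the relation $w_{k,l}\ge u^{-1}w_{k',l'}\sigma(u)$ implies (after absorbing $\tau$ and using that $\sigma$-twisted conjugation by $u$ fixes the $\tau$-component) that a reduced word for $u^{-1}(s_0\cdots s_{l'-3})(s_{n-1}s_0\cdots s_{k'-3})\sigma(u)$ embeds as a subword; analyzing how many times the ``$s_{n-1}$-side'' generators can appear pins down $k'\le k$. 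If this subword analysis proves too delicate, the fallback is to invoke the equivalence — stated in the introduction as \cite[Proposition 3.5]{ABFGGN24} — between condition (i) ($k\ge k'$ and $l\ge l'$) and both $w_{k,l}\ge w_{k',l'}$ and its $S$-relative variant, reducing the lemma to citing that equivalence; but I would prefer the self-contained combinatorial argument, since Lemma \ref{closure relation lemm} is only claimed under the extra hypothesis $k,k'\ge 2$, which should make the bookkeeping tractable.
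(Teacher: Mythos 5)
Your first observation — that $\supp_\sigma$ is monotone under $\ge_{S,\sigma}$ (by \cite[Proposition 3.8]{He07a}) and that for $k,k'\ge 2$ its size detects $l$, giving $l\ge l'$ — is correct and does appear in the paper's argument (in the proof of Corollary \ref{closure relation}). But the proposal has a genuine gap at the point you yourself flag as the ``main obstacle'': obtaining $k\ge k'$. The subword/$\Phi_w$ analysis is only sketched, and the sketch does not isolate a second invariant — $|\Phi_+\setminus\Phi_{w_{k,l}}|=(k-1)+(l-2)$ again only sees the sum $k+l$, and $\ell(w_{k,l})=k+l-3$ likewise. So there is no completed argument for $k\ge k'$.

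The fallback you mention is also problematic, in a way that matters. \cite[Proposition 3.5]{ABFGGN24} only gives the equivalence of ``$k\ge k'$ and $l\ge l'$'' with the \emph{ordinary} Bruhat order $w_{k,l}\ge w_{k',l'}$; the variant relative to the hyperspecial level (which is what $\ge_{S,\sigma}$ is) was only \cite[Conjecture 3.21]{ABFGGN24}. In fact the paper explicitly notes (in the remark after Corollary \ref{closure relation}) that the present lemma, via the equivalence it establishes, \emph{proves a part of that conjecture}. So invoking the conjectured $S$-relative equivalence here would be circular.

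The idea your plan is missing is the reduction from $\ge_{S,\sigma}$ to $\ge$. The paper does this as follows: by \cite[Proposition 3.8]{He07a} one can choose the conjugating element $u\in W_0$ so that $\ell(w_{k',l'}\sigma(u))=\ell(w_{k',l'})-\ell(u)$ in addition to $w_{k,l}\ge u^{-1}w_{k',l'}\sigma(u)$. For each $s_i\in\supp(u)$, the length condition together with $k'\ge 2$ forces $n-i-2\le l'-3$, while the Bruhat inequality together with $k\ge 2$ forces $i\le l-3$ or $i=n-1$; combining these with the $\DL$-membership constraint $l+l'\le n+2$ (valid since $k,k'\ge 2$) leaves only $i=n-1$, and one checks separately that $u\ne s_{n-1}$ when $l'>k$. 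Hence $u=1$, so $w_{k,l}\ge w_{k',l'}$, and then \cite[Proposition 3.5]{ABFGGN24} (now in the ordinary Bruhat order, where it is a theorem) gives $k\ge k'$ and $l\ge l'$. That reduction is the content you would need to supply to close the gap.
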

\begin{proof}
By \cite[Proposition 3.5]{ABFGGN24}, $w_{k,l}\geq w_{k',l'}$ is equivalent to saying that $k\geq k'$ and $l\geq l'$.
So it is enough to show that $w_{k,l}\geq_{S,\sigma}w_{k',l'}$ implies $w_{k,l}\geq w_{k',l'}$.

Assume that $k,k'\geq 2$.
Since the case $l'\le k$ is obvious, we may also assume that $l'>k$.
By $w_{k',l'}\in \SW$ and \cite[Proposition 3.8]{He07a}, there exists $u\in W_0$ such that $\ell(w_{k',l'}\sigma(u))=\ell(w_{k',l'})-\ell(u)$ and $w_{k,l}\geq u^{-1}w_{k',l'}\sigma(u)$.
Let $s_i\in \supp(u)$.
It follows from $k'\geq 2$ and $\ell(w_{k',l'}\sigma(u))=\ell(w_{k',l'})-\ell(u)$ that $n-i-2\le l'-3$.
It follows from $k\geq 2$ and $u^{-1}w_{k',l'}\sigma(u)\le w_{k,l}$ that $i\le l-3$ or $i=n-1$.
Thus $n-l'+1\le i\le l-3$ or $i=n-1$.
By Theorem \ref{non-empty} and $k,k'\geq 2$, we have $l+l'\le n+2$.
This implies $i=n-1$.
On the other hand, we can easily check that $u\neq s_{n-1}$ by $l'>k$ (and \cite[Theorem 2.1.5]{BB05} similarly as the proof of \cite[Proposition 3.5]{ABFGGN24}).
Thus $u=1$ and hence $w_{k,l}\geq w_{k',l'}$.
This finishes the proof.
\end{proof}


As explained in \cite[\S2.4]{GHN24}, the closure relations in the fully Hodge-Newton decomposable cases are described by (i) the partial order $\geq_{S,\sigma}$ and (ii) the intersection of indices.
In our case, the same description is possible for $w_{k,l}\in \SAdm(\mu)_{\DL}$, and (i) can be expressed more explicitly.
\begin{coro}
\label{closure relation}
Let $w_{k,l}\in \SAdm(\mu)_{\DL}$.
Then $j\tau_1\overline{\pi(Y(w_{k,l}))}$ is a union of the strata in Theorem \ref{main theo}.
For $w_{k',l'}\in \SAdm(\mu)_{\neq \emptyset}$, $j\tau_1\overline{\pi(Y(w_{k,l}))}$ contains a stratum $j'\tau_1\pi(Y(w_{k',l'}))$ if and only if the following two conditions are both satisfied:
\begin{enumerate}[(i)]
\item $k\geq k'$ and $l\geq l'$.
\item $j(G(F)\cap P_{\supp_\sigma(w_{k,l})_1\cup S(w_{k,l},\sigma)_1})\cap j'(G(F)\cap P_{\supp_\sigma(w_{k',l'})_1\cup S(w_{k',l'},\sigma)_1})\neq \emptyset$.
\end{enumerate}
In particular, $w_{k',l'}\in \SAdm(\mu)_{\DL}$ if this is the case.
\end{coro}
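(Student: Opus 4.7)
The plan is to imitate the closure-relation analysis for the fully Hodge-Newton decomposable cases that is sketched in \cite[\S2.4]{GHN24}. For $w_{k,l}\in\SAdm(\mu)_{\DL}$ the group $W_{\supp_\sigma(w_{k,l})}$ is finite by definition, so Proposition \ref{spherical} realizes $\pi_{S(w_{k,l},\sigma)}(Y(w_{k,l}))$ as a classical Deligne-Lusztig variety inside the proper, finite-dimensional partial flag variety $P_{\supp_\sigma(w_{k,l})\cup S(w_{k,l},\sigma)}/P_{S(w_{k,l},\sigma)}$. Because this partial flag variety is proper, the closure of $\tau_1\pi(Y(w_{k,l}))$ in $X_\mu(b)$ coincides with its closure inside the finite flag variety, and the latter is the standard union of classical Deligne-Lusztig varieties $\tau_1\pi(Y(w'))$ indexed by the elements $w'\in\SW$ with $w'\leq_{S,\sigma}w_{k,l}$.

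First I would verify that every such $w'$ automatically lies in $\SAdm(\mu)_{\DL}$: since $w'$ and $w_{k,l}$ lie in the same $\Omega$-coset and $w'\leq_{S,\sigma}w_{k,l}$, one has $\supp_\sigma(w')\subseteq\supp_\sigma(w_{k,l})\subsetneq\tS$, forcing $W_{\supp_\sigma(w')}$ to be finite. This simultaneously gives the first sentence of the corollary and the final ``in particular'' clause. Next, Lemma \ref{closure relation lemm} combined with \cite[Proposition 3.5]{ABFGGN24} converts the relation $w_{k,l}\geq_{S,\sigma}w_{k',l'}$ for $w_{k',l'}\in\SAdm(\mu)_{\DL}$ into the equivalent numerical condition $k\geq k'$ and $l\geq l'$, yielding (i). The boundary cases $k=1$ or $k'=1$ (where the element $w_{1,l}=s_{[l-3,0]}^{-1}\tau$ is essentially a single affine Coxeter chain) can be handled by direct inspection, since the reduced expressions are short and the Bruhat order is transparent.

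To upgrade the local closure to the global statement, I would invoke the $G(F)$-equivariance of the stratification in Theorem \ref{main theo}. The local analysis describes $\tau_1\overline{\pi(Y(w_{k,l}))}$ inside the single ``slice'' $\tau_1P_{\supp_\sigma(w_{k,l})_1\cup S(w_{k,l},\sigma)_1}/K$; translating by $j\in G(F)$ and intersecting with the $j'$-orbit in another stratum $\tau_1\pi(Y(w_{k',l'}))$, the intersection is nonempty precisely when both (i) holds (so that the corresponding piece appears in the local closure) and the two translates of the stabilizers meet, which is exactly condition (ii). The main obstacle I foresee is the combinatorial step of verifying that the classical Deligne-Lusztig closure yields \emph{every} $w'\in\SW$ with $w'\leq_{S,\sigma}w_{k,l}$ as an indexing element of the form $w_{k',l'}$ with $k\geq k'$ and $l\geq l'$; Lemma \ref{closure relation lemm} handles one direction, but one must also argue that no ``extraneous'' elements of $\SW\setminus\SAdm(\mu)$ appear, and this requires using that the finite flag variety sits inside $X_\mu(b)$ via an embedding whose image is contained in the admissible locus.
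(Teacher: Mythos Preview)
Your approach is essentially the paper's: it cites \cite[Theorem 7.2.1]{GH15} and \cite[Theorem 3.1(2)]{He09} directly for the fact that $\tilde j\,\overline{\pi(Y(w_{k,l}))}$ is a union of strata characterized by $w_{k,l}\geq_{S,\sigma}w_{k',l'}$ together with the index-intersection condition, and then converts $\geq_{S,\sigma}$ into (i) using Lemma \ref{closure relation lemm} plus the observation (via \cite[Proposition 3.8]{He07a} and the explicit form of $\supp_\sigma$) that $\geq_{S,\sigma}$ already forces $l\geq l'$ and $(k=1\Rightarrow k'=1)$, which disposes of your boundary cases uniformly. Your final worry is unnecessary: the closure is taken inside $X_\mu(b)$, whose Ekedahl-Oort strata are by construction indexed by $\SAdm(\mu)$, so no element of $\SW\setminus\SAdm(\mu)$ can occur.
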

\begin{proof}
We first prove that (i) is equivalent to $\text{(i)}'$ $w_{k,l}\geq_{S,\sigma}w_{k',l'}$.
We clearly have $\text{(i)}\Rightarrow \text{(i)}'$.
It follows from $w_{k',l'}\in \SW$ and \cite[Proposition 3.8]{He07a} that if $w_{k,l}\geq_{S,\sigma}w_{k',l'}$, then $\supp_\sigma(w_{k,l})\supseteq \supp_\sigma(w_{k',l'})$.
Thus $\text{(i)}'$ combined with the explicit description of $\supp_\sigma(w_{k,l})$ implies that $l\geq l'$, and that if $k=1$, then $k'=1$.
Then the equivalence of (i) and $\text{(i)}'$ follows from Lemma \ref{closure relation lemm}.

Let $\tilde j,\tilde j'\in \J_\tau$.
Similarly as the proof of \cite[Theorem 7.2.1]{GH15} (see also \cite[Theorem 3.1(2)]{He09}), we can prove that $\tilde j\overline{\pi(Y(w_{k,l}))}$ is a union of the strata in $X_\mu(\tau)$, and that it contains a stratum $\tilde j'\pi(Y(w_{k',l'}))$ if and only if $\text{(i)}'$ $w_{k,l}\geq_{S,\sigma}w_{k',l'}$ and $\text{(ii)}'$ $\tilde j(\J_\tau\cap P_{\supp_\sigma(w_{k,l})\cup S(w_{k,l},\sigma)})\cap \tilde j'(\J_\tau\cap P_{\supp_\sigma(w_{k',l'})\cup S(w_{k',l'},\sigma)})\neq \emptyset$.
Clearly, $\text{(ii)}'$ is equivalent to (ii), which finishes the proof.
\end{proof}

\begin{rema}
The equivalence of $\text{(i)}$ and $\text{(i)}'$ in the proof of Corollary \ref{closure relation} proves a part of \cite[Conjecture 3.21]{ABFGGN24}.
\end{rema}

\subsection{Irreducible Components}
In this subsection, we compare Theorem \ref{main theo} and the results in \cite{FI21} and \cite{FHI23}.
Let us denote by $\Irr X_\mu(b)$ the set of irreducible components of $X_\mu(b)$.
Note that $X_\mu(b)$ is known to be equidimensional (cf.\ \cite{HV12}, \cite{Takaya22}).
The following result is consistent with \cite[Proposition 8.1]{FI21}.
\begin{theo}
\label{irreducible components theo}
We have $\dim X_\mu(b)=n-2$ and $|G(F)\backslash \Irr X_\mu(b)|=\lfloor\frac{n}{2}\rfloor$.
If $n$ is odd, then
$$\Irr X_\mu(b)=\bigsqcup_{j\in G(F)/G(\cO_F)}(\{j\tau_1\overline{\pi(Y(w_{1,n}))}\}\sqcup \{j\tau_1\overline{\pi(Y(w_{k,n-1}))}\mid 3\le k\le \tfrac{n+1}{2}\}).$$
If $n$ is even, then
\begin{align*}
\Irr X_\mu(b)=&\bigsqcup_{j\in G(F)/G(\cO_F)}(\{j\tau_1\overline{\pi(Y(w_{1,n}))}\}\sqcup \{j\tau_1\overline{\pi(Y(w_{k,n-1}))}\mid 3\le k\le \tfrac{n+1}{2}\})\\
&\sqcup\bigsqcup_{j\in G(F)/G(F)\cap P}\{j\tau_1\overline{\pi(Y(w_{\frac{n}{2},\frac{n}{2}+1}))}\},
\end{align*}
where $P=P_{\tS\setminus\{s_{\frac{n}{2}}\}}$.
\end{theo}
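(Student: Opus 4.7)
The plan is to compute the dimension of each non-empty Ekedahl-Oort stratum using Theorem \ref{main theo}, identify which are top-dimensional, and then invoke the known equidimensionality of $X_\mu(b)$ (cited from \cite{HV12, Takaya22}) to conclude that $\Irr X_\mu(b)$ consists precisely of the closures of the irreducible components of those top-dimensional strata, each of which is already listed explicitly by Theorem \ref{main theo}.

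First, I compute dimensions case by case. For $w_{k,l}\in\SAdm(\mu)_{\DL}$, Proposition \ref{spherical} and Remark \ref{spherical rema} give $\dim\pi(X_{w_{k,l}}(b))=\ell(w_{k,l})=k+l-3$. For $w_{k,l}\in\SAdm(\mu)_{\neq\DL}$ in case (i) of Theorem \ref{non-empty}, Theorem \ref{main theo} exhibits $\pi(X_{w_{k,l}}(b))$ as an iterated fibration of rank $(k-1)/2$ over $\pi(X_{w_{1,l}}(b))$, which has dimension $\ell(w_{1,l})=l-2$, yielding total dimension $(k-1)/2+(l-2)$. In case (ii) the fibration has rank $k+(l-n-3)/2$ over $\pi(X_{w_{1,n-k+2}}(b))$, giving total dimension $(l+n-3)/2$. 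Maximizing each of these under the constraints of Theorem \ref{non-empty}: within $\SAdm(\mu)_{\DL}$, the equation $k+l-3=n-2$ combined with $k=1$ or $l\le(n+2)/2$ and $k<l$ forces $(k,l)=(1,n)$, with the additional solution $(n/2,n/2+1)$ exactly when $n$ is even; case (i) forces $k=3$, $l=n-1$; case (ii) forces $l=n-1$ with $4\le k\le\lfloor(n+1)/2\rfloor$. Merging these, the top-dimensional strata are $w_{1,n}$, $w_{k,n-1}$ for $3\le k\le\lfloor(n+1)/2\rfloor$, together with $w_{n/2,n/2+1}$ when $n$ is even, all of dimension $n-2$. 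Hence $\dim X_\mu(b)=n-2$.

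By equidimensionality, the irreducible components of $X_\mu(b)$ are exactly the closures of the irreducible components of the top-dimensional Ekedahl-Oort strata. Theorem \ref{main theo} decomposes each $\pi(X_{w_{k,l}}(b))$ as $\bigsqcup_{j\in G(F)/(G(F)\cap P_{k,l})} j\tau_1\pi_{S(w_{k,l},\sigma)}(Y(w_{k,l}))$, each summand being irreducible of dimension $n-2$. From Theorem \ref{main theo} (for the $\neq\DL$ case) and from Lemma \ref{SwDL} (for $w_{1,n}$, noting $\supp_\sigma(w_{1,n})_1\cup S(w_{1,n},\sigma)_1=S$), one sees that $P_{k,l}=G(\cO)$ for $w_{1,n}$ and for each $w_{k,n-1}$ with $k\ge 3$. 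For the even-$n$ stratum, Lemma \ref{SwDL} yields $\supp_\sigma(w_{n/2,n/2+1})_1\cup S(w_{n/2,n/2+1},\sigma)_1=\tS\setminus\{s_{n/2}\}$, hence $P_{k,l}=P$. Since Ekedahl-Oort strata are pairwise disjoint and each top-dimensional stratum contains the generic points of its irreducible components, closures indexed by distinct pairs $(j,w_{k,l})$ give distinct irreducible components; counting $G(F)$-orbits recovers the explicit parametrization and the count $|G(F)\backslash\Irr X_\mu(b)|=\lfloor n/2\rfloor$.

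The main obstacle is the combinatorial case analysis ensuring that the dimension $n-2$ is attained exactly at the enumerated $w_{k,l}$ and nowhere else, especially handling the boundary $l=(n+2)/2$ (relevant only when $n$ is even) and checking that in case (ii) the parity condition $l\equiv n-1$ is automatic once $l=n-1$, imposing no further restriction on $k$. The remainder is a direct consequence of Theorem \ref{main theo} combined with the cited equidimensionality.
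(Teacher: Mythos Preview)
Your proposal is correct and follows essentially the same approach as the paper's proof. The paper is terser---it simply asserts $\dim X_\mu(b)=\dim\pi(Y(w_{1,n}))=\dim\pi(Y(w_{k,n-1}))=n-2$ and cites the disjointness of the decomposition from Theorem~\ref{main theo}---whereas you explicitly compute the dimension of every non-empty stratum from Theorem~\ref{main theo} and Theorem~\ref{non-empty} and carry out the maximization to identify the top-dimensional ones, which is exactly the verification the paper leaves implicit.
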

\begin{proof}
By Theorem \ref{main theo}, we have $\dim X_\mu(b)=\dim\pi(Y(w_{1,n}))=\dim \pi(Y(w_{k,n-1}))=n-2$ for $3\le k\le \frac{n+1}{2}$.
We also have $\dim \pi(Y(w_{\frac{n}{2},\frac{n}{2}+1}))=n-2$ if $n$ is even.
Again by Theorem \ref{main theo} and Remark \ref{spherical rema}, $\pi(Y(w_{1,n}))$ and $ \pi(Y(w_{k,n-1}))$ for $3\le k\le \frac{n+1}{2}$ are irreducible.
The same is true for $\pi(Y(w_{\frac{n}{2},\frac{n}{2}+1}))$ if $n$ is even.
So the $G(F)$-orbits of the closure of these varieties are irreducible components of $X_\mu(b)$.
Since the decomposition of $X_\mu(b)$ into locally closed subvarieties by Theorem \ref{main theo} is a disjoint union, we obtain the theorem.
\end{proof}

Unlike the case $w_{k,l}\in \SAdm(\mu)_{\DL}$ in Corollary \ref{closure relation}, the closure of $j\tau_1\pi(Y(w_{k,l}))$ is hard to describe explicitly if $w_{k,l}\in \SAdm(\mu)_{\neq\DL}$.
This is already true for the case $n=5$, which we discuss in the following example.
\begin{exam}
\label{counterexample}
Assume that $n=5$.
We consider two irreducible components $\tau_1\overline{\pi(Y(w_{3,4}))}$ and $s_0\tau_1\overline{\pi(Y(w_{1,5}))}$ lying in different $G(F)$-orbits.
We have $w_{1,5}=s_0s_1s_2\tau\xrightarrow{s_3s_0s_1}s_1\tau$.
By \cite[Corollary 2.5]{Lusztig76} (see also \cite[Proposition 1.1]{Gortz19}), $Y(s_1\tau)\subset Is_1s_2s_1I/I$.
Since $s_1s_2s_1s_3s_0s_1$ is a reduced expression, we have $$s_0\tau_1(X_{w_{1,5}}(\tau) \cap Is_1s_2s_1s_3s_0s_1I/I)=s_0\tau_1 X_{w_{1,5}}(\tau) \cap Is_0s_2s_3s_2s_4s_1s_2\tau_1I/I\neq \emptyset$$ by Proposition \ref{DL method prop}.
In particular,
$$s_0\tau_1\pi(X_{w_{1,5}}(\tau))\cap I\vp^{((1,0,0,1,-1),0)}K/K\neq \emptyset.$$
On the other hand, the irreducible components of $X_\mu(b)$ corresponding to $G(\cO_F)$ are the closure of $X_\mu(b)\cap K\vp^{((1,0,0,0,0),0)}K/K$ and $X_\mu(b)\cap K\vp^{((1,1,0,0,-1),0)}K/K$ by the proof of \cite[Theorem 6.2.2]{FHI23} (note that their $\mu$ is the dual of our $\mu$).
Since $\tau_1\pi(Y(w_{1,5}))\subset K\vp^{((1,0,0,0,0),0)}K/K$, $\tau_1\overline{\pi(Y(w_{3,4}))}$ must be equal to the closure of $X_\mu(b)\cap K\vp^{((1,1,0,0,-1),0)}K/K$.
Thus $\tau_1\overline{\pi(Y(w_{3,4}))}\cap s_0\tau_1\pi(Y(w_{1,5}))\neq \emptyset$.
Note that $s_0\tau_1\pi(Y(w_{1,5}))\nsubseteq \tau_1\overline{\pi(Y(w_{3,4}))}$.
Therefore neither $\tau_1\overline{\pi(Y(w_{3,4}))}$ nor $\tau_1\overline{\pi(Y(w_{3,4}))}\cap s_0\tau_1\overline{\pi(Y(w_{1,5}))}$ can be written as a union of the strata in Theorem \ref{main theo}.
\end{exam}

It would be interesting to study in more detail the relationship between the description of irreducible components of $X_\mu(b)$ in this paper and the descriptions in \cite{FI21} and \cite{FHI23} by Fox, Howard and Imai.
In \cite{FI21} and \cite{FHI23}, the authors exploit the explicit construction of irreducible components given by Xiao-Zhu \cite{XZ17}.
This construction seems to have a connection with the $\J$-stratification of affine Deligne-Lusztig varieties introduced in \cite{CV18} by Chen-Viehmann (see also the comment at the end of \cite[\S1]{CV18} by the authors).
On the other hand, the stratification obtained in this paper is a refinement of the Ekedahl-Oort stratification of $X_\mu(b)$.
From this point of view, the comparison between this paper and the work by Fox-Howard-Imai should be a special case of the comparison of the Ekedahl-Oort stratification and the $\J$-stratification, both of which exist in general.
See \cite{Shimada4} for the comparison of these two stratifications in the superbasic case.

\bibliographystyle{myamsplain}
\bibliography{reference}
\end{document}